\newcommand\redout{\bgroup\markoverwith
{\textcolor{red}{\rule[.5ex]{4pt}{1.4pt}}}\ULon}
\newtheorem{theorem}{Theorem}
\newtheorem{corollary}{Corollary}
\newtheorem{lemma}{Lemma}
\newtheorem{proposition}{Proposition}
\def\Var{\text{Var}}
\def\Cov{\text{Cov}}
\def\WZ{\mathcal{Z}}
\def\WG{\mathcal{G}}
\newcommand{\WZtime}[1]{\mathcal{Z}_{#1}}
\newcommand{\WGtime}[1]{\mathcal{G}_{#1}}
\newlength{\noteWidth}
\long\def\notes#1{\ifinner
             {\tiny #1}
             \else
             \marginpar{\parbox[t]{\noteWidth}{\raggedright\tiny #1}}
             \fi}
\newcounter{rmnum}
\def\limsup{\mathop{\rm lim\ sup}}
\def\liminf{\mathop{\rm lim\ inf}}
\def\E{ {\mathbb{E} }}
\def\P{ {\mathbb{P} }}
\def\Zp{{Z^+}}
\def\Var{\text{Var}}
\def\Cov{\text{Cov}}
\def\EN{h}
\def\ENtwo{v}
\def\FigR#1 #2 {Fig.~\ref{#1}-\ref{#2}}
\begin{document}

\title{Sample Path Properties of the Average Generation of a Bellman-Harris Process}
\author{
Gianfelice Meli\thanks{Hamilton Institute, Maynooth University, Ireland}, 
Tom S. Weber\thanks{The Walter and Eliza Hall Institute of Medical Research \& The
University of Melbourne, Parkville, Australia},
and
Ken R. Duffy\thanks{Hamilton Institute, Maynooth University, Ireland.
E-mail: Corresponding ken.duffy@nuim.ie}
}

\date{}

\maketitle

\begin{abstract}
Motivated by a recently proposed design for a DNA coded randomised
algorithm that enables inference of the average generation of a collection
of cells descendent from a common progenitor, here we establish
strong convergence properties for the average generation of a
super-critical Bellman-Harris process. We further extend those
results to a two-type Bellman-Harris process where one type can
give rise to the other, but not vice versa. These results further
affirm the estimation method's potential utility by establishing
its long run accuracy on individual sample-paths, and significantly
expanding its remit to encompass cellular development that gives rise 
to differentiated offspring with distinct population dynamics.
\end{abstract}

\section{Introduction}

Consider a collection of cells subject to proliferation, differentiation
and death. Define the generation of each descendent to be the number
of divisions that led to that cell. Generation dependent behaviour
has been implicated in the risk of cancer and its evolution
\cite{Frank2003Patterns,Merlo2006Cancer,tomasetti15}, as well as
being a determiner in the complex differentiation dynamics of
proliferating cell systems
\cite{hodgkin1996b,Tangye03,turner08,hills2009,Duffy12,zhang2013,deboer2013,marchingo2014}.
If a cell population expands asynchronously or is subject to death
as well as division, then the average generation of a collection
of cells cannot be inferred solely from knowledge of cell numbers,
Fig. \ref{fig:1a}, and additional information is needed to determine
this quantity Fig. \ref{fig:1b}.

A range of experimental techniques have been developed that allow
evaluation or estimation of the generations of cells. 
Entire lineages can be recorded {\it in vitro} via time lapse
microscopy so long as cells can be continuously tracked,
e.g.
\cite{Powell55,Smith73,Sulston83,Hawkins09,Gomes11,Giurumescu2012Quantitative,richards2013}.
An alternate methodology is to stain initial cells with a fluorescent
dye \cite{lyons94,lyons00,hawkins07,quah12} such that with each
division cells inherit approximately half of the molecules from
their parent and thus fluoresce with half their intensity. A cell's
generation can thus be inferred from its luminous intensity via
flow cytometry. This high throughput approach is suitable for adherent
cells that cannot be tracked optically, and can be used {\it in vivo}
 adoptive transfer experiments. In most applications division tracking
dyes are used to determine the
distribution of a population across generations, but recent
developments have created an experiment design where the offspring
of individual clones can be identified via colour multiplexes of
distinct division diluting dyes \cite{marchingo2016,horton18}. Genetically
modified mice also exist that enable an inducible equivalent of
a division diluting dye {\it
in vivo} without the need for adoptive transfer of {\it ex-vivo}
stained cells, e.g. \cite{tumbar2004defining,foudi2009analysis,mascre2012distinct}.  These methods
enable 6-10 generations to be followed before fluorescent
signal-to-noise ratio is too low for a cell's generation to be
reliably determined.

Methods to estimate replicative tree depth {\it in vivo} have been proposed
that involve measurement of average telomere length
\cite{harley1990,allsopp1992,vaziri1994,weinrich1997,rufer1999,hills2009}
or the number of somatic mutations introduced during
DNA duplication
\cite{shibata96,tsao00,shibata06,Wasserstrom08,reizel11,carlson2012}.
Methods in this direction rely on inference rather than direct
determination, but they offer the possibility of tracing more than
10 generations {\it in vivo}.

We recently proposed a new design for {\it in vivo} inference of
average generation that relies on a DNA coded randomised algorithm
\cite{weber2016inferring}.  For illustration, consider a single
initial cell at time $t=0$. As in Figs.~\ref{fig:1a} and~\ref{fig:1b}, let $Z(t)$ be
the number of offspring alive at time $t$ and $G(t)$ be the sum of
the generations of all living cells at that time. The proposal to
infer $G(t)/Z(t)$ in \cite{weber2016inferring} is to equip the
initial cell with a neutral label, i.e. one whose presence or absence
has no ramifications for population dynamics, such that during each
cell's lifetime with a small probability $p$ the label is irrevocably
and heritably lost. With $\Zp(t)$ denoting the number of label positive cells at
time $t$, as in Fig.  \ref{fig:1c}, the suggested estimator is
\begin{align}
\frac{G(t)}{Z(t)}
\approx -\frac{1}{p}
\log\left(\frac{\Zp(t)}{Z(t)}\right), \text{ assuming } p \text{ is small}.
\label{eq:theformula}
\end{align}
This surprising formula is desirable for a number of reasons: 1)
it allows for cell death; 2) it does not require knowledge of cell
cycle times; and 3) for inference it requires only a proportional
measurement rather than absolute numbers. Moreover, to infer the
relative developmental depth of two populations equipped with the
system, one does not need to know $p$, the probability of label
loss per cell lifetime, if it is the same for both. A DNA coded
randomised algorithm, based on the existing FUCCI cell cycle reporter
\cite{sakaue2008visualizing}, to realise the design is proposed in
\cite{weber2016inferring}.

Two distinct derivations of the approximation \eqref{eq:theformula}
are provided in \cite{weber2016inferring}. One, based on properties
of cumulant generating functions, establishes that for an arbitrary lineage
relationship between the cells constituting $Z(t)$, the expected
number of label-positive cells, $\E(\Zp(t))$, over all possible
delabellings recovers the correct value as the probability of label
loss goes to zero:
\begin{align*}
\frac{G(t)}{Z(t)}
=\lim_{p\to0} -\frac{1}{p}
\log\left(\frac{\E(\Zp(t))}{Z(t)}\right).
\end{align*}
For a single realisation of the delabelling process, as would occur
experimentally, this provides no assurance. To establish
such a result, some structure is needed on the family tree.
Consequently, a complementary result is also established in
\cite{weber2016inferring} within the context of the standard model
of an asynchronously developing tree, the Bellman-Harris branching process.
That is, a growing tree model where cells have i.i.d. lifetimes and
independent i.i.d. numbers of offspring numbers at the end of their
lives. With $Z(t)$ being number of cells alive at time $t$ in a
super-critical 
Bellman-Harris branching process, so long as the label-positive
sub-population $\Zp(t)$ is super-critical,
it is established in \cite{weber2016inferring} that
\begin{align} \label{eq:Ken_formula}
\lim_{t\to\infty}
\frac{\E(G(t))}{t\E(Z(t))} 
	=\lim_{p\to0}\lim_{t\to\infty}
	-\frac{1}{pt} \log\left(\frac{\Zp(t)}{Z(t)}\right), 
	\text{ almost surely if } \liminf_{t\to\infty} \Zp(t)>0.  
\end{align} 
The right hand side of this equation says that as long as the
label-positive sub-population continues to exist, ultimately the estimate
of average generation converges on each single path of the process.
The left hand side, however, is not entirely satisfactory. It is
an average quantity over realisations of the branching process and
it forms the ratio of expectations, $\E(G(t))/\E(Z(t))$, rather
than the expectation of the ratio $\E(G(t)/Z(t)))$.

In the present paper we make two mathematical advances
that further enhance the promise of the proposed method. We first
rectify this shortcoming by proving a substantially stronger result:
that for a Bellman-Harris branching process the sample-path average
generation divided by time converges almost surely to a constant,
giving
\begin{align} \label{eq:main_result}
\lim_{t\to\infty}
\frac{G(t)}{tZ(t)} =\lim_{p\to0}\lim_{t\to\infty} -\frac{1}{pt}
\log\left(\frac{\Zp(t)}{Z(t)}\right), \text{ almost surely if }
\liminf_{t\to\infty} \Zp(t)>0.  
\end{align} 
The convergence result on the left hand side greatly strengthens
the only previous result we are aware of, that proved in \cite{Samuels71}
where convergence in probability of average generation is established
for processes in which there is no death. Given the ubiquity of
Bellman-Harris processes, it is likely to be of interest for other
reasons, but for our purposes it is most significant in providing
extra support for merits of the proposed average generation inference
methodology.

In order to establish this fact we prove a collection of 
surprising results for the paired processes $(Z(t),G(t))$ of a
super-critical Bellman-Harris process. In particular, with $L$
being a lifetime distribution, $\EN>1$ being the average number of
offspring of a cell at the end of its life and $\alpha$ being the
Malthusian parameter, i.e. the solution to
\begin{align}
\label{eq:malthus}
\EN \E(e^{-\alpha L})=1,
\end{align}
then 
\begin{align}
\label{eq:BHW}
\lim_{t\to\infty} \left(e^{-\alpha t} Z(t), t^{-1} e^{-\alpha t} G(t)\right) 
= (c_1\WZ,c_2\WZ),
\end{align}
where $\WZ$ is a random variable and $c_1,c_2$ are constants. Namely, even
though the total generation advances at a different rate to the
population size, the random element of the prefactor is the
same for both, and properties of the ratio $G(t)/Z(t)$ follow.

To establish those results we use a combination of both old and
novel arguments, essentially following the methodology described
by Harris~\cite{harris1963theory}, but relying on a peculiar renewal
theorem for defective measures inspired by results of
Asmussen~\cite{asmussen1998probabilistic}. That allows us to obtain
an integral formulation for the probability generating functions
of the prefactors described above. To clinch the result, we
essentially insert the guess that the randomness in the prefactors
of the two processes is the same.

\begin{figure}
\begin{center}
\subfigure[\label{fig:1a}]{
\includegraphics[height=6cm]{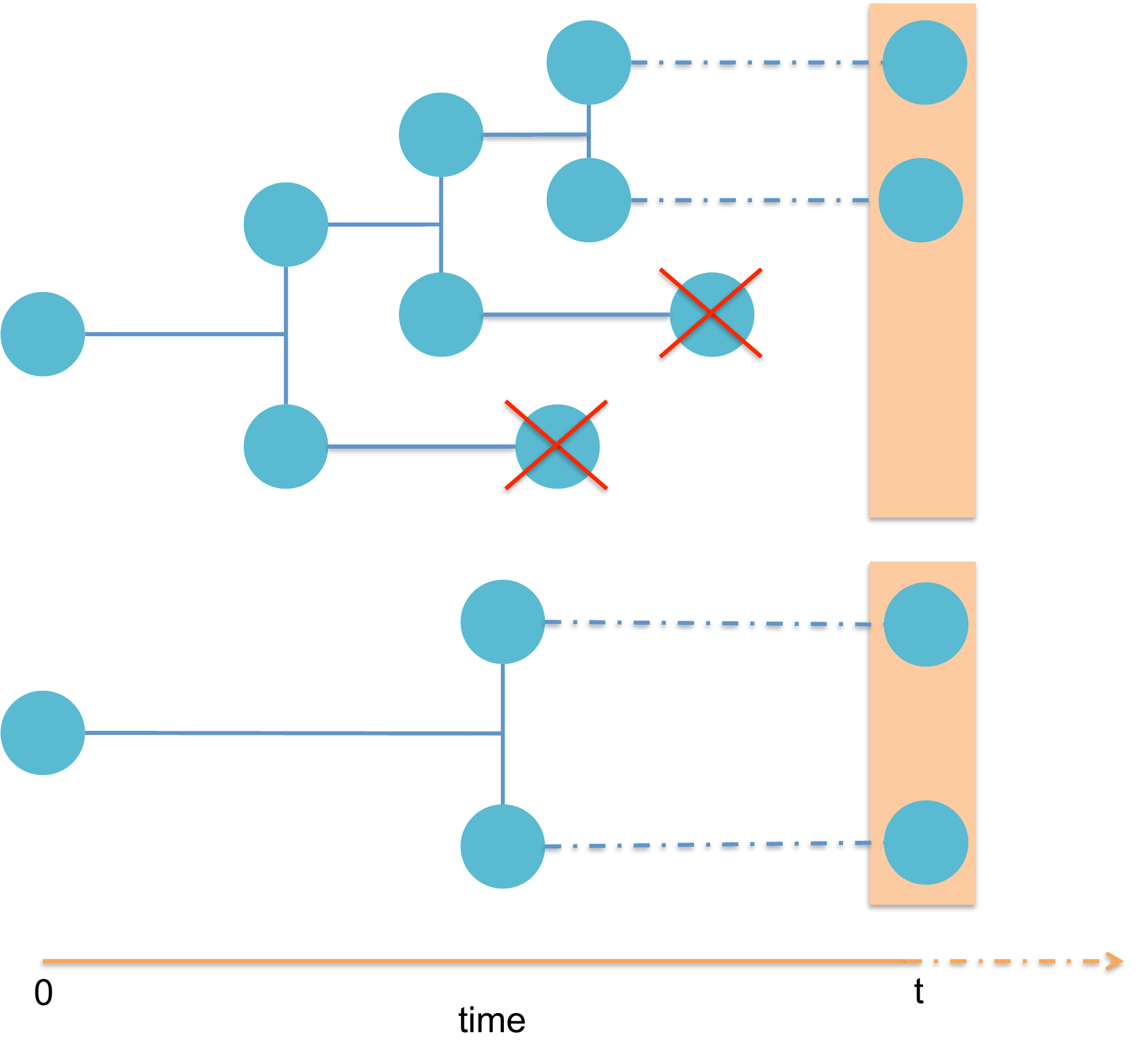}
}\\
\subfigure[\label{fig:1b}]{
\includegraphics[height=6cm]{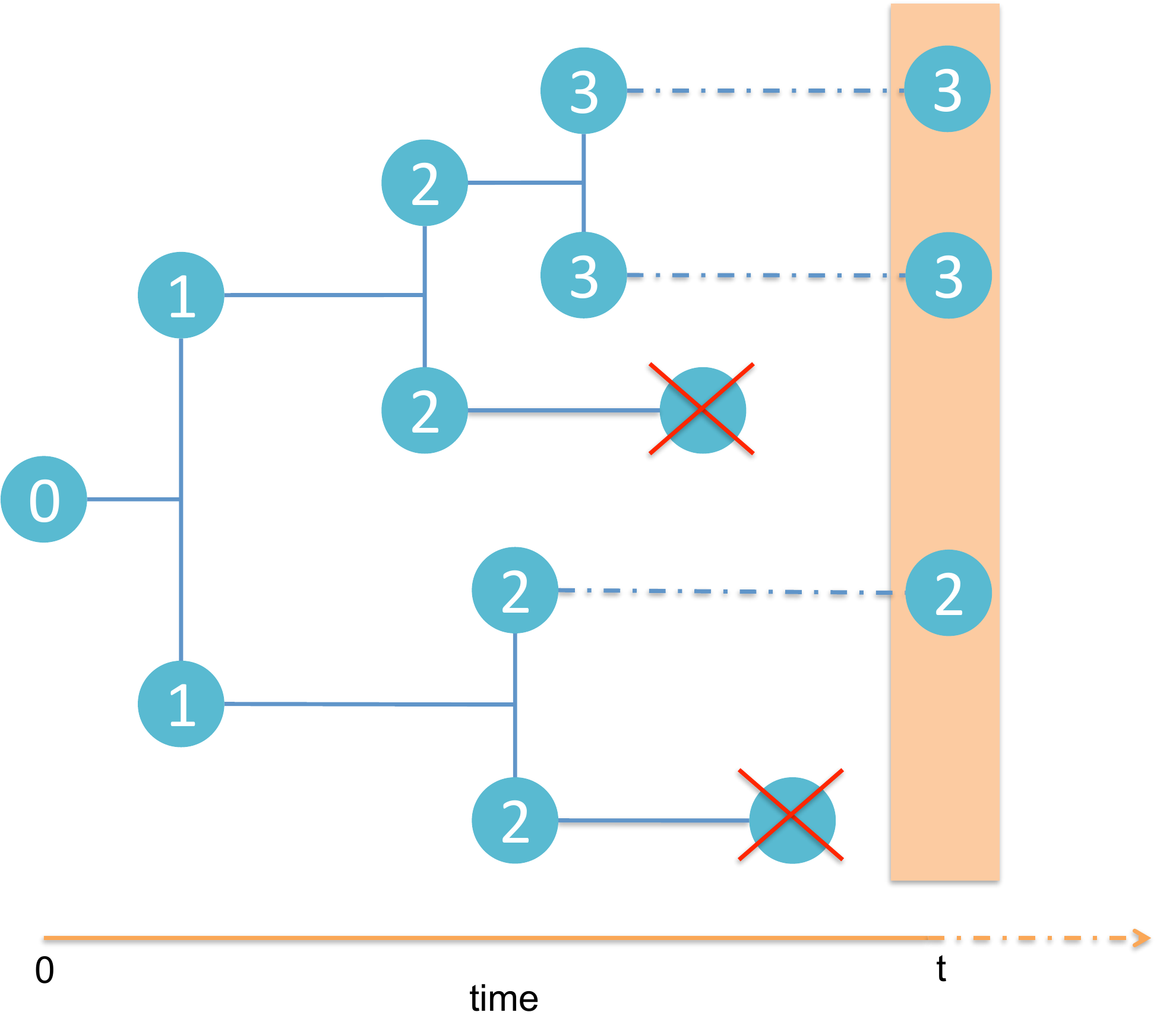}
}
\subfigure[\label{fig:1c}]{
\includegraphics[height=6cm]{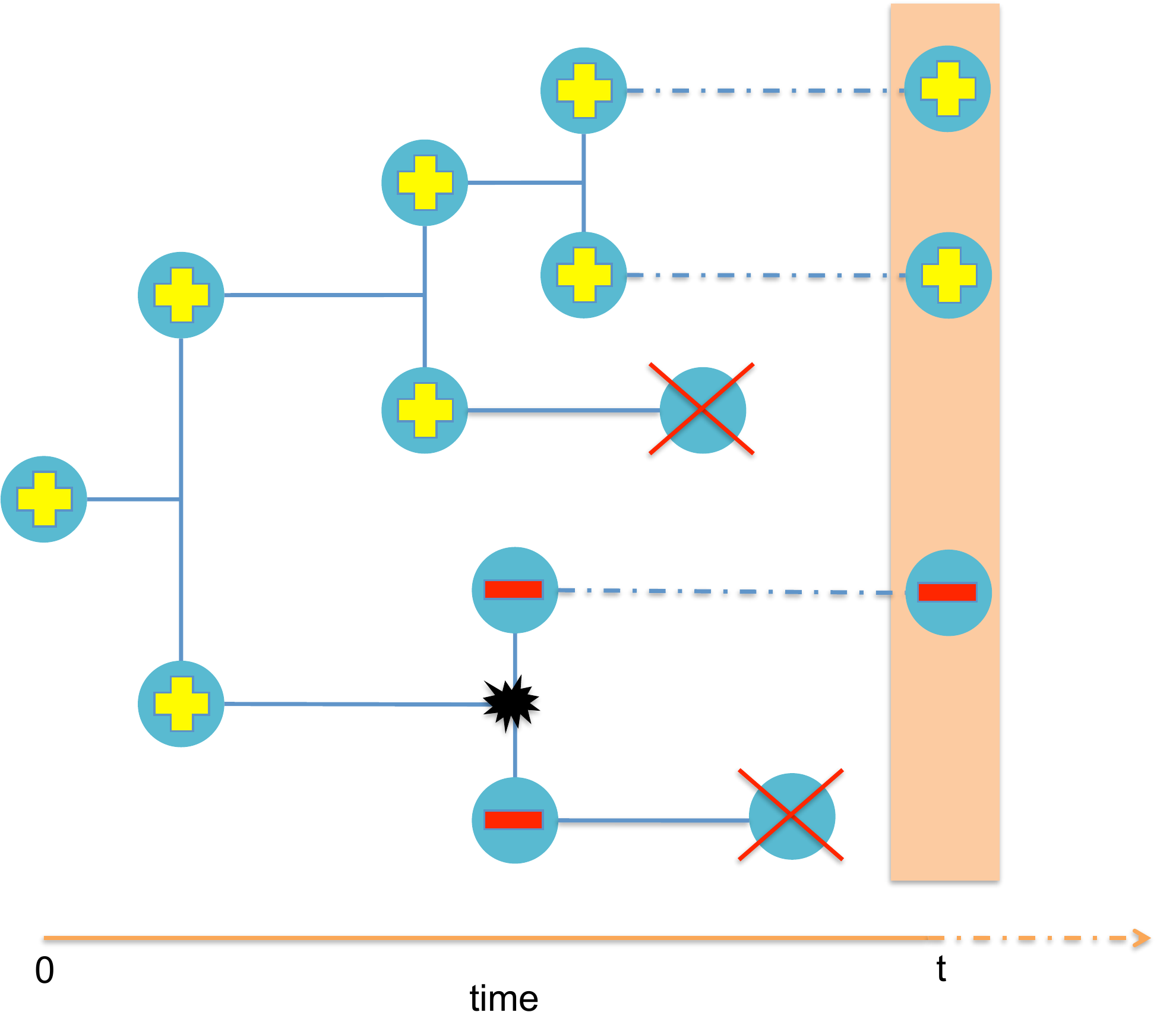}
}
\end{center}
\caption{{\bf Average generation.} (a) If a population of cells
grows asynchronous or is subject to death, knowledge of the number of
cells alive at a single time (orange box, time $t$, $Z(t)=2$) does not uniquely determine
the average number of divisions that lead to to the living cells
(i.e. the depth of the family tree).  (b) With the progenitor being
defined to be in generation 0, the total generation of the process
at any time is the sum of the generations, the number of edges back
to the root of the tree, of living cells (orange box, $G(t) =
3+3+2=8$) and the average generation is the total generation
divided by the number of living cells, $G(t)/Z(t) = 8/3$.
(c) The randomised algorithm proposed in \cite{weber2016inferring}
for inferring $G(t)/Z(t)$ is based on having a neutral label in the
initial cell that is independently lost with probability $p$ during
each cell's lifetime (indicated by the black cloud) and is not regained
by further offspring once lost. If the proportion of label-positive
cells can be measured and the probability of label loss, $p$, is
small, then the following relationship holds $G(t)/Z(t)\approx -1/p
\log(\Zp(t)/Z(t))$ in two approximate senses more fully explained in
the main text.} \label{fig:1} \end{figure}

The second contribution of the present paper is to provide mathematical
support that significantly extends the remit of the average generation
estimation scheme by considering a two-type super-critical
Bellman-Harris process with one-way differentiation, where cells
of the first type can differentiate into cells of the second, but
not vice versa (e.g.  Fig.~\ref{fig:cell_population}).  Assuming,
as before, a cell of the first type is equipped with a neutral label
that is heritably lost at each division with a given probability,
we establish that a relationship akin to that given in
\eqref{eq:main_result} holds for both cell types irrespective of
the ordering of their Malthus exponents. Namely, if one starts with
a single cell of one type that can differentiate and generate a
second type, one can ultimately drawn inferences about the average
generation of each cell type. This encompasses, for example, scenarios
where healthy cells may give rise to quickly growing cancer cells
or quickly expanding multipotent progenitors give rise to slowly
dividing terminally differentiated cells.

\begin{figure}
\begin{center}
\includegraphics[scale=0.45]{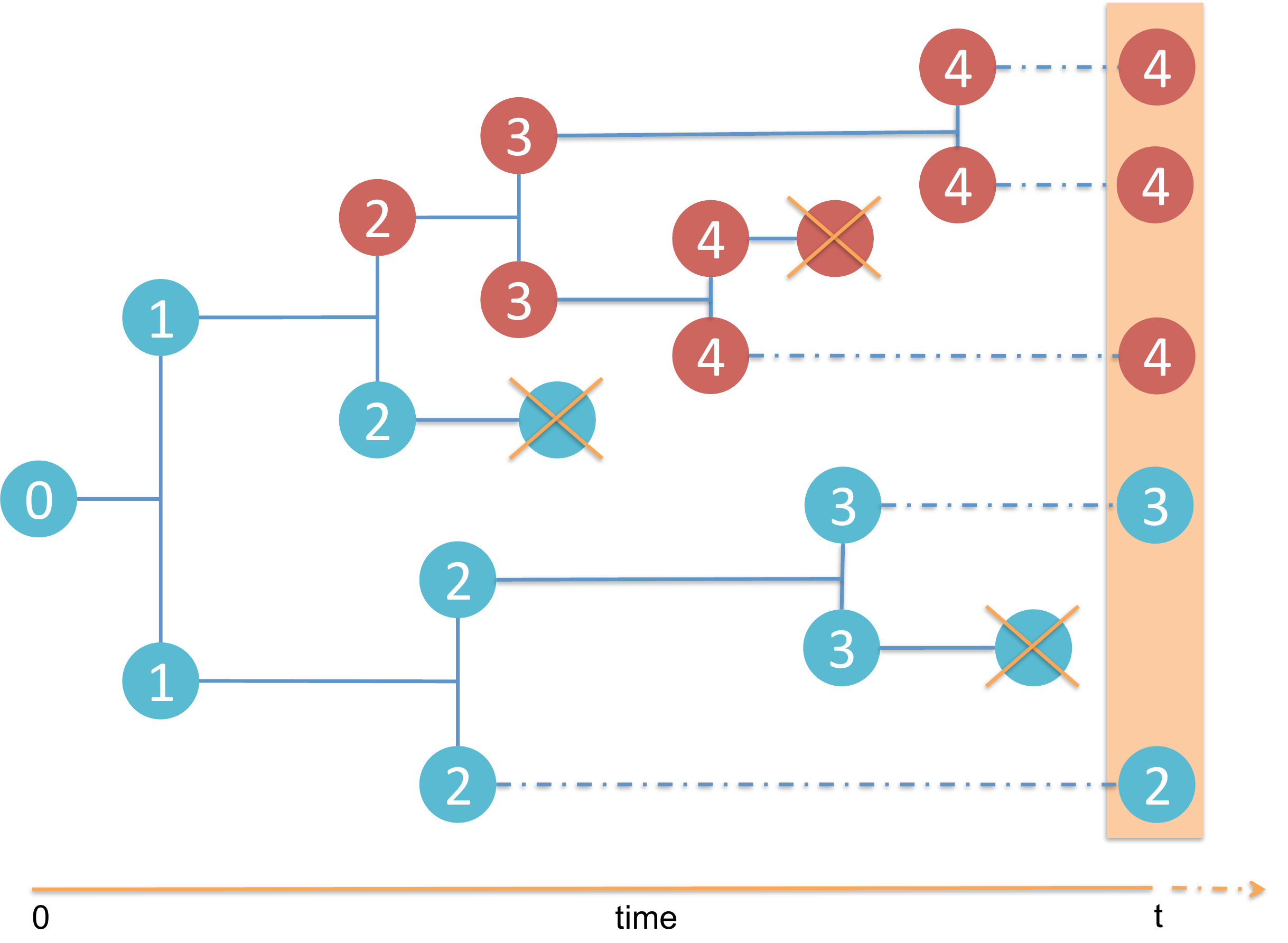}
\end{center}
\caption{\textbf{Two-type process.} In addition to division and
death, a cell may differentiate into another type (indicated here
by a change in colour) with distinct proliferation properties. 
For many scientific questions, one is interested in the average
generation of cells of each type.  The figure describes the growth
of a population that starts with one cell of type-1 at time $0$
and, after consecutive divisions, consists of 5 cells at time
$t$. The average generation of cells of type-1, the blue cells, is
$(3+2)/2=2.5$, while for cells of type-2, the red cells, it is $4$.}
\label{fig:cell_population} 
\end{figure}

\section{Motivation for the main mathematical result}

A time-dependent model of a family tree is necessary to investigate the
temporal dyamics of average generation. Analysis is trivial in the simplest
such stochastic model, the Galton-Watson branching process
\cite{watson1875probability,harris1963theory,Kimmel02}. It assumes
that all cells of a given generation share a common lifetime at the
end of which they produce i.i.d. numbers of offspring for the next
generation. If $t_n$ is the time of birth of the $n^\text{th}$
generation, then the total generation is
simply $G(t_n)=n Z(t_n)$.
Consequently, the well known result for the limit behaviour of
$Z(t_n)$ as $n$ becomes large in the super-critical case~\cite[Chapter
1]{harris1963theory} also describes the prefactor on front of the
distribution of $G(t_n)$,
\begin{align}\label{eq:GW_Z_and_G}
    \lim_{n \to \infty} \frac{Z(t_n)}{h^{n}}= \WZ   \implies 
    \lim_{n \to \infty} \frac{G(t_n)}{nh^{n}}= \WZ 
\end{align}
where $\EN>1$ is the average number of offspring, $\WZ$ is a
non-negative random variable such that $\E(\WZ)=1$, and the equalities in~\eqref{eq:GW_Z_and_G} are meant in distribution. 

On relaxing the constraint that all lifetimes are equal, however,
there seems to be little {\it a priori} reason why the analogous
quantity to $\WZ$ in \eqref{eq:GW_Z_and_G}, which is $\WZ$ in \eqref{eq:BHW},
should be shared by both $Z(t)$ and $G(t)$. Moving away from
synchronicity, if the lifetimes of cells are i.i.d.  positive and
non-lattice random variables, the development forms a
Bellman-Harris branching process~\cite{harris1963theory,Kimmel02}.
In that setting, cells are spread across generations and the ratio
$G(t)/Z(t)$ is no longer deterministic. As $\E(G(t))/(t \E(Z(t)))$
converges to a constant~\cite{weber2016inferring}, it is reasonable
to suspect that the average generation will still grow linearly in
time. That possibility is also suggested by
Fig.~\ref{fig:comparison_Z_and_G}, where, for independent simulations
of a super-critical Bellman-Harris process with Malthusian parameter
$\alpha$ defined in \eqref{eq:malthus}, $Z(t)e^{-\alpha t}$ and
$G(t)e^{-\alpha t}$ are plotted, illustrating the factor $t$ in the
ratio between them.

Collating observations across multiple simulations, however,
Fig.~\ref{fig:comparison_W_Wtilde} suggests something analogous to
\eqref{eq:GW_Z_and_G} is taking place.
Fig.~\ref{fig:comparison_W_Wtilde}(a) plots the empirical cumulative
distribution function of the renormalised total cell numbers and
total generation at a large time, suggesting equality in distribution.
Fig.~\ref{fig:comparison_W_Wtilde}(b) displays a scatter plot of
the per-simulation prefactors of those quantities for large $t$.
There is a strong positive correlation in these values, hinting at
their relatedness. Finally Fig.~\ref{fig:comparison_W_Wtilde}(c)
shows sample paths of the the difference between the renormalised
total cell numbers less renormalised total generation, which appears
to be converging to zero.  This further suggests convergence in
probability of the sample-path average generation of a Bellman-Harris
process, conditional on survival.  Thus, even though $G(t)/Z(t)$
is not longer deterministic, the randomness in $G(t)/Z(t)$ does not
reside in the linear term, but in something smaller, which is one
result that formally established in this paper.

\begin{figure}
\centering
\subfigure[\label{fig:comparison_Z_and_G_a}]{
\includegraphics[scale=0.4]{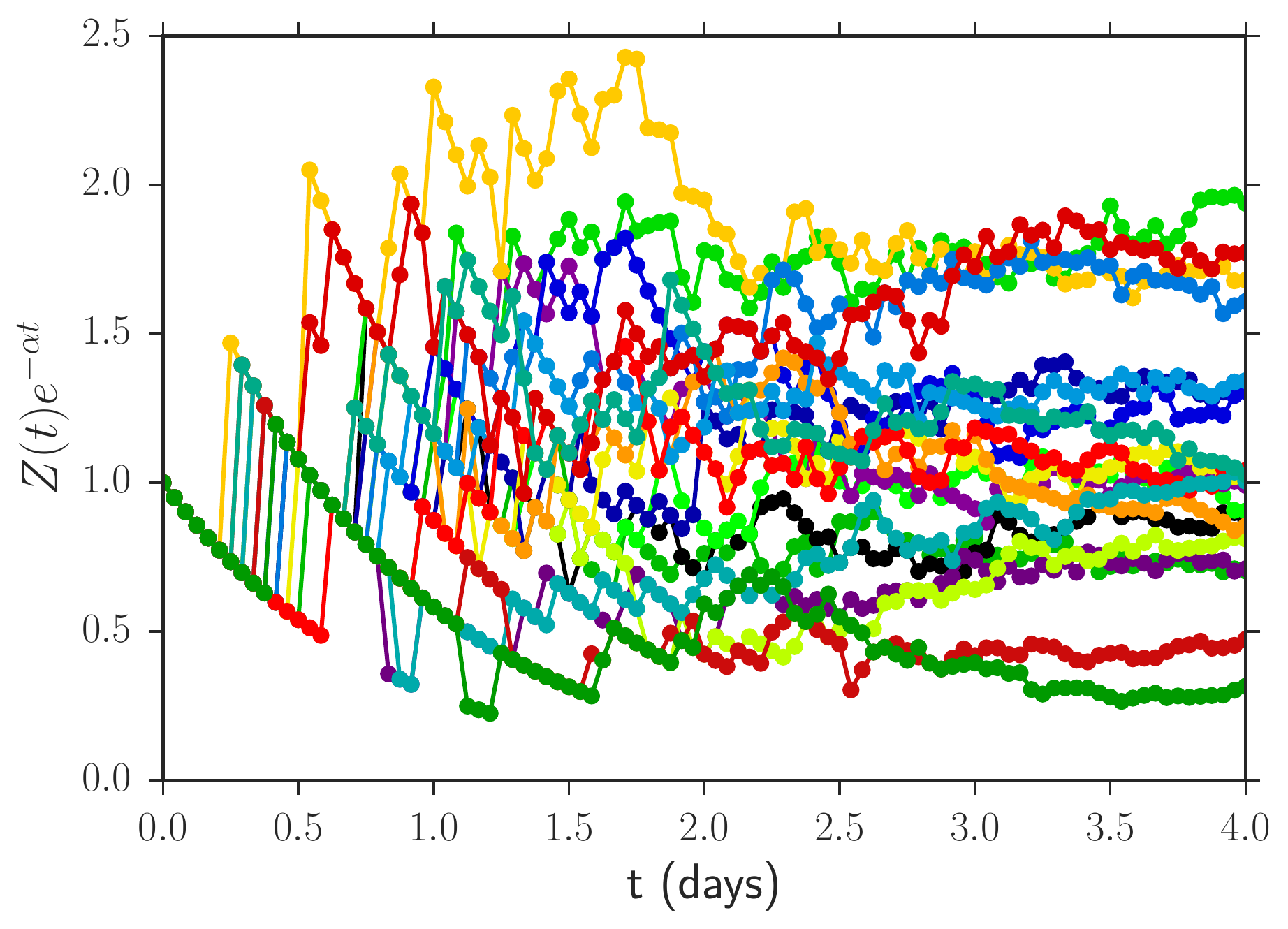}
}
\subfigure[\label{fig:comparison_Z_and_G_b}]{
\includegraphics[scale=0.4]{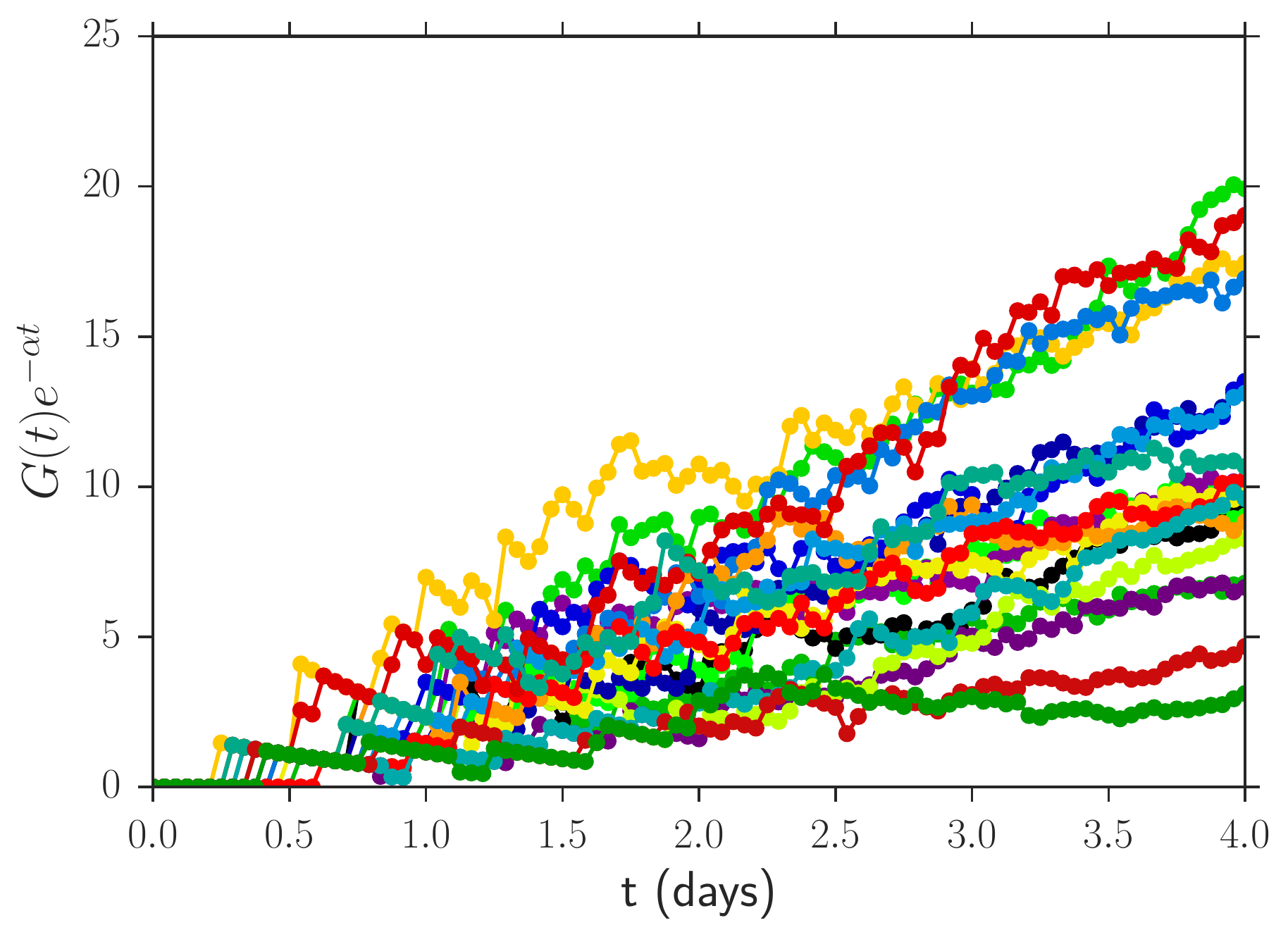}
}
\caption{{\bf Growth rates of population size, $Z(t)$, and total generation, $G(t)$ of a super-critical Bellmann-Harris
process.} 
Each plots present 20 Monte Carlo simulations of a Bellmann-Harris
branching process starting at $t=0$ with a single cell, where paths are
conditioned to have living cells at the final time-point of the simulation. Lifetimes
are lognormal with mean $9.3$ hours and standard deviation $2.54$,
which coincide with those measured for murine B cells stimulated {\it in vitro}
with CpG DNA \cite{Hawkins09}. At the end of each cell's life it gives rise to no
cells
with probability $1/5$ and two with probability $4/5$.  (a) With
$Z(t)$ being the population size at time $t$ and
$\alpha>0$ being the Malthusian parameter defined in equation~\eqref{eq:malthus},
this figure plots the evolution of $Z(t)/e^{\alpha t}$, which is
known to converge almost surely and in mean square to a random
variable $A$, e.g. \cite{harris1963theory}. (b) With $G(t)$ denoting
the total generation of the process (see Fig. \ref{fig:1}) at time
$t$, for the same paths this plot shows $G(t)/e^{\alpha t}$, which
grows linearly over time with a random slope $B$. Results in Section~\ref{ch:MGF W}
establish that $A$ and $B$ are almost surely the same, up
to a multiplicative constant, on a path-by-path basis.}
\label{fig:comparison_Z_and_G}
\end{figure}

\begin{figure}
\subfigure[\label{fig:comparison_W_Wtilde_a}]{
\includegraphics[scale=0.4]{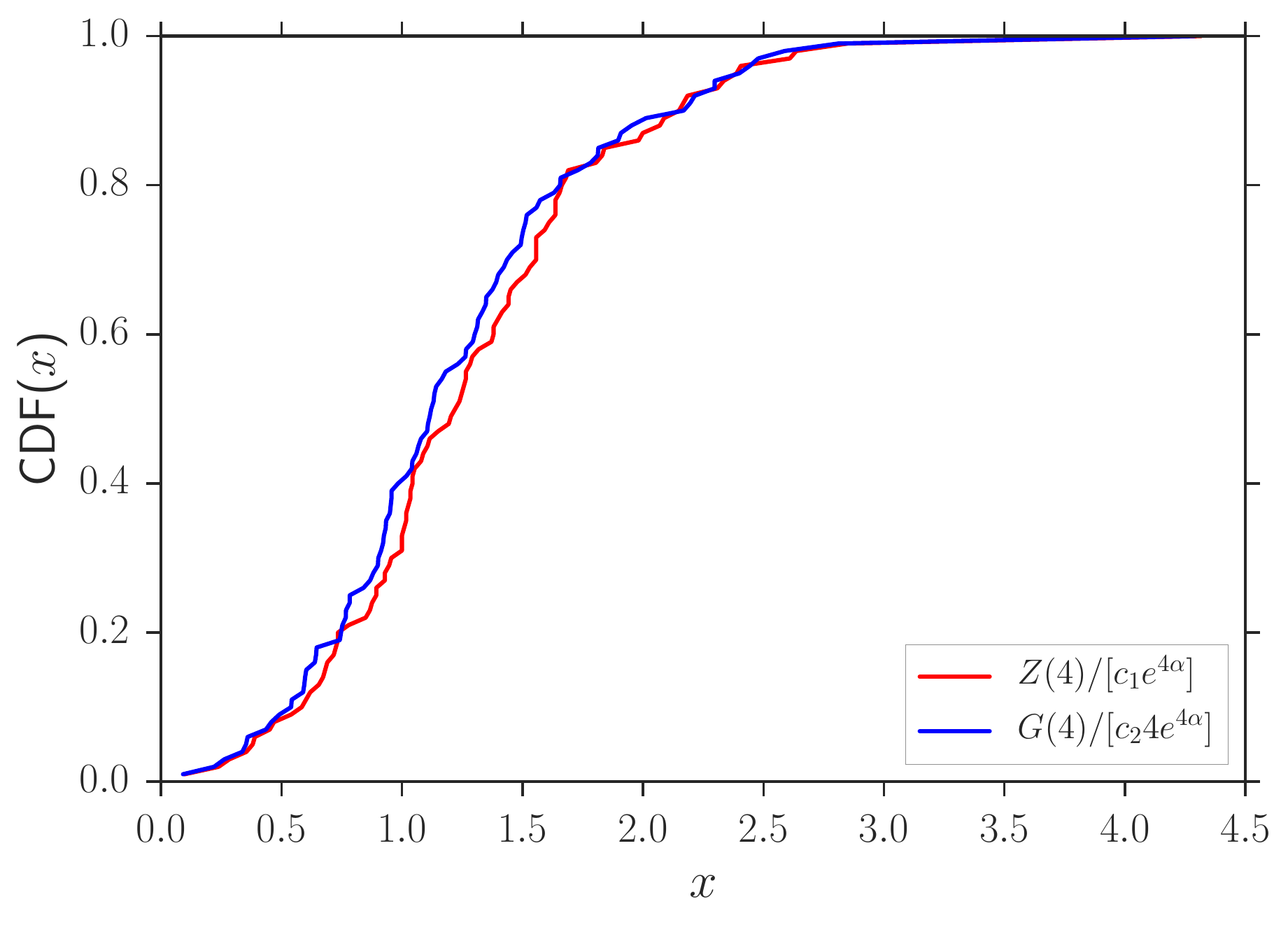}
}
\subfigure[\label{fig:comparison_W_Wtilde_b}]{
\includegraphics[scale=0.4]{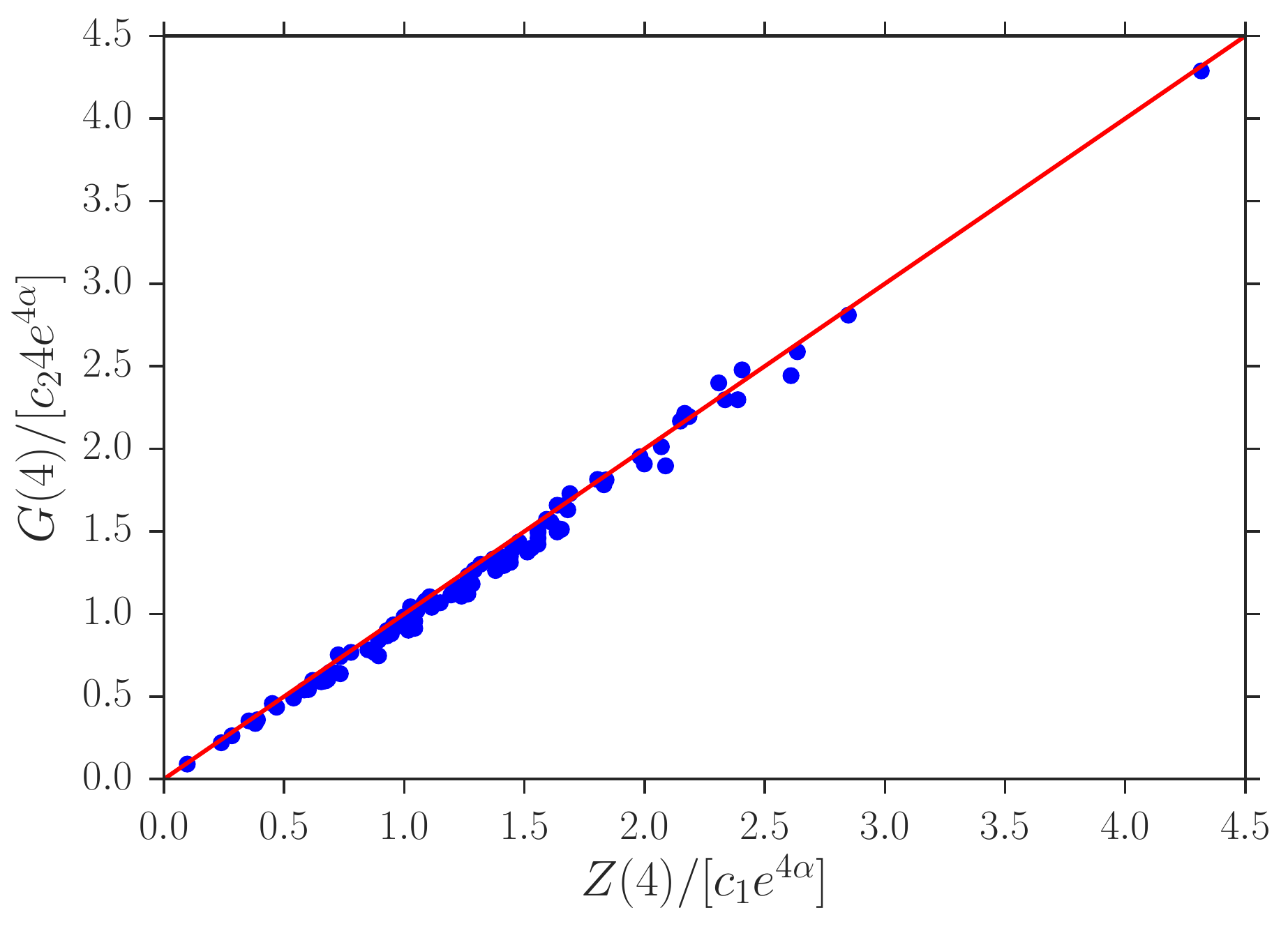}
}
\subfigure[\label{fig:comparison_W_Wtilde_c}]{
\includegraphics[scale=0.4]{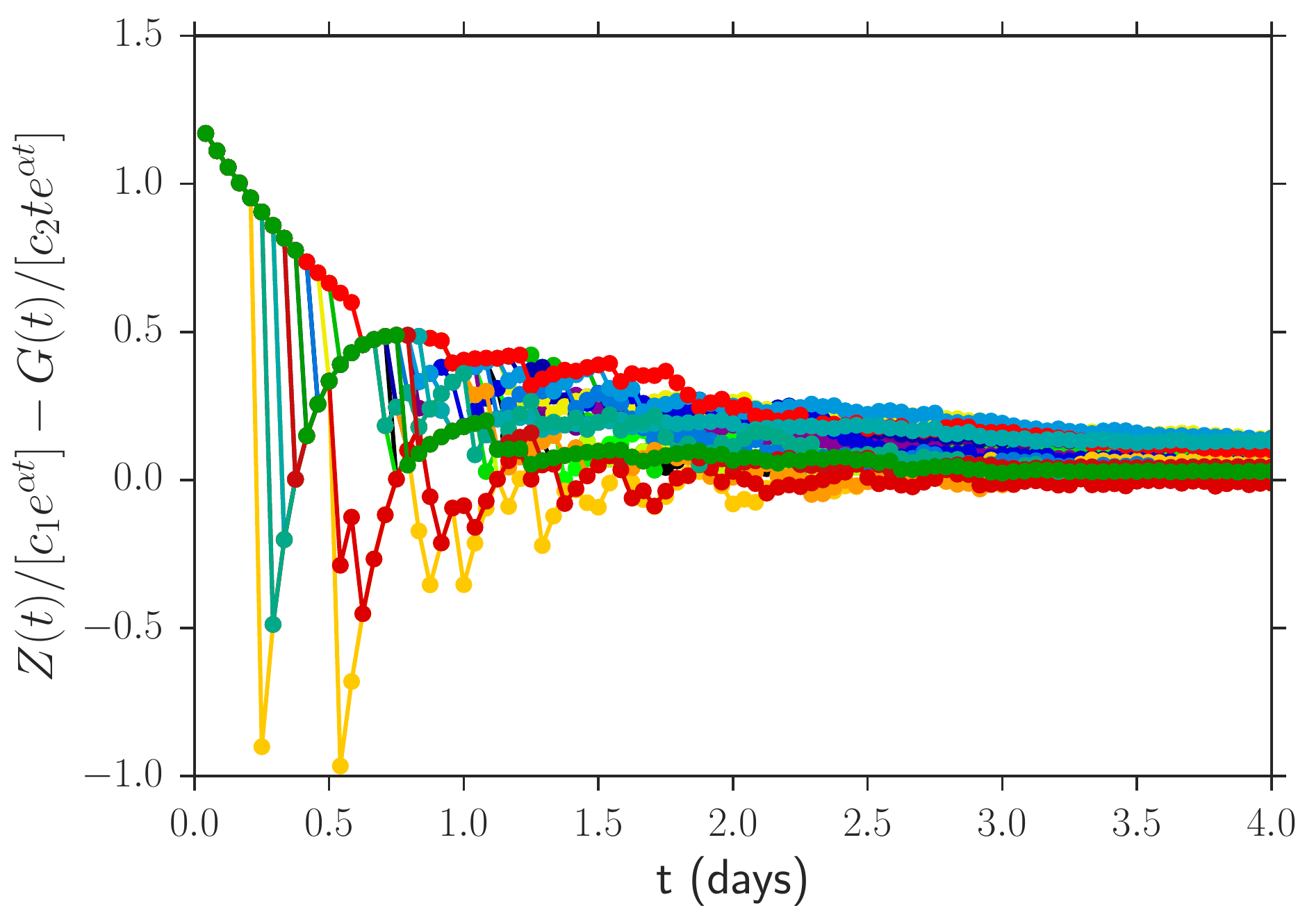}
}
\caption{{\bf Comparison between simulations of $Z(t)/e^{\alpha t}$ and $G(t)/(te^{\alpha t})$.} 
These show results from $100$ Monte Carlo simulations of a Bellman-Harris
process with paramaterization as in Fig.  \ref{fig:comparison_Z_and_G}.
(a) At $t=4$ days, empirical cumulative distribution function (ECDF)
of $Z(t)/(c_1e^{\alpha t})$ and $G(t)/(c_2 te^{\alpha t})$ are
shown, where $c_1$ and $c_2$ are constants that normalise the limit
behaviour of means of the two processes and are computed numerically. 
The ECDFs of the prefactor
on the population size and the slope of the total generation process
are similar suggesting that they follow the same distribution.  (b)
Also at $t=4$ days, the scatter plot of $Z(t)/(c_1e^{\alpha t})$
versus $G(t)/(c_2 te^{\alpha t})$ on a path-by-path basis suggests a
stronger result, that there is equality almost surely.  This
impression is further informed by plot (c) where $20$ paths describing
the evolution over time of  $Z(t)/(c_1e^{\alpha t})-G(t)/(c_2
te^{\alpha t})$, which appear to converge to zero as $t$ increases,
are displayed.}
\label{fig:comparison_W_Wtilde}
\end{figure}

\section{Convergence of the normalised average generation of a super-critical Bellman-Harris process}  
\label{sec:main}
\subsection{Assumptions, notation and previous results}\label{ch:assumption}

The following notation and assumptions are in force throughout
Section \ref{sec:main}. We consider a Bellman-Harris branching
processes with strictly positive non-lattice lifetime random variable
$L$ and non-negative offspring random variable $N$. We define
$\EN:=\mathbb{E}(N)$ and $\ENtwo:=\mathbb{E}(N(N-1))$, and assume
that both are finite. We work within the super-critical case,
$\EN>1$, so that the population has a positive probability of
escaping extinction~\cite{harris1963theory}.

We make use of the Malthusian parameter $\alpha$ defined
in~\eqref{eq:malthus}. As $\EN>1$, $\alpha>0$ exists and is unique.
For $\EN>1$, it is established in Proposition 1 of
\cite{weber2016inferring} that the Malthusian parameter $\alpha$
is a real analytic function of $\EN$. For our purposes, we don't
need to consider $\alpha$ as a function of $\EN$, but we will
sometimes use the notation $\alpha'$ to indicate the value
$d\alpha(x)/dx|_{x=\EN}$. To study the limit behaviour of the scaled
version of the process $(Z(t),G(t))$ we use standard notions of
convergence in distribution (D), in mean square ($L^2$), and almost
surely (a.s.)~\cite{Rudin76,feller68}. Convolution between functions
will be denoted by the operator $*$.
Occasionally in the text we will refer to the underlying measurable
space or the probability space, which we denote as $(\Omega,
\mathcal{B}(\Omega))$ and $(\Omega, \mathcal{B}(\Omega), \mathbb{P})$,
respectively. Example constructions of such spaces can be found
in~\cite[Chapter VI.2]{harris1963theory}.

A brief summary of known results concerning $Z(t)$ and $G(t)$ will
follow. According to~\cite{harris1963theory,jagers1969renewal},
under the above assumptions, the limit behaviour of $Z(t)$ satisfies
\begin{equation}\label{eq:Z(t)}
    \frac{Z(t)}{e^{\alpha t}} \xrightarrow[]{a.s., L^2} c\WZ,
\end{equation}
where $\WZ$ is a non-negative random variable such that $\E(\WZ)=1$, and 
\begin{align*}
    c= \lim_{t \to \infty}\frac{\E({Z}(t))}{e^{\alpha t}}= 
	\frac{\int_0^\infty \mathbb{P}(L>t) e^{- \alpha t} dt}{h \int_0^\infty u e^{- \alpha u} d\mathbb{P}(L \leq u)}= \frac{h-1}{h^2 \alpha \int_0^\infty u e^{- \alpha u} d\mathbb{P}(L\leq u)}. 
\end{align*}
For the expected value of $G(t)$, the following is proven in Theorem 2 of ~\cite{weber2016inferring}
\begin{align}\label{eq:E(G(t))}
	\lim_{t \to \infty}
    \frac{\mathbb{E}(G(t))}{t e^{\alpha t}} 
	= h \alpha' c,\,
\text{where }
\alpha' = \frac{1}{h^2 \int_0^{+ \infty}u e^{- \alpha u}d\mathbb{P}(L\leq u)}.
\end{align}
There, we find also information concerning the asymptotic covariance
of $G(t)$ and $Z(t)$ and the ratio of their expectations,
\begin{align}
	\lim_{t \to \infty}
  \frac{\mathbb{E}(G(t)Z(t))}{t e^{2\alpha t}} = c^2 h \alpha' k 
	\text{  and  }
	\lim_{t \to \infty}
  \frac{\mathbb{E}(G(t))}{t \mathbb{E}(Z(t))} = h \alpha',
\text{ where } 
k=\frac{v \int_0^\infty e^{- 2 \alpha u} d\mathbb{P}(L\leq u)}{1 - h \int_0^\infty e^{- 2 \alpha u} d\mathbb{P}(L\leq u) }.\label{eq:k} 
\end{align} 

The scaling of means in equations~\eqref{eq:Z(t)} and~\eqref{eq:E(G(t))}
suggest the definition of normalised versions of the processes
$Z(t)$ and
$G(t)$, 
\begin{align}\label{eq:W_and_V}
\WZtime{t}:=\frac{Z(t)}{c e^{\alpha t}} \text{ and } 
	\WGtime{t}:=\frac{G(t)}{c h \alpha' t e^{\alpha t}}, 
\end{align}
whose use will simplify notation in the proofs.

In order to establish one of the main results of the paper, equation~\eqref{eq:main_result},
stated in Corollary \ref{cor:main} of Section~\ref{ch:almost sure
convergence}, we study the limit behaviour of the process
$\{\WGtime{t}\}$. We do that in two steps: first, in Section~\ref{ch:mean
square convergence} we consider $\{\WGtime{t}\}$ as an $L^2$ process
and determine its mean square limit; then, in Section~\ref{ch:almost
sure convergence} we reinforce that result by proving that the
convergence is also valid with probability $1$ under a condition
on the speed of $L^2$ convergence. In Section~\ref{ch:mean square
convergence}, we make extensive use of a particular version of Key
Renewal Theorem for defective measures that we establish in
Section~\ref{ch:lemmas}. Once we prove in Section~\ref{ch:MGF W}
that $\WGtime{t}$ and $\WZtime{t}$ share the same random prefactor
on front of their dominant term for large $t$, we are finally able
to characterise the limit behaviour of $G(t)/(tZ(t))$.

\subsection{A new Renewal Theorem for Defective Measures}\label{ch:lemmas}

In order to prove~\eqref{eq:E(G(t))} in~\cite{weber2016inferring},
a version of the Renewal Theorem due to Asmussen, Theorem 6.2(b)
of \cite{asmussen1998probabilistic}, is used in a fundamental way.
In this section we generalise that theorem to make it applicable
for defective measures, i.e.  measures with total mass less than one. 
Before going to the main result of the section,
Theorem~\ref{thm:renewal theorem}, we first state a non-standard
version of the classical Dominated Convergence Theorem (DCT),
which can be applied to a collection of sequences of functions $\{(f_{t,\tau})_{t\in \mathbbm{R}_{\geq 0}} : \tau \in \mathbbm{R}_{\geq 0} \}$,
each one converging pointwise, when $t \to \infty$, to a same function $f$, uniformly for $\tau \geq 0$. This can be proved essentially repeating the same steps of the classical DCT, including the use of Fatou's lemma, but this time the hypothesis of the uniformity in $\tau$ allows a stronger conclusion. 
This proposition is followed by a lemma that depends on it.

\begin{proposition}[Non-standard DCT]\label{cor:DCT}
Let $(\mathbbm{R},\mathcal{B}(\mathbbm{R}),\mu)$ be a measure space,
and for every $\tau \geq 0$ let $(f_{t,\tau})_{t \geq 0}$ be a
sequence of functions in $L^1(\mu)$  that converges pointwise to
$f$ uniformly for $\tau \in [0,\infty)$, i.e. given $\epsilon>0$ and $u \in \mathbbm{R}$ there exists a $t_{\epsilon, u}>0$ s.t. for every $t\geq t_{\epsilon, u}$ and $\tau \geq 0$ we have $|f_{t,\tau}(u)- f(u)|<\epsilon$. 
Assume there is $g \in L^1(\mu)$ s.t.
$|f_{t,\tau}(u)|\leq g(u)$ for every $t, \tau$, and $u$. Then, $f
\in L^1(\mu)$ and
\begin{equation*}
    \lim_{t\to \infty} \int_{\mathbbm{R}} f_{t,\tau}(u) d\mu(u) = \int_{\mathbbm{R}} f(u) d\mu(u) \qquad \text{ uniformly for $\tau \geq 0$,}
\end{equation*}
i.e. given $\epsilon>0$ there exists a $t^*_{\epsilon}>0$ s.t. for every $t\geq t^*_{\epsilon}$ and $\tau \geq 0$ we have $|\int_{\mathbbm{R}} f_{t,\tau}(u) - f(u) d\mu(u)|<\epsilon$.
\end{proposition}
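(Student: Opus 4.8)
The plan is to reduce the uniform-in-$\tau$ statement to the classical Dominated Convergence Theorem by extracting the worst case over $\tau$. The key structural observation is that the hypotheses supply a uniform envelope $g \in L^1(\mu)$ and a uniform-in-$\tau$ mode of pointwise convergence, so that the only role of $\tau$ is to index a family of integrands all squeezed between $-g$ and $g$ and all converging to the \emph{same} limit $f$. First I would record that $f \in L^1(\mu)$: since $|f_{t,\tau}(u)| \le g(u)$ for all $t,\tau,u$ and $f_{t,\tau}(u) \to f(u)$ pointwise, we get $|f(u)| \le g(u)$, whence $f \in L^1(\mu)$ by comparison. This also guarantees all the integrals in the statement are finite and the difference $\int (f_{t,\tau} - f)\,d\mu$ is well defined.

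The heart of the argument is to control $\sup_{\tau \ge 0}\bigl|\int_{\mathbbm{R}} (f_{t,\tau}(u) - f(u))\,d\mu(u)\bigr|$ uniformly as $t \to \infty$. Define $h_t(u) := \sup_{\tau \ge 0} |f_{t,\tau}(u) - f(u)|$. Then $h_t$ is dominated by $2g \in L^1(\mu)$ pointwise, and I claim $h_t(u) \to 0$ pointwise as $t \to \infty$: fixing $u$ and $\epsilon > 0$, the uniform-in-$\tau$ convergence hypothesis yields a threshold $t_{\epsilon,u}$ such that $|f_{t,\tau}(u) - f(u)| < \epsilon$ for \emph{all} $\tau \ge 0$ and all $t \ge t_{\epsilon,u}$, so taking the supremum over $\tau$ gives $h_t(u) \le \epsilon$ for $t \ge t_{\epsilon,u}$. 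Applying the classical DCT to the single sequence $(h_t)_{t \ge 0}$, dominated by $2g$ and converging pointwise to $0$, yields $\int_{\mathbbm{R}} h_t\,d\mu \to 0$. The desired conclusion then follows from the bound
\begin{equation*}
\sup_{\tau \ge 0}\Bigl|\int_{\mathbbm{R}} \bigl(f_{t,\tau}(u) - f(u)\bigr)\,d\mu(u)\Bigr|
\;\le\;
\int_{\mathbbm{R}} h_t(u)\,d\mu(u) \xrightarrow[t\to\infty]{} 0,
\end{equation*}
since for each fixed $\tau$ we have $\bigl|\int (f_{t,\tau} - f)\bigr| \le \int |f_{t,\tau} - f| \le \int h_t$.

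The one point requiring care is the measurability of the supremum function $h_t$, since an uncountable supremum of measurable functions need not be measurable. The safe route is to avoid passing through $h_t$ as a measurable object altogether: rather than integrating $h_t$, I would argue directly that for every $\epsilon > 0$ there is a $t^*_\epsilon$ with $\int |f_{t,\tau} - f|\,d\mu < \epsilon$ for all $t \ge t^*_\epsilon$ simultaneously. Concretely, one mimics the classical DCT proof via Fatou applied to $2g - |f_{t,\tau} - f| \ge 0$, but keeps track of the fact that the exceptional-set estimates coming from the uniform-in-$\tau$ hypothesis are themselves independent of $\tau$; this is exactly the strengthening the proposition advertises. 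I expect this measurability/uniformity bookkeeping to be the main obstacle, not any deep analytic content: the classical DCT machinery applies essentially verbatim, and the only novelty is verifying that each quantitative estimate in that proof can be made $\tau$-free thanks to the uniform convergence and the single $\tau$-independent dominating function $g$.
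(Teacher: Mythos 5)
Your proposal is correct in substance, and its two halves relate to the paper in an interesting way: the paper offers no detailed proof of this proposition at all, only the one-sentence indication that it ``can be proved essentially repeating the same steps of the classical DCT, including the use of Fatou's lemma,'' with the uniformity in $\tau$ yielding the stronger conclusion --- which is precisely your fallback route, stated at the same level of vagueness as the paper's. Your primary route --- packaging the uniformity into the envelope $h_t(u):=\sup_{\tau\geq 0}|f_{t,\tau}(u)-f(u)|$ and applying the classical DCT once --- is a genuinely different and cleaner decomposition, and you correctly flag its weak point: an uncountable supremum of measurable functions need not be measurable. What your write-up leaves unresolved is how the ``$\tau$-free bookkeeping'' is actually carried out (note that the Fatou proof of the DCT involves no exceptional sets; applied at each fixed $\tau$ it yields only pointwise-in-$\tau$ convergence of the integrals, with no control on the rate). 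The device that rigorously closes both routes is sequential extraction, which sidesteps measurability because one takes suprema of numbers rather than of functions: set $a(t):=\sup_{\tau\geq 0}\int|f_{t,\tau}-f|\,d\mu$, a well-defined number bounded by $2\int g\,d\mu$; given any sequence $t_n\to\infty$, choose $\tau_n$ with $\int|f_{t_n,\tau_n}-f|\,d\mu > a(t_n)-1/n$; the uniform-in-$\tau$ hypothesis guarantees $f_{t_n,\tau_n}\to f$ pointwise even though the $\tau_n$ are arbitrary (this is exactly where the uniformity enters), so the classical sequential DCT, with dominating function $2g$, gives $\int|f_{t_n,\tau_n}-f|\,d\mu\to 0$ and hence $a(t_n)\to 0$; since the sequence $t_n\to\infty$ was arbitrary, $a(t)\to 0$, and the asserted uniform convergence follows from $\bigl|\int (f_{t,\tau}-f)\,d\mu\bigr|\leq a(t)$. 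With that one sentence added, your argument is complete and is arguably more rigorous than what the paper itself records.
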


We are now going to use this version of the DCT to study the limit
behaviour of convolutions between functions and probability measures.
We are interested in these particular structures because we will
show that the moments of $G(t)$ can be written in that form.

\begin{lemma}[Convolution with a finite measure doesn't change convergence rates]\label{lem:convergenza uniforme}
Consider $f=f(t,\tau): \mathbb{R}_{\geq 0} \times \mathbb{R}_{\geq
0} \to \mathbb{R}$ locally bounded in $t$ and s.t., for every $\tau \geq 0$, $f(t,\tau)/[t^p(t+\tau)^q] \to c_1 $ when $t\to \infty$, with $c_1<\infty$, $p,q \geq 0$, and let $\mu$ be a finite measure on $( \mathbb{R}_{\geq 0},
\mathcal{B}(\mathbb{R}_{\geq 0}))$. Then, for every $\tau\geq 0$
\begin{equation}\label{eq:integral equation_2}
    \lim_{t\to \infty}\frac{1}{t^p(t+\tau)^q} \int_0^t f(t-u,\tau) \mu(du) =c_1 \mu([0,\infty)).
\end{equation}
Furthermore, if $|f(t,\tau)|\leq f_1(t)f_2(t+\tau)$, with
$f_i(t): \mathbb{R}_{\geq
0} \to \mathbb{R}_{\geq
0}$ locally bounded functions for $i\in\{1,2\}$,  $f_1(s)/s^p \to a_1$, $f_2(s)/s^q
\to a_2$, and $f(t,\tau)/[t^p(t+\tau)^q] \xrightarrow[t \to \infty]{}
c_1$ uniformly for $\tau\geq 0$ with $a_1, a_2, c_1 <\infty$ and
$p,q \geq 0$, then~\eqref{eq:integral equation_2} is true uniformly for $\tau \in [0, \infty)$. 
\end{lemma}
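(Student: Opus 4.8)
The plan is to recast both claims as dominated-convergence statements for the family of integrands
\[ g_{t,\tau}(u) := \frac{f(t-u,\tau)}{t^p(t+\tau)^q}\,\mathbbm{1}_{[0,t]}(u), \qquad u \geq 0, \]
so that $\frac{1}{t^p(t+\tau)^q}\int_0^t f(t-u,\tau)\,\mu(du) = \int_{[0,\infty)} g_{t,\tau}(u)\,\mu(du)$, the indicator merely encoding the truncation to $[0,t]$. The guiding observation is that $g_{t,\tau}(u) \to c_1$ as $t\to\infty$ for each fixed $u$, which I would read off by factoring
\[ g_{t,\tau}(u) = \frac{f(t-u,\tau)}{(t-u)^p(t-u+\tau)^q}\cdot\left(\frac{t-u}{t}\right)^p\left(\frac{t-u+\tau}{t+\tau}\right)^q\,\mathbbm{1}_{[0,t]}(u); \]
the first factor tends to $c_1$ by hypothesis (its first argument $t-u\to\infty$), the polynomial ratio tends to $1$, and the indicator is eventually $1$. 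Since $\mu$ is finite, the limit integrand $c_1$ lies in $L^1(\mu)$ and $\int c_1\,d\mu = c_1\mu([0,\infty))$, the claimed right-hand side.

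For the first, pointwise-in-$\tau$ assertion, I would fix $\tau$ and apply the classical DCT. The only work is a dominating function, and a constant suffices because $\mu$ is finite: splitting the range of $u$ by whether $t-u$ exceeds a threshold $t_0$, the ratio convergence gives $|f(t-u,\tau)| \leq (|c_1|+1)(t-u)^p(t-u+\tau)^q \leq (|c_1|+1)t^p(t+\tau)^q$ on the large-argument part, so $|g_{t,\tau}|\leq |c_1|+1$ there; on the complementary part $t-u\in[0,t_0]$ the local boundedness of $f(\cdot,\tau)$ gives $|f(t-u,\tau)|\leq M(\tau)$ and hence, for $t$ large, $|g_{t,\tau}|\leq M(\tau)$. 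Thus $|g_{t,\tau}(u)|\leq \max(|c_1|+1,M(\tau))$, a $\mu$-integrable constant, and DCT yields \eqref{eq:integral equation_2}.

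For the uniform-in-$\tau$ assertion I would instead invoke the Non-standard DCT, Proposition \ref{cor:DCT}, with limit function the constant $c_1$. Two inputs must be checked. First, $g_{t,\tau}(u)\to c_1$ \emph{uniformly in $\tau$} for each fixed $u$: the first factor above converges to $c_1$ uniformly in $\tau$ because the ratio hypothesis is now uniform and $t-u\to\infty$, while $(t-u+\tau)/(t+\tau)=1-u/(t+\tau)\to 1$ uniformly in $\tau$ since $u/(t+\tau)\leq u/t\to 0$, and $((t-u)/t)^p$ does not involve $\tau$. Second, and this is the crux, I need a single dominating function valid for all $t$ large \emph{and all} $\tau\geq 0$. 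The bare ratio convergence is insufficient, because the convolution probes $f(t-u,\tau)$ with $t-u$ possibly small (when $u\approx t$), where the ratio gives no control; this is exactly why the hypothesis supplies the product bound $|f(t,\tau)|\leq f_1(t)f_2(t+\tau)$. Using it I would bound
\[ |g_{t,\tau}(u)| \leq \frac{f_1(t-u)}{t^p}\cdot\frac{f_2(t-u+\tau)}{(t+\tau)^q}\,\mathbbm{1}_{[0,t]}(u), \]
and control each factor separately via $f_1(s)/s^p\to a_1$, $f_2(s)/s^q\to a_2$, and local boundedness of $f_1,f_2$ on the small-argument ranges, noting $t-u\leq t$ and $t-u+\tau\leq t+\tau$ throughout. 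This produces a constant bound $C_1C_2$ independent of $t$ (large) and $\tau$, hence in $L^1(\mu)$.

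The main obstacle is this last, uniform dominating bound: the growth of $f_2(t-u+\tau)$ as $\tau\to\infty$ must be absorbed by the normaliser $(t+\tau)^q$, which works precisely because $t-u+\tau\leq t+\tau$, while the small-argument behaviour near $u=t$ is tamed by local boundedness of $f_1$ together with $t^p\geq (t-u)^p$. With the uniform pointwise limit and the common dominating constant both in hand, Proposition \ref{cor:DCT} delivers $\int g_{t,\tau}\,d\mu\to c_1\mu([0,\infty))$ uniformly in $\tau$, which is \eqref{eq:integral equation_2} uniformly for $\tau\in[0,\infty)$.
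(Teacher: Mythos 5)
Your proof is correct and follows essentially the same route as the paper's: rewrite the normalised convolution as $\int g_{t,\tau}\,d\mu$, dominate the integrand by a $\tau$- and $t$-independent constant by splitting at a threshold where the ratio hypotheses kick in (using the product bound $f_1(t)f_2(t+\tau)$ together with $(t-u)^p\leq t^p$, $(t-u+\tau)^q\leq(t+\tau)^q$ and local boundedness near the origin), and then invoke Proposition~\ref{cor:DCT} for the uniform statement and the classical DCT for the pointwise one. The only difference is cosmetic: you spell out the pointwise-uniform convergence of the integrand via the factoring into the ratio and polynomial correction terms, a step the paper leaves implicit.
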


\begin{proof}
We only prove the second part of the lemma, as the first part follows from the same rationale with the use of the classical Dominated Convergence Theorem instead of Proposition~\ref{cor:DCT}.

For the following, we extend the functions $f$, $f_1$, and $f_2$ to $\mathbbm{R}\times \mathbb{R}_{\geq 0}$, $\mathbbm{R}$, and $\mathbbm{R}$, respectively, by defining $f(t,\tau)=f_1(t)=f_2(t)=0$ when $t <0$. If we can establish that $|f(t-u,\tau)|/[t^p(t+\tau)^q]
\mathbbm{1}_{[0,t)}(u)$ is bounded by a constant $M$, for every $u
\in \mathbbm{R}$, $\tau \geq 0$, and $t$ sufficiently large, we can
apply the DCT in Proposition~\ref{cor:DCT} and conclude
that equation~\eqref{eq:integral equation_2} holds uniformly for $\tau \in [0, \infty)$.

Given $\epsilon>0$, from the hypotheses made, we know that there exists $u_{\epsilon}>0$
s.t. for every $u\geq u_{\epsilon}$ we have $f_1(u)/u^p \leq
a_1+\epsilon$ and $f_2(u)/u^p \leq
a_2+\epsilon$. Without loss of generality we can suppose $t\geq t_{\epsilon,u}:= \max\{u_{\epsilon}, 1\}$. So, for every $u\in \mathbbm{R}$, we have
\begin{align}\label{eq:f_1_sup}
    0 \leq g_t(u):=\frac{f_1(u)}{t^p}\mathbbm{1}_{[0,t)}(u)&= \frac{f_1(u)}{t^p}\mathbbm{1}_{[0,u_{\epsilon})}(u) + \frac{f_1(u)}{t^p}\mathbbm{1}_{[u_{\epsilon},t)}(u) \leq f_1(u) \mathbbm{1}_{[0,u_{\epsilon})}(u) + \frac{f_1(u)}{u^p}\mathbbm{1}_{[u_{\epsilon},\infty)}(u) \notag \\
    & \leq \sup_{[0,u_{\epsilon})}f_1(u) + a_1+\epsilon = M_1 <\infty,
\end{align}
where in the last equality we have used the fact that $f_1$ is a locally bounded function. From~\eqref{eq:f_1_sup}, we have that $g_t(u)$ is dominated by $M_1$ for every $u\in \mathbbm{R}$ and $t\geq t_{\epsilon,u}$. So, the same will be true for $g_t(-u)$, and for its translation $g_t(t-u)$. A similar reasoning can be done with $f_2$, obtaining
\begin{align*}
  \frac{f_1(t-u)}{t^p}\mathbbm{1}_{[0,t)}(u) \leq M_1, \qquad \frac{f_2(t-u)}{t^q}\mathbbm{1}_{[0,t)}(u) \leq M_2,
\end{align*}
for every $u\in \mathbbm{R}$ and $t\geq t_{\epsilon,u}$.
Remembering that for hypothesis $|f(t,\tau)|\leq f_1(t)f_2(t+\tau)$, for every $u\in \mathbbm{R}$, $t\geq t_{\epsilon,u}$, and $\tau\geq 0$ we have
\begin{equation*}
    \frac{|f(t-u,\tau)|}{t^p(t+\tau)^q} \mathbbm{1}_{[0,t)}(u) \leq \frac{f_1(t-u)}{t^p}\mathbbm{1}_{[0,t)}(u)\frac{f_2(t+\tau -u)}{(t+\tau)^q}\mathbbm{1}_{[0,t+\tau)}(u) \leq M_1M_2=:M
\end{equation*}
That concludes the proof.
\end{proof}

Armed with that lemma, we can now prove the main result of this section.

\begin{theorem}[A defective measure version of Theorem 6.2(b) \cite{asmussen1998probabilistic}] \label{thm:renewal theorem} 
Consider the integral equation 
\begin{equation}\label{eq:integral equation}
    K(t, \tau)=f(t, \tau) + \int_0^t K(t-u, \tau) \rho(du),
\end{equation}
where $K,f: \mathbb{R}_{\geq 0} \times \mathbb{R}_{\geq 0} \to \mathbb{R}$, and $\rho$ is a positive defective measure on $( \mathbb{R}_{\geq 0}, \mathcal{B}(\mathbb{R}_{\geq 0}))$, i.e. $\rho([0,\infty))<1$. 
If $f(t,\tau)$ is locally bounded in $t$ and s.t., for every $\tau \geq 0$, $f(t,\tau)/[t^p(t+\tau)^q] \to c_1 $ when $t\to \infty$, with $c_1<\infty$, $p,q \geq 0$, then for every $\tau \geq 0$
\begin{equation}\label{eq:result_corollary}
    \lim_{t\to \infty}\frac{K(t, \tau)}{t^p(t+\tau)^q}=\frac{c_1}{1-\rho([0,\infty))}.
\end{equation}

Furthermore, if $f$ is s. t. $|f(t,\tau)|\leq f_1(t)f_2(t+\tau)$, with $f_i(t): \mathbb{R}_{\geq
0} \to \mathbb{R}_{\geq
0}$ locally bounded functions, $i\in\{1,2\}$, s.t.
$f_1(t)/t^p \to a_1$, $f_2(t)/t^q \to a_2$, and $f(t,\tau)/[t^p(t+\tau)^q] \xrightarrow[t \to \infty]{} c_1 $ uniformly for $\tau\geq 0$ with $a_1,a_2,c_1<\infty$ and $p,q \geq 0$, then~\eqref{eq:result_corollary} is true uniformly for $\tau \geq 0$.
\end{theorem}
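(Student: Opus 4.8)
The plan is to reduce the defective renewal equation~\eqref{eq:integral equation} to a single convolution against a \emph{finite} measure, after which Lemma~\ref{lem:convergenza uniforme} delivers both conclusions directly. Write $r := \rho([0,\infty)) < 1$ and let $\rho^{*n}$ denote the $n$-fold convolution of $\rho$ with itself (with $\rho^{*0}$ the unit mass at the origin), a positive measure of total mass $r^n$. The crucial role of defectiveness is that the \emph{renewal measure}
\begin{equation*}
U := \sum_{n\ge 0}\rho^{*n}
\end{equation*}
is then a finite measure, with total mass $U([0,\infty)) = \sum_{n\ge 0} r^n = 1/(1-r)$.

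Next I would show that the solution of~\eqref{eq:integral equation} admits the closed form $K(t,\tau)=\int_0^t f(t-u,\tau)\,U(du)$. Iterating~\eqref{eq:integral equation} $N$ times yields
\begin{equation*}
K(t,\tau) = \sum_{n=0}^{N-1}\left(f(\cdot,\tau)*\rho^{*n}\right)(t) + \left(K(\cdot,\tau)*\rho^{*N}\right)(t),
\end{equation*}
and since $K$ is locally bounded in $t$ the remainder obeys $|(K(\cdot,\tau)*\rho^{*N})(t)| \le \sup_{[0,t]}|K(\cdot,\tau)|\,r^N \to 0$. This simultaneously gives uniqueness in the class of locally bounded solutions and the Neumann representation $K(t,\tau)=\sum_{n\ge0}(f(\cdot,\tau)*\rho^{*n})(t)$. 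Because $\sum_n\int_0^t|f(t-u,\tau)|\,\rho^{*n}(du)\le \sup_{[0,t]}|f(\cdot,\tau)|/(1-r)<\infty$, Fubini's theorem lets me interchange summation and integration and collapse the series into the single integral against $U$ claimed above.

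With this representation in hand the theorem is an immediate application of Lemma~\ref{lem:convergenza uniforme} with the finite measure $\mu=U$. Its first part gives, for each fixed $\tau$,
\begin{equation*}
\lim_{t\to\infty}\frac{K(t,\tau)}{t^p(t+\tau)^q} = c_1\,U([0,\infty)) = \frac{c_1}{1-\rho([0,\infty))},
\end{equation*}
which is~\eqref{eq:result_corollary}; and under the extra domination $|f(t,\tau)|\le f_1(t)f_2(t+\tau)$ together with the uniform convergence hypothesis, its second part upgrades this to a limit that is uniform in $\tau\ge0$.

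I expect the main obstacle to be the reduction step rather than the limit itself. The passage from the defective measure $\rho$ to the finite renewal measure $U$ is the only place where $r<1$ is genuinely needed: it is what forces $\rho^{*N}([0,t])=r^N\to0$, kills the remainder $K*\rho^{*N}$, and makes $U$ a finite measure so that Lemma~\ref{lem:convergenza uniforme} even applies. Once that reduction is secured, all of the analytic work — in particular the delicate uniform-in-$\tau$ dominated convergence — has already been carried out in Lemma~\ref{lem:convergenza uniforme} and Proposition~\ref{cor:DCT}, so no further estimates are required.
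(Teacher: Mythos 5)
Your proposal is correct and takes essentially the same route as the paper: both rest on the representation $K(t,\tau)=\int_0^t f(t-u,\tau)\,U(du)$ with the renewal measure $U=\sum_{n\ge 0}\rho^{*n}$, whose total mass $1/(1-\rho([0,\infty)))$ is finite precisely because $\rho$ is defective, followed by an application of Lemma~\ref{lem:convergenza uniforme}. The only difference is cosmetic: you derive the Neumann-series representation and uniqueness among locally bounded solutions by direct iteration, whereas the paper cites these facts from Resnick (Theorem 3.5.1 and Section 3.11 of \cite{resnick2013adventures}).
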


\begin{proof}
From~\cite[Theorem 3.5.1]{resnick2013adventures}, the only solution of~\eqref{eq:integral equation} that is bounded on every finite interval of $t$ has the form 
\begin{equation}\label{eq:K(t)_solution}
    K(t, \tau)=(U*f)_{\tau}(t)=\int_0^t f(t-u, \tau)U(du),
\end{equation}
where $U([0,t))=\sum_{n=0}^\infty \rho^{*n}([0,t))$, $\rho^{*n}([0,t))=(\rho*\rho^{*(n-1)})([0,t))$, and $\rho^{*0}([0,t))=\mathbbm{1}_{[0,\infty)}(t)$. Using Lemma~\ref{lem:convergenza uniforme} and the fact that $U([0,\infty))=1/(1-\rho([0,\infty)))$~\cite[Section 3.11]{resnick2013adventures},  we obtain~\eqref{eq:result_corollary}.
\end{proof}

Thanks to the linearity of integration, we have the following mild generalisation.

\begin{corollary}\label{cor:1}
If in Theorem~\ref{thm:renewal theorem} we substitute the condition
$|f(t,\tau)|\leq f_1(t)f_2(t+\tau)$ with $|f(t,\tau)| \leq \sum_{i=1}^n
f_{2i-1}(t)f_{2i}(t+\tau)$, where $f_i$ are locally bounded functions
s.t. $f_{2i-1}(t)/t^p \to a_{2i-1}$, $f_{2i}(t)/t^q \to a_{2i}$,
$a_{2i-1},a_{2i}<\infty$ for every $1\leq i \leq n$, then the
conclusions of Theorem~\ref{thm:renewal theorem} hold.
\end{corollary}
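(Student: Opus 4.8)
The plan is to follow the proof of Theorem~\ref{thm:renewal theorem} essentially verbatim, since the representation of the solution is unaffected by the change of hypothesis. By \cite[Theorem 3.5.1]{resnick2013adventures} the solution of \eqref{eq:integral equation} that is bounded on finite intervals is still $K(t,\tau)=\int_0^t f(t-u,\tau)\,U(du)$ with $U([0,\infty))=1/(1-\rho([0,\infty)))<\infty$, so $U$ remains a finite measure. The pointwise statement \eqref{eq:result_corollary} therefore continues to hold unchanged, as its proof invokes only the first (finite-measure) part of Lemma~\ref{lem:convergenza uniforme} together with $f(t,\tau)/[t^p(t+\tau)^q]\to c_1$, neither of which touches the domination condition. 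Consequently the only thing to re-examine is the uniform-in-$\tau$ conclusion, whose proof in Lemma~\ref{lem:convergenza uniforme} rests entirely on exhibiting a constant that dominates $|f(t-u,\tau)|/[t^p(t+\tau)^q]\,\mathbbm{1}_{[0,t)}(u)$ uniformly in $u$, $\tau$, and large $t$, thereby enabling the non-standard DCT of Proposition~\ref{cor:DCT}.

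The core step is to reconstruct that dominating constant from the summed bound. Extending each $f_i$ by $0$ on the negative half-line as before, the argument of \eqref{eq:f_1_sup} applies unchanged to each factor: for every $1\le i\le n$ there are constants $M_{2i-1},M_{2i}<\infty$ with
\[\frac{f_{2i-1}(t-u)}{t^p}\mathbbm{1}_{[0,t)}(u)\le M_{2i-1},\qquad \frac{f_{2i}(t+\tau-u)}{(t+\tau)^q}\mathbbm{1}_{[0,t+\tau)}(u)\le M_{2i},\]
valid for all $u\in\mathbbm{R}$, $\tau\ge0$, and $t$ beyond a common threshold obtained by taking the maximum of the finitely many individual thresholds. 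Here local boundedness of each $f_i$ controls the contribution on the compact initial interval while the rates $a_{2i-1},a_{2i}$ control the tail, exactly as in the lemma. Summing over $i$ and using $|f(t,\tau)|\le\sum_{i=1}^n f_{2i-1}(t)f_{2i}(t+\tau)$ gives
\[\frac{|f(t-u,\tau)|}{t^p(t+\tau)^q}\mathbbm{1}_{[0,t)}(u)\le\sum_{i=1}^n M_{2i-1}M_{2i}=:M<\infty,\]
so the integrand is bounded by a single $U$-integrable constant (as $U$ is finite), uniformly in $u$, $\tau$, and large $t$.

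With this domination secured and the pointwise uniform limit $c_1$ unchanged, Proposition~\ref{cor:DCT} applies to $K(t,\tau)/[t^p(t+\tau)^q]=\int_0^t \{f(t-u,\tau)/[t^p(t+\tau)^q]\}\,U(du)$ and yields \eqref{eq:result_corollary} uniformly for $\tau\ge0$, with limit $c_1\,U([0,\infty))=c_1/(1-\rho([0,\infty)))$. I do not expect a genuine obstacle here: the generalisation is mild precisely because $n$ is finite, so a finite sum of constant bounds is again a constant. The one point that must be respected — and the reason the statement is phrased via linearity of integration rather than a trivial appeal to the theorem — is that the hypothesis bounds $|f|$, not $f$ itself, by a sum; one therefore cannot split $f$ into $n$ summands and invoke Theorem~\ref{thm:renewal theorem} termwise, and must instead redo the single domination step with the aggregated constant $M$ as above.
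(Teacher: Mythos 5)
Your proof is correct and is essentially the paper's own argument: the paper dispatches this corollary in a single line (``thanks to the linearity of integration''), and the intended justification is precisely what you spell out --- rerun the domination step of Lemma~\ref{lem:convergenza uniforme} for each of the finitely many products to get constants $M_{2i-1}M_{2i}$, sum them into a single constant $M$ dominating $|f(t-u,\tau)|/[t^p(t+\tau)^q]\,\mathbbm{1}_{[0,t)}(u)$, and then apply Proposition~\ref{cor:DCT} to the representation $K(t,\tau)=\int_0^t f(t-u,\tau)\,U(du)$ with the finite measure $U$. Your closing observation that one cannot split $f$ itself into summands and apply Theorem~\ref{thm:renewal theorem} termwise (since only $|f|$, not $f$, is bounded by the sum) is accurate and correctly identifies why the aggregated-constant route is the right reading of the paper's hint.
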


\subsection{Mean square convergence }\label{ch:mean square convergence}

Equation~\eqref{eq:E(G(t))} states that $\mathbb{E}(\WGtime{t}) \to
1$. A natural question that this result rises is whether there exists a
non-negative random variable $\WG$, s.t. $\mathbb{E}(\WG)=1$, to
which $\WGtime{t}$ converges in mean. Studying the behaviour of the
second moment of $\WGtime{t}$, in Theorem~\ref{th:L2 convergence},
the main result of the section, we will prove something stronger
than that: the convergence is true also in $L^2$. To achieve that
we will need a version, stated in Proposition~\ref{prop:F}, of one
of the results presented in~\cite{weber2016inferring} concerning
the Probability Generating Function (PGF) of $(G(t),Z(t))$, that
better fits our purpose. We use it in Lemmas~\ref{lem:Z(t+tau)G(t)}
and~\ref{lem:G(t+tau)G(t)} where a study of the covariance between
$G(t)$ and $Z(t)$, and of the relation between  different terms of the
total generation process is made. This will lead us to
Corollary~\ref{cor:Cauchy}, which allows us to finally prove
Theorem~\ref{th:L2 convergence}.

\begin{proposition}[A reformulation of Theorem 2 of \cite{weber2016inferring}]\label{prop:F}
For $s_1, s_2, r_1, r_2, t, \tau \in \mathbb{R}_{\geq0}$,  define $F(s_1,s_2,r_1,r_2,t,\tau) := \mathbb{E}(s_1^{G(t)} s_2^{G(t+ \tau)} r_1^{Z(t)} r_2^{Z(t+\tau)})$. Then,  we have
\begin{align} \label{eq:probability generating function}
    F(s_1,s_2,r_1,r_2,t,\tau) =&r_1r_2\mathbb{P}(L>t + \tau) + r_1\int_t^{t+\tau} \rho_N\left(\mathbb{E}\left( s_2^{G(t+\tau-u)} (s_2r_2)^{Z(t+\tau-u)}\right)\right) d\mathbb{P}(L\leq u) \notag\\
    &+ \int_0^{t} \rho_N\Big(F(s_1,s_2,s_1r_1,s_2r_2,t-u,\tau)\Big) d\mathbb{P}(L\leq u),
\end{align}
where $\rho_N(s)=\mathbbm{E}(s^N)$, the probability generating function of the offspring number, $N$.
\end{proposition}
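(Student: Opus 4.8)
The plan is to derive~\eqref{eq:probability generating function} by a first-step (renewal) decomposition, conditioning on the lifetime $L$ and the offspring number $N$ of the initial progenitor and then invoking the branching property of the process. The progenitor begins at time $0$ in generation $0$ and is the unique living cell until it dies at time $L$, whereupon it is replaced by $N$ conditionally i.i.d. copies of the whole process, each rooted at a generation-$1$ cell. First I would partition the sample space according to where $L$ falls relative to the two observation epochs $t$ and $t+\tau$: either (i) $L>t+\tau$, (ii) $t<L\leq t+\tau$, or (iii) $L\leq t$. These three events correspond precisely to the three summands on the right-hand side.

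On event (i) the progenitor is alive at both $t$ and $t+\tau$, so $Z(t)=Z(t+\tau)=1$ and $G(t)=G(t+\tau)=0$; the integrand reduces to $r_1r_2$, and weighting by $\mathbb{P}(L>t+\tau)$ produces the first term. On events (ii) and (iii) the progenitor has died and spawned $N$ subtrees, and the crucial bookkeeping is that a cell sitting at generation $k$ within a subtree is at generation $k+1$ in the full tree. Writing $G',Z'$ for the quantities of a fresh process, the contribution of a subtree to the total generation is therefore $G'+Z'$ rather than $G'$. In generating-function terms this offset converts a factor $s^{G'}r^{Z'}$ into $s^{G'}(sr)^{Z'}$, which is exactly the mechanism that carries the arguments $(r_1,r_2)$ into $(s_1r_1,s_2r_2)$ in the recursive term.

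With that identity in hand I would compute the conditional expectation of $s_1^{G(t)}s_2^{G(t+\tau)}r_1^{Z(t)}r_2^{Z(t+\tau)}$ given $\{L=u\}$ and $N$ on each event. On event (ii), at time $t$ only the progenitor is alive, contributing a factor $r_1$, while at time $t+\tau$ each of the $N$ subtrees has aged $t+\tau-u$ and contributes an independent factor $\mathbb{E}(s_2^{G(t+\tau-u)}(s_2r_2)^{Z(t+\tau-u)})$; independence collapses the $N$-fold product into $\rho_N$, and integrating $d\mathbb{P}(L\leq u)$ over $(t,t+\tau]$ gives the second term. On event (iii) each subtree has aged $t-u$ by time $t$ and $t+\tau-u$ by time $t+\tau$, so its contribution is $s_1^{G(t-u)}s_2^{G(t-u+\tau)}(s_1r_1)^{Z(t-u)}(s_2r_2)^{Z(t-u+\tau)}$, whose expectation is by definition $F(s_1,s_2,s_1r_1,s_2r_2,t-u,\tau)$; the same apply-$\rho_N$-and-integrate step over $[0,t]$ yields the recursive third term.

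The main obstacle, and the step I would treat most carefully, is making the branching property rigorous \emph{jointly} at the two epochs $t$ and $t+\tau$: one must argue that, conditioned on the progenitor's death time and offspring count, the $N$ subtrees are jointly independent copies of the original process and that their joint configurations at the two times factorise, so that the product over subtrees genuinely reduces to a single evaluation of $\rho_N$ at the joint two-time generating function. This is justified by the standard construction of the process, e.g.~\cite[Chapter VI.2]{harris1963theory}. Once the joint independence and the generation-offset bookkeeping are pinned down, reassembling the three contributions is routine.
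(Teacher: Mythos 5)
Your proposal is correct. Note that the paper itself gives no proof of this proposition: it is stated as a reformulation of Theorem 2 of \cite{weber2016inferring} and used directly. Your argument --- first-step decomposition on the progenitor's lifetime relative to $t$ and $t+\tau$, with the key bookkeeping that a subtree rooted at a generation-$1$ cell contributes $G'+Z'$ to the total generation, so that $s^{G'}r^{Z'}$ becomes $s^{G'}(sr)^{Z'}$ and the arguments $(r_1,r_2)$ shift to $(s_1r_1,s_2r_2)$ --- is exactly the standard derivation underlying the cited result, and your appeal to the construction in \cite[Chapter VI.2]{harris1963theory} is the right way to justify the joint (two-epoch) independence of the subtrees.
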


Using Proposition~\ref{prop:F}, we analyse the limiting behaviour of the covariance between $Z(t)$ and $G(t)$.

\begin{lemma}[Limit behaviour of the covariance of $\WGtime{t}$ and $\WZtime{t}$]\label{lem:Z(t+tau)G(t)}
Using the previous notation, we have
\begin{equation}\label{eq:V_tW_t}
    \lim_{t \to \infty} \mathbbm{E}(\WGtime{t}\WZtime{t+\tau})= k = \lim_{t \to \infty} \mathbbm{E}(\WGtime{t+\tau}\WZtime{t}) \qquad \text{ uniformly for $\tau \geq 0$,}
\end{equation}
where $k$ is defined in~\eqref{eq:k}.
\end{lemma}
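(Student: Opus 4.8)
The plan is to reduce the statement to the asymptotics of the two unnormalised mixed moments $M(t,\tau):=\mathbb{E}(G(t)Z(t+\tau))$ and $N(t,\tau):=\mathbb{E}(G(t+\tau)Z(t))$. Indeed, from the definitions in \eqref{eq:W_and_V},
$$\mathbb{E}(\WGtime{t}\WZtime{t+\tau})=\frac{M(t,\tau)}{c^2 h\alpha'\,t\,e^{2\alpha t+\alpha\tau}}\quad\text{and}\quad \mathbb{E}(\WGtime{t+\tau}\WZtime{t})=\frac{N(t,\tau)}{c^2 h\alpha'\,(t+\tau)\,e^{2\alpha t+\alpha\tau}},$$
so it is enough to prove $M(t,\tau)/(t\,e^{2\alpha t+\alpha\tau})\to c^2 h\alpha' k$ and $N(t,\tau)/((t+\tau)\,e^{2\alpha t+\alpha\tau})\to c^2 h\alpha' k$, each uniformly in $\tau\ge 0$.

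First I would produce renewal equations for $M$ and $N$ by differentiating the functional equation \eqref{eq:probability generating function}. For $M$ I apply $\partial_{s_1}\partial_{r_2}$ and set $s_1=s_2=r_1=r_2=1$: the first two terms on the right of \eqref{eq:probability generating function} carry no $s_1$ (there the root is still alive and $G(t)\equiv 0$), so they drop, while the chain rule through the generation-shift substitutions $r_1\mapsto s_1r_1$, $r_2\mapsto s_2r_2$ inside $\rho_N$, together with $\rho_N(1)=1$, $\rho_N'(1)=h$, $\rho_N''(1)=v$, turns the last term into
$$M(t,\tau)=\int_0^t\!\Big(v\,[\mathbb{E}(G(t-u))+\mathbb{E}(Z(t-u))]\,\mathbb{E}(Z(t-u+\tau))+h\,\mathbb{E}(Z(t-u)Z(t-u+\tau))\Big)\,d\mathbb{P}(L\le u)+h\!\int_0^t\! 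M(t-u,\tau)\,d\mathbb{P}(L\le u).$$
Differentiating instead by $\partial_{s_2}\partial_{r_1}$ gives the analogous equation for $N$, with the roles of $t-u$ and $t-u+\tau$ swapped in the forcing and one extra boundary term arising from the middle integral of \eqref{eq:probability generating function}.

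The key step is to recast these in the defective-renewal form of Theorem~\ref{thm:renewal theorem}. Writing $K(t,\tau):=e^{-2\alpha t-\alpha\tau}M(t,\tau)$ and $\rho(du):=h\,e^{-2\alpha u}\,d\mathbb{P}(L\le u)$, the self-convolution becomes $\int_0^t K(t-u,\tau)\,\rho(du)$, and since $\alpha>0$ the Malthusian identity $h\,\mathbb{E}(e^{-\alpha L})=1$ forces $\rho([0,\infty))=h\int_0^\infty e^{-2\alpha u}\,d\mathbb{P}(L\le u)<1$, so $\rho$ is genuinely defective. The rescaled forcing is a convolution of the finite measure $e^{-2\alpha u}d\mathbb{P}(L\le u)$ against products of $\mathbb{E}(G)$, $\mathbb{E}(Z)$ and $\mathbb{E}(ZZ)$; using the known means $\mathbb{E}(Z(s))\sim c\,e^{\alpha s}$ and $\mathbb{E}(G(s))\sim c\,h\alpha'\,s\,e^{\alpha s}$, its dominant piece is $v\,\phi$ with $\phi(s,\tau):=e^{-2\alpha s-\alpha\tau}\mathbb{E}(G(s))\mathbb{E}(Z(s+\tau))$ obeying $\phi(s,\tau)/s\to c^2 h\alpha'$. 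Lemma~\ref{lem:convergenza uniforme} then gives, uniformly in $\tau$, that the rescaled forcing divided by $t$ tends to $c_1:=c^2 h\alpha' v\int_0^\infty e^{-2\alpha u}\,d\mathbb{P}(L\le u)$, the residual $\mathbb{E}(ZZ)$ term being $O(1)$ and (for $N$) the boundary term being $o(1)$ since it carries the vanishing tail $\int_t^\infty e^{-\alpha u}\,d\mathbb{P}(L\le u)$. With $p=1,q=0$ for $M$ and $p=0,q=1$ for $N$ (the $t+\tau$ weight appearing because the dominant $N$-forcing is $v\,\mathbb{E}(G(t-u+\tau))\mathbb{E}(Z(t-u))$), Theorem~\ref{thm:renewal theorem} yields $K(t,\tau)/t\to c_1/(1-\rho([0,\infty)))$ uniformly; substituting the value of $k$ from \eqref{eq:k} collapses this to $c^2 h\alpha' k$, and dividing by $c^2 h\alpha'$ gives both limits.

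The main obstacle is the differentiation bookkeeping: correctly propagating the chain rule through the substitutions $r_i\mapsto s_ir_i$ so that the coupling cross-term $v[\mathbb{E}(G)+\mathbb{E}(Z)]\mathbb{E}(Z)$ and the $h\,\mathbb{E}(ZZ)$ term emerge with the right coefficients (this generation shift is exactly the mechanism that ties $G$ to $Z$), and then certifying uniformity in $\tau$ of the forcing's limit — in particular that the subleading $\mathbb{E}(ZZ)$ and boundary contributions really are negligible relative to the $t$- (resp.\ $(t+\tau)$-) growth after the exponential rescaling, which is precisely what the uniform Dominated Convergence machinery of Proposition~\ref{cor:DCT} and Lemma~\ref{lem:convergenza uniforme} is built to guarantee.
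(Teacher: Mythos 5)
Your proposal is correct and follows essentially the same route as the paper: differentiate the functional equation \eqref{eq:probability generating function} ($\partial_{s_1}\partial_{r_2}$ for one moment, $\partial_{s_2}\partial_{r_1}$ for the other), rescale by $e^{-2\alpha t-\alpha\tau}$ to obtain a renewal equation driven by the defective measure $h e^{-2\alpha u}\,d\mathbb{P}(L\le u)$, verify the forcing asymptotics uniformly in $\tau$ via Lemma~\ref{lem:convergenza uniforme}, and invoke Theorem~\ref{thm:renewal theorem} (with Corollary~\ref{cor:1}) to identify the limit as $c^2 h\alpha' k$. Your treatment of the second mixed moment (the extra boundary term from the middle integral and the switch to $p=0$, $q=1$) correctly fills in details the paper dismisses as ``similar.''
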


\begin{proof}
We prove only the first of the equalities in~\eqref{eq:V_tW_t} as the other one can be obtained in a similar way.

Consider the integral equation~\eqref{eq:probability generating function} and take the derivative first for $s_1$, secondly for $r_2$, and then evaluate it at $(1,1,1,1,t,\tau)$. We obtain that
\begin{align*}
    \mathbbm{E}(G(t)Z(t+\tau))=& v\int_0^t \left[\mathbbm{E}(G(t-u))\mathbbm{E}(Z(t+\tau -u)) + \mathbbm{E}(Z(t-u))\mathbbm{E}(Z(t+ \tau -u))\right]d\mathbb{P}(L\leq u) \notag \\
    &+h \int_0^t \mathbbm{E}(Z(t-u)Z(t+\tau -u))d\mathbb{P}(L\leq u) \notag \\
    &+ h \int_0^t \mathbbm{E}(G(t-u)Z(t+\tau-u))d\mathbb{P}(L\leq u),
\end{align*}
where we recall that $h=\mathbbm{E}(N)$ and $v=\mathbbm{E}(N(N-1))$.
Multiplying both sides of this equation by $e^{-\alpha t}e^{-\alpha (t+\tau)}$, and denoting 
\begin{align*}
    K(t,\tau) :=\frac{\mathbb{E}(G(t)Z(t+ \tau))}{e^{\alpha t}e^{\alpha (t+\tau)}}, \quad
    d\mathbb{\overline{P}}(L\leq u):= h e^{- 2 \alpha u} d\mathbb{P}(L\leq u), \quad
    d\mathbb{P}'(L\leq u):=v e^{-2\alpha u}d\mathbb{P}(L\leq u),
\end{align*}
\begin{align}
    f(t, \tau):= &\int_0^t \left[\frac{\mathbbm{E}(G(t-u))}{e^{\alpha (t-u)}}\frac{\mathbbm{E}(Z(t+\tau -u))}{e^{\alpha (t + \tau -u)}} + \frac{\mathbbm{E}(Z(t-u))}{e^{\alpha (t-u)}}\frac{\mathbbm{E}(Z(t + \tau -u))}{e^{\alpha (t + \tau -u)}}\right]d\mathbb{P}'(L\leq u) \notag \\
    &+ \int_0^t \frac{\mathbbm{E}(Z(t-u)Z(t+\tau -u))}{e^{\alpha (t-u)}e^{\alpha (t+\tau - u)}}d\mathbb{\overline{P}}(L\leq u), \label{eq:f_1}
\end{align}
we have that
\begin{equation}\label{eq:K(t,tau)_1}
   K(t,\tau)=f(t,\tau) + \int_0^t K(t-u, \tau) d\mathbb{\overline{P}}(L\leq u). 
\end{equation}

Observe that $\mathbb{\overline{P}}$ is a defective measure. In fact, 
\begin{equation}\label{eq:P_difective}
    \int_{0}^{+\infty} d\mathbb{\overline{P}}(L\leq u)= h  \int_{0}^{+\infty} e^{- 2 \alpha u} d\mathbb{P}(L\leq u) < h  \int_{0}^{+\infty} e^{- \alpha u} d\mathbb{P}(L\leq u) \overset{\eqref{eq:malthus}}{=} 1.
\end{equation}
As $\mathbbm{E}(\WGtime{t}\WZtime{t+\tau})=\mathbbm{E}(G(t)Z(t+\tau))/[h
\alpha' c^2 te^{\alpha t} e^{\alpha (t+\tau)}]$, in order to conclude
the proof, we would like to apply Theorem~\ref{thm:renewal theorem}
at~\eqref{eq:K(t,tau)_1} with $p=1$ and $q=0$. So, we need to prove
that the hypotheses on $f(t,\tau)$ are verified.

Note that $f(t,\tau)$ is the sum of three integrals, where each integrand,
divided by $t$, converges to a constant when
$t\to \infty$, uniformly for $\tau \geq 0$  (see~\eqref{eq:E(G(t))},\eqref{eq:Z(t)}, and~\cite[pg.
145]{harris1963theory}). Furthermore, each of these integrands is
dominated by the product of two locally bounded functions (the
moments of $Z(t)$ and $G(t)$ are locally bounded solutions of
integral equations of the type in equation~\eqref{eq:K(t,tau)_1}, see~\cite[pg. 142]{harris1963theory} and \cite[Theorem 2]{weber2016inferring}), one depending on
$t$ and another one depending on $t+\tau$ (for the last integrand,
use the Cauchy-Schwartz inequality to see it). As these
dominant functions satisfy the hypotheses of Lemma~\ref{lem:convergenza
uniforme} with $p=1$ and $q=0$ (see~\eqref{eq:E(G(t))} and
\eqref{eq:Z(t)}), we can conclude that
\begin{align*}
    \lim_{t \to \infty} \frac{f(t, \tau)}{t }= h \alpha' c^2\int_0^\infty d\mathbb{P}'(L\leq u)
    = h \alpha' c^2 v \int_0^\infty e^{-2\alpha u}d\mathbb{P}(L\leq u) \quad \text{uniformly for $\tau \geq 0$.} 
\end{align*}

Moreover, if we consider the first of the integrals in~\eqref{eq:f_1} and apply the Cauchy-Schwartz inequality, we obtain
\begin{align*}
    \int_0^t \frac{\mathbb{E}(G(t-u))}{e^{\alpha(t-u)}}&\frac{\mathbb{E}(Z(t+\tau-u))}{e^{\alpha(t+\tau -u)}}d\mathbb{P}'(L\leq u) \notag \\
    \leq \left(\int_0^t \left|\frac{\mathbb{E}(G(t-u))}{e^{\alpha(t-u)}}\right|^2d\mathbb{P}'(L\leq u)\right)^{1/2}  & \left(\int_0^{t+\tau} \left|\frac{\mathbb{E}(Z(t+\tau-u))}{e^{\alpha(t+\tau -u)}}\right|^2d\mathbb{P}'(L\leq u)\right)^{1/2}
     =:f_1(t)f_2(t+\tau),
\end{align*}
with $f_1(t)$ and $f_2(t)$ satisfying the hypotheses of Theorem~\ref{thm:renewal theorem}. As the same reasoning holds for the other integrals in~\eqref{eq:f_1} (for the last integral we use Cauchy-Schwartz inequality twice), thanks to Theorem~\ref{thm:renewal theorem}, with $p=1$ and $q=0$, and Corollary~\ref{cor:1} we obtain 
\begin{equation*}
    \lim_{t\to \infty}\frac{K(t)}{t}=\frac{h \alpha' c^2 v \int_0^\infty e^{-2\alpha u}d\mathbb{P}(L\leq u)}{1- h\int_0^\infty e^{-2\alpha u}d\mathbb{P}(L\leq u)}\quad \text{ uniformly for $\tau \geq 0$.}
\end{equation*}
Recalling the definition of $k$, $\WGtime{t}$, and $\WZtime{t+\tau}$
at~\eqref{eq:k} and~\eqref{eq:W_and_V}, we have completed the proof
of the first inequality in~\eqref{eq:V_tW_t}.
\end{proof}

We now study the covariance between the total generation process at two distinct
times, for which we will need to use Lemma~\ref{lem:Z(t+tau)G(t)}.

\begin{lemma}[Limit behaviour of the covariance of $\WGtime{t}$ and $\WGtime{t+\tau}$]\label{lem:G(t+tau)G(t)}
Using the previous notation, we have
\begin{equation*}
    \lim_{t \to \infty}\mathbb{E}(\WGtime{t+\tau}\WGtime{t})=  k \qquad \text{ uniformly for $\tau \geq 0$, }
\end{equation*}
where $k$ is defined in~\eqref{eq:k}.
\end{lemma}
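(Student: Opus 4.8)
The plan is to follow the template of Lemma~\ref{lem:Z(t+tau)G(t)} almost verbatim, the only essential differences being that both factors now carry a linear-in-time marginal, so the Key Renewal Theorem is applied with $p=q=1$, and that the relevant forcing term is isolated by a power-counting argument rather than being read off directly.

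First I would differentiate the functional equation~\eqref{eq:probability generating function} of Proposition~\ref{prop:F} with respect to $s_1$ and $s_2$ and evaluate at $(1,1,1,1,t,\tau)$. Since neither the term $r_1r_2\mathbb{P}(L>t+\tau)$ nor the term $r_1\int_t^{t+\tau}\rho_N(\cdots)\,d\mathbb{P}(L\le u)$ depends on $s_1$, only the last integral survives, and the chain rule (using $\rho_N'(1)=h$, $\rho_N''(1)=v$, and that the inner $F$ equals $1$ at the evaluation point) yields
\begin{align*}
\mathbb{E}(G(t)G(t+\tau))
&= v\int_0^t \big(\mathbb{E}(G(t-u))+\mathbb{E}(Z(t-u))\big)\big(\mathbb{E}(G(t+\tau-u))+\mathbb{E}(Z(t+\tau-u))\big)\,d\mathbb{P}(L\le u)\\
&\quad + h\int_0^t \big[\mathbb{E}(G(t-u)G(t+\tau-u)) + \mathbb{E}(G(t+\tau-u)Z(t-u)) \\
&\qquad\qquad + \mathbb{E}(G(t-u)Z(t+\tau-u)) + \mathbb{E}(Z(t-u)Z(t+\tau-u))\big]\,d\mathbb{P}(L\le u).
\end{align*}
Multiplying by $e^{-\alpha t}e^{-\alpha(t+\tau)}$ and setting $K(t,\tau):=\mathbb{E}(G(t)G(t+\tau))e^{-\alpha t}e^{-\alpha(t+\tau)}$ together with the defective measure $d\overline{\mathbb{P}}(L\le u):=h e^{-2\alpha u}\,d\mathbb{P}(L\le u)$ of~\eqref{eq:P_difective}, the term $h\int_0^t\mathbb{E}(G(t-u)G(t+\tau-u))\,d\mathbb{P}$ becomes exactly $\int_0^t K(t-u,\tau)\,d\overline{\mathbb{P}}(L\le u)$, so $K$ satisfies a renewal equation of the form~\eqref{eq:integral equation} with the same defective measure as in Lemma~\ref{lem:Z(t+tau)G(t)} and a forcing term $f(t,\tau)$ collecting the remaining integrals.

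The heart of the argument is then to show that $f(t,\tau)/[t(t+\tau)]$ converges, uniformly in $\tau$, to $v(h\alpha'c)^2\int_0^\infty e^{-2\alpha u}\,d\mathbb{P}(L\le u)$. Here the power counting does the work: by~\eqref{eq:E(G(t))} the renormalised mean $\mathbb{E}(G(s))/e^{\alpha s}$ grows like $h\alpha'c\,s$, whereas by~\eqref{eq:Z(t)} $\mathbb{E}(Z(s))/e^{\alpha s}$ stays bounded, by Lemma~\ref{lem:Z(t+tau)G(t)} the mixed moments $\mathbb{E}(G(\cdot)Z(\cdot))e^{-\alpha(\cdot)-\alpha(\cdot)}$ carry only a single power of time, and the second moments $\mathbb{E}(Z(\cdot)Z(\cdot))e^{-\alpha(\cdot)-\alpha(\cdot)}$ carry none. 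Thus after dividing by $t(t+\tau)$ every contribution that contains a $Z$ factor in place of a $G$ loses at least one power of time and vanishes; the unique surviving term is the product $\mathbb{E}(G(t-u))\mathbb{E}(G(t+\tau-u))$ inside the $v$-integral, which contributes $(h\alpha'c)^2(t-u)(t+\tau-u)/[t(t+\tau)]\to(h\alpha'c)^2$ pointwise. For the uniformity I would, exactly as in Lemma~\ref{lem:Z(t+tau)G(t)}, dominate each integrand by a product $f_1(t)f_2(t+\tau)$ of locally bounded functions with the required growth rates — using the Cauchy--Schwarz inequality (twice on the $\mathbb{E}(Z\,Z)$ and on each cross term) to split the genuinely two-point moments — so that Lemma~\ref{lem:convergenza uniforme} and Corollary~\ref{cor:1} apply with $p=q=1$, the negligible terms entering merely with vanishing coefficients.

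Finally I would invoke Theorem~\ref{thm:renewal theorem} with $p=q=1$ to obtain
\[
\lim_{t\to\infty}\frac{K(t,\tau)}{t(t+\tau)}=\frac{v(h\alpha'c)^2\int_0^\infty e^{-2\alpha u}\,d\mathbb{P}(L\le u)}{1-h\int_0^\infty e^{-2\alpha u}\,d\mathbb{P}(L\le u)}=(h\alpha'c)^2 k
\]
uniformly in $\tau$, the last equality being just the definition of $k$ in~\eqref{eq:k}. Dividing by the normalisation constants in~\eqref{eq:W_and_V}, namely $\mathbb{E}(\WGtime{t+\tau}\WGtime{t})=K(t,\tau)/[(ch\alpha')^2 t(t+\tau)]$, the factor $(h\alpha'c)^2$ cancels and the limit equals $k$, as claimed. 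The main obstacle, as in the previous lemma, is the bookkeeping of the second derivative of the generating-function equation and the verification that all domination hypotheses of Theorem~\ref{thm:renewal theorem} hold uniformly in $\tau$; the conceptual crux is the observation that the $p=q=1$ normalisation annihilates every term except the $G\cdot G$ product, which is precisely what forces the answer to coincide with the covariance constant $k$.
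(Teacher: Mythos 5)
Your computation of the second mixed derivative of \eqref{eq:probability generating function}, the resulting renewal equation with the defective measure $d\overline{\mathbb{P}}(L\leq u)=he^{-2\alpha u}d\mathbb{P}(L\leq u)$, and the identification of the limiting constant $(h\alpha'c)^2k$ all match the paper's proof. However, there is one genuine gap: the domination step cannot be carried out ``exactly as in Lemma~\ref{lem:Z(t+tau)G(t)}''. In that lemma the only genuine two-point moment in the forcing term is $\mathbb{E}(Z(t-u)Z(t+\tau-u))$, and the inner Cauchy--Schwarz bound produces $\sqrt{\mathbb{E}(Z(\cdot)^2)}$, whose growth $\mathbb{E}(Z(s)^2)=O(e^{2\alpha s})$ is known from Harris. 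Here, by contrast, the forcing term \eqref{eq:f} contains the cross terms $\mathbb{E}(G(t+\tau-u)Z(t-u))$ and $\mathbb{E}(Z(t+\tau-u)G(t-u))$, and applying Cauchy--Schwarz inside the expectation produces the factor $\sqrt{\mathbb{E}(G(\cdot)^2)}$. For the hypotheses of Theorem~\ref{thm:renewal theorem} and Corollary~\ref{cor:1} with $p=q=1$ you need the resulting dominating function to be locally bounded and to grow linearly after the integral-level Cauchy--Schwarz, which requires $\mathbb{E}(G(s)^2)=O(s^2e^{2\alpha s})$ --- but that is precisely the $\tau=0$ instance of the statement you are proving. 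None of the results you invoke (\eqref{eq:E(G(t))}, \eqref{eq:Z(t)}, Lemma~\ref{lem:Z(t+tau)G(t)}, Harris) controls $\mathbb{E}(G^2)$: Lemma~\ref{lem:Z(t+tau)G(t)} controls mixed $GZ$ moments only, so as written the appeal to Corollary~\ref{cor:1} is unjustified.

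The paper closes this gap with a bootstrap that your proposal is missing: before treating general $\tau$, it sets $\tau=0$ in the renewal equation, where the forcing term $f(t,0)$ in \eqref{eq:f(tau,0)} involves only equal-time quantities whose growth is already known ($\mathbb{E}(G)$, $\mathbb{E}(Z)$, $\mathbb{E}(GZ)$ from \eqref{eq:E(G(t))}, \eqref{eq:Z(t)}, \eqref{eq:k}, and $\mathbb{E}(Z^2)$ from Harris), and applies the non-uniform first part of Theorem~\ref{thm:renewal theorem} with $p=2$, $q=0$ to obtain $\lim_{t\to\infty}\mathbb{E}(G(t)^2)/(t^2e^{2\alpha t})=(h\alpha'c)^2k$, equation \eqref{eq:E(G^2)}. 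Only with this in hand do the Cauchy--Schwarz dominations of the cross terms satisfy the required growth hypotheses, after which the uniform-in-$\tau$ application with $p=q=1$ goes through exactly as you describe. Inserting this intermediate $\tau=0$ step makes your argument coincide with the paper's proof; without it, the verification of the domination hypotheses is circular.
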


\begin{proof}
The proof is similar to that in Lemma~\ref{lem:Z(t+tau)G(t)}, so
some details are omitted.

If we take the derivative of equation~\eqref{eq:probability generating function} first for $s_1$, secondly for $s_2$, and then evaluate it at $(1,1,1,1,t,\tau)$, we obtain
\begin{align} \label{eq:media quadratica}
    \mathbb{E}(G(t+ \tau)G(t))=& v \int_0^t \mathbb{E}(G(t+\tau-u))\mathbb{E}(G(t-u))d\mathbb{P}(L\leq u) \notag\\
    & +v\int_0^t \Big[\mathbb{E}(Z(t+ \tau-u))\mathbb{E}(Z(t-u)) + \mathbb{E}(G(t+\tau-u))\mathbb{E}(Z(t-u)) \notag\\
    &+ \mathbb{E}(Z(t+ \tau-u))\mathbb{E}(G(t-u)) \Big]d\mathbb{P}(L\leq u) \notag \\
    & +h\int_0^t \Big[\mathbb{E}\big(G(t+\tau-u)Z(t-u)\big)  + \mathbb{E}\big(Z(t+ \tau-u)G(t-u)\big) \notag\\ 
    &+ \mathbb{E}\big(Z(t+ \tau-u)Z(t-u)\big)\Big]d\mathbb{P}(L\leq u) \notag\\
    & +h\int_0^t \mathbb{E}(G(t+\tau-u)G(t-u))d\mathbb{P}(L\leq u). 
\end{align}
Multiplying both sides of this equation by $e^{-\alpha t}e^{-\alpha (t+\tau)}$ and denoting 
\begin{align*}
    K(t,\tau) :=\frac{\mathbb{E}(G(t+ \tau)G(t))}{e^{\alpha t}e^{\alpha (t+\tau)}}, \quad
    d\mathbb{\overline{P}}(L\leq u):= h e^{- 2 \alpha u} d\mathbb{P}(L\leq u), \quad
    d\mathbb{P}'(L\leq u):=v e^{-2\alpha u}d\mathbb{P}(L\leq u),
\end{align*}
\begin{align}
    f(t, \tau):= & \int_0^t \frac{\mathbb{E}(G(t+\tau-u))}{e^{\alpha(t+\tau -u)}}\frac{\mathbb{E}(G(t-u))}{e^{\alpha(t-u)}}d\mathbb{P}'(L\leq u) \notag \\
    &+ \int_0^t \Big[\frac{\mathbb{E}(Z(t+ \tau-u))}{e^{\alpha(t+\tau -u)}}\frac{\mathbb{E}(Z(t-u))}{e^{\alpha(t-u)}} + \frac{\mathbb{E}(G(t+\tau-u))}{e^{\alpha(t+\tau -u)}}\frac{\mathbb{E}(Z(t-u))}{e^{\alpha(t-u)}} \notag\\
    &+ \frac{\mathbb{E}(Z(t+ \tau-u))}{e^{\alpha(t+\tau -u)}}\frac{\mathbb{E}(G(t-u))}{e^{\alpha(t-u)}} \Big]d\mathbb{P}'(L\leq u) \notag \\
    & +\int_0^t \Big[\frac{\mathbb{E}\big(G(t+\tau-u)Z(t-u)\big)}{e^{\alpha(t+\tau -u)}e^{\alpha(t-u)}}  +\frac{\mathbb{E}\big(Z(t+ \tau-u)G(t-u)\big)}{e^{\alpha(t+\tau -u)}e^{\alpha(t-u)}}  \notag\\ 
    &+ \frac{\mathbb{E}\big(Z(t+ \tau-u)Z(t-u)\big)}{e^{\alpha(t+\tau -u)}e^{\alpha(t-u)}}\Big]d\mathbb{\overline{P}}(L\leq u), \label{eq:f}
\end{align}
 we have that
\begin{equation}\label{eq:K(t,tau)}
   K(t,\tau)=f(t,\tau) + \int_0^t K(t-u, \tau) d\mathbb{\overline{P}}(L\leq u). 
\end{equation}

As already observed in~\eqref{eq:P_difective}, $\mathbb{\overline{P}}$ is a defective measure. 
In order to conclude the proof, we would like to apply Theorem~\ref{thm:renewal theorem} to~\eqref{eq:K(t,tau)}, and so we need to prove that the hypotheses on $f(t,\tau)$ are verified.
This will be easier by proving a weaker version of Lemma~\ref{lem:G(t+tau)G(t)} which states that $\lim_{t\to \infty}\mathbb{E}(G(t)^2)/[t^2e^{2\alpha t}] = (h \alpha' c)^2 k$. This result, that now we prove, is obtained applying the first part of Theorem~\ref{thm:renewal theorem} to~\eqref{eq:K(t,tau)}, when $\tau=0$.

For $\tau=0$,  we have that $K(t,0)=\mathbb{E}(G(t)^2)/e^{2\alpha t}$ and

\begin{align}\label{eq:f(tau,0)} 
    f(t,0)=& \int_0^t \left[\frac{\mathbb{E}(G(t-u))^2}{e^{2\alpha (t-u)}} +\frac{\mathbb{E}(Z(t-u))^2}{e^{2\alpha (t-u)}} +2 \frac{\mathbb{E}(G(t-u))}{e^{\alpha (t-u)}}\frac{\mathbb{E}(Z(t-u))}{e^{\alpha (t-u)}}\right] d\mathbb{P}'(L\leq u) \notag \\
    & +\int_0^t \left[2\frac{\mathbb{E}\big(G(t-u)Z(t-u)\big)}{e^{2\alpha (t-u)}} + \frac{\mathbb{E}(Z(t-u)^2)}{e^{2\alpha (t-u)}}\right]d\mathbb{\overline{P}}(L\leq u).
\end{align}
Notice that all five terms inside the integrals in~\eqref{eq:f(tau,0)} are
locally bounded in $t$ (the moments and the covariance of $Z(t)$
and $G(t)$ are locally bounded solutions of integral equations of
the type~\eqref{eq:K(t,tau)}, see \cite[Theorem 2]{weber2016inferring}) and, divided by $t^2$, they converge to
constants. So, we can use Lemma~\ref{lem:convergenza
uniforme} with $p=2$ and $q=0$, obtaining

\begin{equation}\label{eq:f(t,0)/t^2}
    \lim_{t\to \infty}\frac{f(t,0)}{t^2}=(h \alpha' c)^2\int_0^\infty d\mathbb{P}'(L\leq u)=(h \alpha' c)^2 v \int_0^\infty e^{-2\alpha u}d\mathbb{P}(L\leq u).
\end{equation}
As $f(t,0)$ is locally bounded in $t$ (it is finite sum of convolutions of locally bounded functions), equation \eqref{eq:f(t,0)/t^2} allows us to apply Theorem~\ref{thm:renewal theorem} obtaining
\begin{equation}\label{eq:E(G^2)}
    \lim_{t \to \infty} \frac{K(t,0)}{t^2}=\lim_{t \to \infty} \frac{\mathbbm{E}(G(t)^2)}{t^2 e^{2\alpha t}}= \frac{(h \alpha' c)^2 v \int_0^\infty e^{-2\alpha u}d\mathbb{P}(L\leq u)}{1- h\int_0^\infty e^{- 2 \alpha u} d\mathbb{P}(L\leq u)}=(h \alpha' c)^2 k.
\end{equation}

Let's go back to the proof of Lemma~\ref{lem:G(t+tau)G(t)} and see
that $f(t,\tau)$ satisfies the hypotheses of Theorem~\ref{thm:renewal
theorem}. In~\eqref{eq:f}, each of the seven integrands, when divided
by $t(t + \tau)$, converges to a constant  when
$t \to \infty$, uniformly for $\tau \geq 0$ (see~\eqref{eq:E(G(t))},\eqref{eq:Z(t)},\eqref{eq:V_tW_t},
and~\cite[pg. 145]{harris1963theory}). Furthermore, each of these
integrands is dominated by the product of two locally bounded functions,
one depending from $t$ and another one depending from $t+\tau$ (use
the Cauchy-Schwartz inequality for the last three integrands to see
it). As these functions satisfy the hypotheses of
Lemma~\ref{lem:convergenza uniforme}
(see~\eqref{eq:E(G(t))},\eqref{eq:Z(t)}, and~\eqref{eq:E(G^2)}),
we can conclude that 
\begin{align*}
    \lim_{t \to \infty} \frac{f(t, \tau)}{t(t +\tau)}= (h \alpha'
    c)^2\int_0^\infty d\mathbb{P}'(L\leq u) = (h \alpha' c)^2 v
    \int_0^\infty e^{-2\alpha u}d\mathbb{P}(L\leq u) \quad
    \text{uniformly for $\tau\geq 0$.}
\end{align*}

Moreover, using the Cauchy-Schwartz inequality (for the last three
integrals we have to use it twice), each of the integrals in~\eqref{eq:f}
are dominated by the product of two functions, one depending on $t$
and the other one on $t+\tau$, which satisfy the hypotheses of
Theorem~\ref{thm:renewal theorem}. So, Corollary~\ref{cor:1} implies
\begin{equation*}
    \lim_{t\to \infty}\frac{K(t)}{t(t+\tau)}=\frac{(h \alpha' c)^2 v \int_0^\infty e^{-2\alpha u}d\mathbb{P}(L\leq u)}{1- h\int_0^\infty e^{-2\alpha u}d\mathbb{P}(L\leq u)}\quad \text{ uniformly for $\tau \geq 0$.}
\end{equation*}
The definitions of $k$ and $\WGtime{t}$ at~\eqref{eq:k} and~\eqref{eq:W_and_V}, respectively, allow to conclude the proof.
\end{proof}

An immediate consequence of this lemma is the following.

\begin{corollary}[$\WGtime{t}$ is a Cauchy sequence in $L^2$.]\label{cor:Cauchy} Using the previous notation, we have
\begin{equation*}
    \lim_{t \to \infty} \mathbb{E}((\WGtime{t+\tau}-\WGtime{t})^2) \to 0 \qquad \text{uniformly for $\tau \geq 0.$}
\end{equation*}
\end{corollary}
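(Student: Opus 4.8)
The plan is to reduce the statement entirely to the two covariance computations already in hand, namely Lemma~\ref{lem:G(t+tau)G(t)} together with the $\tau=0$ second-moment identity~\eqref{eq:E(G^2)} established within its proof. First I would expand the square:
\begin{equation*}
\mathbb{E}((\WGtime{t+\tau}-\WGtime{t})^2) = \mathbb{E}(\WGtime{t+\tau}^2) - 2\,\mathbb{E}(\WGtime{t+\tau}\WGtime{t}) + \mathbb{E}(\WGtime{t}^2),
\end{equation*}
so that the claim follows once I show each of the three terms converges to the same constant $k$ as $t\to\infty$, uniformly for $\tau \geq 0$: then the cross term cancels the two diagonal terms, $k - 2k + k = 0$.

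The three terms are handled as follows. The middle term is exactly the content of Lemma~\ref{lem:G(t+tau)G(t)}, which already gives $\mathbb{E}(\WGtime{t+\tau}\WGtime{t}) \to k$ uniformly for $\tau \geq 0$. The last term does not depend on $\tau$; rewriting~\eqref{eq:E(G^2)} through the definition~\eqref{eq:W_and_V} of $\WGtime{t}$ gives $\mathbb{E}(\WGtime{t}^2) = \mathbb{E}(G(t)^2)/(c^2 h^2 (\alpha')^2 t^2 e^{2\alpha t}) \to k$, so its convergence is trivially uniform in $\tau$.

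The only point requiring care, and the main (mild) obstacle, is the first term $\mathbb{E}(\WGtime{t+\tau}^2)$, in which the argument $t+\tau$ itself varies with $\tau$. Here I would avoid re-running the renewal machinery of the previous lemma and instead observe that the one-variable limit $\mathbb{E}(\WGtime{s}^2) \to k$ as $s \to \infty$ transfers directly to a uniform statement: given $\epsilon>0$, pick $s_0$ with $|\mathbb{E}(\WGtime{s}^2)-k|<\epsilon$ for all $s \geq s_0$; then for every $t \geq s_0$ and every $\tau \geq 0$ one has $t+\tau \geq t \geq s_0$, whence $|\mathbb{E}(\WGtime{t+\tau}^2)-k|<\epsilon$. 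Thus $\mathbb{E}(\WGtime{t+\tau}^2) \to k$ uniformly for $\tau \geq 0$. Combining the three limits in the expansion above yields $\mathbb{E}((\WGtime{t+\tau}-\WGtime{t})^2) \to 0$ uniformly for $\tau \geq 0$, which is the assertion.
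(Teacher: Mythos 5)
Your proof is correct and takes essentially the same route as the paper: both expand the square and reduce the claim to Lemma~\ref{lem:G(t+tau)G(t)} together with the $\tau=0$ second-moment limit~\eqref{eq:E(G^2)}, so the three terms give $k-2k+k=0$ uniformly in $\tau$. The only difference is that you make explicit the elementary observation that the one-variable limit of $\mathbb{E}(\WGtime{t+\tau}^2)$ is automatically uniform in $\tau$ because $t+\tau\geq t$, a step the paper leaves implicit.
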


\begin{proof}
From Lemma~\ref{lem:G(t+tau)G(t)}, uniformly for $\tau \geq 0$, we have that 
\begin{align*}
   \lim_{t \to \infty} \mathbb{E}((\WGtime{t+\tau}-\WGtime{t})^2)=  \lim_{t \to \infty} \Big[\mathbb{E}(\WGtime{t+\tau}^2) + \mathbb{E}(\WGtime{t}^2) -2\mathbb{E}(\WGtime{t+\tau}\WGtime{t})\Big] 
   = k +k - 2k = 0
\end{align*}
\end{proof}

We have just proved that $\WGtime{t}$ is a Cauchy sequence in $L^2$, i.e. for every $ \epsilon > 0$ there exists a $t_\epsilon> 0$ s.t. for every $ t>t_\epsilon$ and $\tau \geq0 $ we have $\mathbb{E}((\WGtime{t+\tau}-\WGtime{t})^2)<\epsilon.$ Thanks to the completeness of the $L^2$ space, we can now easily prove Theorem~\ref{th:L2 convergence}.

\begin{theorem}[Mean square convergence of $G(t)$]\label{th:L2 convergence}
There exists a non-negative random variable $\WG \in L^2$ such that 
\begin{equation*}
    \lim_{t \to \infty} \mathbb{E}((\WGtime{t}-\WG)^2)=0,
  \end{equation*}
with $\mathbb{E}(\WG)=1$ and $\Var(\WG)=k-1= [(v+h) \int_0^\infty e^{- 2 \alpha u} d\mathbb{P}(L\leq u) -1]/[1 - h \int_0^\infty e^{- 2 \alpha u} d\mathbb{P}(L\leq u) ] >0$.
\end{theorem}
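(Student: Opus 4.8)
The plan is to exploit the completeness of $L^2(\Omega,\mathcal{B}(\Omega),\mathbb{P})$ together with the Cauchy property already secured in Corollary~\ref{cor:Cauchy}, and then to push the first and second moments through to the limit. First I would observe that the statement of Corollary~\ref{cor:Cauchy} is exactly the Cauchy criterion for the family $\{\WGtime{t}\}_{t\geq 0}$ in $L^2$: given $\epsilon>0$ there is a $t_\epsilon$ such that $\mathbb{E}((\WGtime{t_1}-\WGtime{t_2})^2)<\epsilon$ for all $t_1,t_2>t_\epsilon$ (write $t_2=t_1+\tau$ and use the uniformity in $\tau$). Since $L^2$ is complete, there exists $\WG\in L^2$ with $\mathbb{E}((\WGtime{t}-\WG)^2)\to 0$, which is the asserted convergence, and the limit is unique up to almost sure equality.

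Next I would verify the three stated properties of $\WG$. For non-negativity, each $\WGtime{t}=G(t)/(ch\alpha' t e^{\alpha t})\geq 0$ almost surely because $G(t)\geq 0$; as $L^2$ convergence implies convergence in probability, some subsequence $\WGtime{t_n}\to\WG$ almost surely, and an almost sure limit of non-negative variables is non-negative, so $\WG\geq 0$ a.s. For the mean, on a probability space $\|\cdot\|_1\leq\|\cdot\|_2$, so $L^2$ convergence gives $\mathbb{E}(\WGtime{t})\to\mathbb{E}(\WG)$; combined with $\mathbb{E}(\WGtime{t})\to 1$ from~\eqref{eq:E(G(t))} this yields $\mathbb{E}(\WG)=1$. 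For the second moment, continuity of the $L^2$ norm gives $\mathbb{E}(\WGtime{t}^2)\to\mathbb{E}(\WG^2)$, while Lemma~\ref{lem:G(t+tau)G(t)} evaluated at $\tau=0$ gives $\mathbb{E}(\WGtime{t}^2)\to k$; hence $\mathbb{E}(\WG^2)=k$ and $\Var(\WG)=\mathbb{E}(\WG^2)-(\mathbb{E}(\WG))^2=k-1$, which rearranges into the displayed expression directly from the definition of $k$ in~\eqref{eq:k}.

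The only genuine inequality work lies in the strict positivity of the variance. The denominator $1-h\int_0^\infty e^{-2\alpha u}d\mathbb{P}(L\leq u)$ is positive by the defectiveness bound~\eqref{eq:P_difective}, so it suffices to show the numerator is positive, i.e. $(v+h)\int_0^\infty e^{-2\alpha u}d\mathbb{P}(L\leq u)>1$. Writing $v+h=\mathbb{E}(N(N-1))+\mathbb{E}(N)=\mathbb{E}(N^2)$ and $\int_0^\infty e^{-2\alpha u}d\mathbb{P}(L\leq u)=\mathbb{E}(e^{-2\alpha L})$, this quantity equals $\mathbb{E}(N^2)\mathbb{E}(e^{-2\alpha L})$. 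By Jensen's inequality $\mathbb{E}(e^{-2\alpha L})\geq(\mathbb{E}(e^{-\alpha L}))^2=h^{-2}$, the last equality using the Malthusian relation~\eqref{eq:malthus}, with \emph{strict} inequality because $L$ is non-lattice and hence non-degenerate; together with $\mathbb{E}(N^2)\geq(\mathbb{E}(N))^2=h^2$ this gives $\mathbb{E}(N^2)\mathbb{E}(e^{-2\alpha L})>h^2\cdot h^{-2}=1$, so $\Var(\WG)=k-1>0$.

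I expect no serious obstacle, since the heavy lifting is already contained in Corollary~\ref{cor:Cauchy} and Lemma~\ref{lem:G(t+tau)G(t)}; the argument is then a routine application of Hilbert-space completeness plus continuity of the moments under $L^2$ convergence. The one point requiring care is the final strict positivity, where one must invoke the non-degeneracy of $L$ (guaranteed by the non-lattice assumption) to upgrade one of the two Jensen inequalities to a strict one.
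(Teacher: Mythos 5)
Your proposal is correct, and for the bulk of the argument it coincides with the paper's proof: existence of $\WG$ via the Cauchy property of Corollary~\ref{cor:Cauchy} and completeness of $L^2$; $\mathbb{E}(\WG)=1$ from $L^2\subset L^1$ and equation~\eqref{eq:E(G(t))}; and $\mathbb{E}(\WG^2)=\lim_t\mathbb{E}(\WGtime{t}^2)=k$ from continuity of the $L^2$ norm together with Lemma~\ref{lem:G(t+tau)G(t)} at $\tau=0$, giving $\Var(\WG)=k-1$. Where you genuinely depart from the paper is the strict positivity of the variance. The paper disposes of this by citation: it invokes Harris's argument (pg.~146 of his book), which shows that $\WZtime{t}$ converges almost surely to a variable $\WZ$ having the same mean and variance as $\WG$, and the positivity is inherited from that analysis. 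You instead give a short self-contained proof: writing $v+h=\mathbb{E}(N^2)$ and the integral as $\mathbb{E}(e^{-2\alpha L})$, you combine $\mathbb{E}(N^2)\geq h^2$ with the strict Jensen inequality $\mathbb{E}(e^{-2\alpha L})>\bigl(\mathbb{E}(e^{-\alpha L})\bigr)^2=h^{-2}$, the strictness coming from non-degeneracy of $L$ (implied by the standing non-lattice assumption, since a point mass is a lattice distribution), and the last identity from the Malthusian relation~\eqref{eq:malthus}; the denominator's positivity is exactly the defectiveness bound~\eqref{eq:P_difective}. Your route buys independence from Harris's external result and makes transparent exactly which hypotheses (non-degeneracy of $L$, super-criticality through $\alpha>0$) drive the strict inequality; the paper's route is shorter and simultaneously sets up the fact, needed in Theorem~\ref{th:V=W}, that $\WZ$ and $\WG$ share the same mean and variance. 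Both are valid; your extra verification of non-negativity of $\WG$ via an almost surely convergent subsequence is a point the paper leaves implicit.
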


\begin{proof}
The existence of a such $\WG$ follows from Corollary~\ref{cor:Cauchy},
the fact that the  $L^2$ space is complete, and that $\WGtime{t}$
satisfies the Cauchy criterion for convergence in $L^2$.
Using~\eqref{eq:E(G(t))} and the fact that $L^2 \subset L^1$,  we
know that $\mathbb{E}(\WG)=\lim_{t \to \infty}\mathbb{E}(\WGtime{t})=1$,
so it remains only to compute the variance. From the $L^2$ convergence
we have that $\mathbb{E}(\WG^2)=\lim_{t \to \infty}
\mathbb{E}(\WGtime{t}^2)$. Then,
\begin{align}\label{eq:Var_V}
    \Var(\WG)= \mathbb{E}(\WG^2) - \mathbb{E}(\WG)^2 =\lim_{t \to \infty} \mathbb{E}(\WGtime{t}^2) -1 =k-1
    \overset{\eqref{eq:k}}{=}  \frac{(v+h) \int_0^\infty e^{- 2 \alpha u} d\mathbb{P}(L\leq u) -1}{1 - h \int_0^\infty e^{- 2 \alpha u} d\mathbb{P}(L\leq u) }.
\end{align}
The positivity of~\eqref{eq:Var_V} follows from the same argument used by Harris in~\cite[pg. 146]{harris1963theory}.
Indeed, there he proved that the process $\WZtime{t}$ converges a.s. to a random variable $\WZ$ with the same mean and variance as  $\WG$.
\end{proof}

Theorem~\ref{th:L2 convergence} gives us the mean square convergence
of $\WGtime{t}$, which implies also the convergence in probability
and in mean. In Section~\ref{ch:almost sure convergence} we will
see that the convergence is also true with probability one.

\subsection{Functional equation for the MGF of $(\WG, \WZ)$} \label{ch:MGF W}

A surprising consequence of Theorem~\ref{th:L2 convergence}
and~\cite[Theorem 19.1]{harris1963theory} is that the processes
$\WG$ and $\WZ$ share the same mean and variance. In this section,
using the Moment Generating Function (MGF) of the pair $(\WG,\WZ)$,
we prove that these two variables are actually almost surely equal.
That is, on a path-by-path basis, the prefactor for the normalised
population size and for the normalised total generation is the same
with probability one.

\begin{theorem}[$Z(t)$ and $G(t)$ have same randomness in their dominant terms]\label{th:V=W}
Given 
\begin{equation*}
\frac{G(t)}{c h \alpha'  t e^{\alpha t}} 
=\WGtime{t} \overset{L^2}{\rightarrow}\WG 
\text{ and }
\frac{Z(t)}{c e^{\alpha t}}
=\WZtime{t} \overset{a.s.}{\rightarrow}
\WZ  
\end{equation*}
we have that 
\begin{equation*}
    \WG=\WZ \quad \text{ a.s.}
\end{equation*}
\end{theorem}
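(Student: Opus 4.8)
The plan is to prove the sharper quantitative statement $\E((\WG-\WZ)^2)=0$, from which $\WG=\WZ$ almost surely is immediate. This reduces the theorem to computing three second moments, $\E(\WG^2)$, $\E(\WZ^2)$, and the cross-moment $\E(\WG\WZ)$, and checking that they conspire to force the correlation between $\WG$ and $\WZ$ to equal one. Two of these are already in hand: Theorem~\ref{th:L2 convergence} gives $\E(\WG)=1$ and $\Var(\WG)=k-1$, hence $\E(\WG^2)=k$; and the a.s./$L^2$ limit $\WZ$ of~\eqref{eq:Z(t)} has, by \cite[Theorem 19.1]{harris1963theory}, the same mean and variance as $\WG$, so $\E(\WZ)=1$ and $\E(\WZ^2)=k$ as well.

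The remaining ingredient is $\E(\WG\WZ)=k$, which I would extract from Lemma~\ref{lem:Z(t+tau)G(t)} evaluated at $\tau=0$, namely $\lim_{t\to\infty}\E(\WGtime{t}\WZtime{t})=k$. To pass from this limit of cross-moments of the processes to the cross-moment of the limits, I would invoke joint $L^2$ convergence: since $\WGtime{t}\to\WG$ in $L^2$ (Theorem~\ref{th:L2 convergence}) and $\WZtime{t}\to\WZ$ in $L^2$ (equation~\eqref{eq:Z(t)}), a two-term Cauchy--Schwarz split
\[
|\E(\WGtime{t}\WZtime{t})-\E(\WG\WZ)|\le \|\WGtime{t}\|_2\,\|\WZtime{t}-\WZ\|_2 + \|\WG\|_2\,\|\WGtime{t}-\WG\|_2
\]
shows the left-hand side vanishes as $t\to\infty$, the factor $\|\WGtime{t}\|_2$ staying bounded because it converges. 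Hence $\E(\WG\WZ)=k$, and assembling the three moments yields $\E((\WG-\WZ)^2)=\E(\WG^2)-2\E(\WG\WZ)+\E(\WZ^2)=k-2k+k=0$. Equivalently $\Cov(\WG,\WZ)=k-1=\Var(\WG)=\Var(\WZ)$, so the correlation is one and the two variables, sharing equal means and variances, must coincide almost surely.

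The genuinely substantive work has already been carried out in the preceding results, in particular the uniform covariance computation of Lemma~\ref{lem:Z(t+tau)G(t)} and the $L^2$ convergence of Theorem~\ref{th:L2 convergence}, so the only delicate point that remains is the interchange of limit and expectation in $\E(\WGtime{t}\WZtime{t})\to\E(\WG\WZ)$. This is where \emph{joint} $L^2$ convergence, rather than merely marginal convergence, is essential, and it is the single step at which a naive argument could slip. An alternative route, suggested by the title of this section, is to derive a functional equation for the joint moment generating function of $(\WG,\WZ)$ from Proposition~\ref{prop:F} and verify that the diagonal ansatz $\WG=\WZ$ solves it uniquely; I would view that as a valuable cross-check, but heavier than the second-moment argument above, which already closes the proof using only results established earlier in the paper.
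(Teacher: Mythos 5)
Your proof is correct, but it takes a genuinely different and more elementary route than the paper. The paper works through the section's titular machinery: from Proposition~\ref{prop:F} it derives a functional equation for the joint MGF of $(\WG,\WZ)$, invokes Levinson's uniqueness theorem and a result of Mukherjea et al.\ to conclude first $\WG\overset{D}{=}\WZ$ and then (taking $r=s$ in the functional equation) $\WG+\WZ\overset{D}{=}2\WZ$, and finally extracts $\Cov(\WG,\WZ)=\Var(\WZ)$ from the latter identity so that the Pearson correlation equals one. Your argument bypasses the MGF apparatus entirely: you read the cross-moment $\E(\WG\WZ)=k$ directly off Lemma~\ref{lem:Z(t+tau)G(t)} at $\tau=0$ (equivalently, off equation~\eqref{eq:k}), justify the exchange of limit and expectation by joint $L^2$ convergence, and combine it with $\E(\WG^2)=\E(\WZ^2)=k$ from Theorem~\ref{th:L2 convergence} and Harris's Theorem 19.1 to get $\E((\WG-\WZ)^2)=k-2k+k=0$. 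Both proofs ultimately hinge on the same fact, $\Cov(\WG,\WZ)=\Var(\WZ)$, but the paper obtains it indirectly through distributional identities while you obtain it directly from the covariance lemma that the paper had already proved; your route is shorter, stays entirely inside results established earlier in the paper, and needs no external uniqueness theorems, whereas the paper's route additionally yields the functional equation~\eqref{eq:MGF coppia limite} characterising the joint law of $(\WG,\WZ)$, which has independent interest. One small slip: in your Cauchy--Schwarz splitting the second term should read $\|\WZ\|_2\,\|\WGtime{t}-\WG\|_2$ (from the telescoping $\WGtime{t}\WZtime{t}-\WG\WZ=\WGtime{t}(\WZtime{t}-\WZ)+(\WGtime{t}-\WG)\WZ$), not $\|\WG\|_2\,\|\WGtime{t}-\WG\|_2$; this is immaterial since all the norms involved are finite (indeed all equal $\sqrt{k}$), and the standard fact you are invoking---that the inner product is continuous under $L^2$ convergence of both factors---is sound.
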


\begin{proof}
The proof is divided in two parts: first, we prove that $\WG$ and $\WZ$ are equally distributed, then that they coincide with probability one.

Theorem~\ref{th:L2 convergence}, together with~\eqref{eq:Z(t)}, imply that  $(\WGtime{t}, \WZtime{t}) \xrightarrow[]{D} (\WG,\WZ)$ in distribution. So, we can characterise the distribution of the pair $(\WG, \WZ)$ studying the MGF of  $(\WGtime{t}, \WZtime{t})$ when $t \to \infty$.
 
Proposition~\ref{prop:F} gives us an equation solved by the PGF of the vector $(G(t), G(t+ \tau), Z(t), Z(t+\tau))$. Evaluating this equation in $(s_1,1,r_1,1,t,0)$, we obtain the following expression solved by the PGF $F(s_1,r_1,t)$ of $(G(t),Z(t))$

\begin{equation} \label{eq:F(s,r,t)}
    F(s_1,r_1,t)=\mathbb{E}(s_1^{G(t)}r_1^{Z(t)}) = r_1\mathbb{P}(L>t) + \int_0^t\rho_N\Big(F(s_1,s_1r_1,t-u)\Big) d\mathbb{P}(L\leq u).
\end{equation}

Replacing $s_1$ with $\exp(-s/[h c \alpha' t e^{\alpha t}])$ and $r_1$ with $\exp(-r/[c e^{\alpha t}])$, for $s,r\geq 0$,  we obtain an expression solved by the MGF $\phi(s,r,t)$ of $(\WGtime{t},\WZtime{t})$:
\begin{align*} 
    \phi(s,r,t)&= \mathbb{E}\Big(e^{-\frac{s G(t)}{hc \alpha' t e^{\alpha t}}}e^{-\frac{r Z(t)}{c e^{\alpha t}}}\Big) \notag\\
    &=e^{-\frac{r}{c e^{\alpha t}}}\mathbb{P}(L>t) + \int_0^t \rho_N \Big(\mathbb{E}\Big(e^{-\frac{(t-u)s e^{-\alpha u}}{t}\WGtime{t-u}}e^{-\frac{(s + h r\alpha' t) e^{-\alpha u}}{h \alpha' t}\WZtime{t-u}}\Big)\Big)d\mathbb{P}(L\leq u) \notag \\
    &= e^{-\frac{r}{c e^{\alpha t}}}\mathbb{P}(L>t) + \int_0^t \rho_N \Big(\phi\Big(\frac{(t-u)}{t}s e^{-\alpha u},\frac{(s + h r\alpha' t) }{h 
\alpha' t}e^{-\alpha u}, t-u\Big)\Big) d\mathbb{P}(L\leq u).
\end{align*}

 Taking the limit for $t\to \infty$ of $\phi(s,r,t)$, we obtain that $\mathbb{E}(\exp(-s\WG) \exp(-r \WZ))$ solves the integral equation 
\begin{equation}\label{eq:MGF coppia limite}
    \phi(s,r)=\int_0^{\infty} \rho_N \Big(\phi\Big(s e^{-\alpha u},re^{-\alpha u}\Big) \Big)d\mathbb{P}(L\leq u) \quad  s,r\geq 0 .
\end{equation}
This means that if we consider $r=0$, the function $\mathbb{E}[\exp(-s \WG) ]$, that represents the MGF of $\WG$, solves the integral equation
\begin{equation}\label{eq: MGF W}
    \psi(s)=\int_0^{\infty} \rho_N \Big(\psi\big(s e^{-\alpha u}\big)\Big) d\mathbb{P}(L\leq u), \quad {s\geq 0}
\end{equation}
with $\psi(0)=0$ and $\psi'(0)=-1$. 
The uniqueness of the solution of this problem~\cite[Theorem 4.1]{levinson1960limiting} 
and the fact that the MGF of the variable
$\WZ$ solves \eqref{eq: MGF W} too~\cite[pg. 146]{harris1963theory},
give us that the MGFs of $\WZ$ and $\WG$ coincide for $s\geq 0$.
Using a result proved by Mukherjea et al.~\cite[Theorem 2]{mukherjea2006note}, we can
conclude that $\WZ$ is equal in distribution to $\WG$.

Now, if we consider $r=s$ in~\eqref{eq:MGF coppia limite}, we can see that the function $\mathbb{E}[\exp(-s(\WG+\WZ)) ]$, that represents the MGF of $\WG+\WZ$, solves \eqref{eq: MGF W} but with the initial conditions $\psi(0)=0$ and $\psi'(0)=-2$. 
Another solution of \eqref{eq: MGF W} with the same initial conditions is given by $2\WZ$. Also in this case, the uniqueness of the solution and \cite[Theorem 2]{mukherjea2006note} allows us to conclude that $2\WZ\overset{D}{=} \WZ+ \WG$.

These last two results give us that $\WZ\overset{a.s.}{=}\WG$. In fact, $\WZ\overset{D}{=}\WG$ implies that $\Var(\WZ) = \Var(\WG)$, and 
\begin{align*}
    2\WZ\overset{D}{=} \WZ+ \WG \quad \Longrightarrow& \quad 4\Var(\WZ)= \Var(\WZ) + \Var(\WG) + 2\Cov(\WZ,\WG) \notag \\
    \Longrightarrow \quad  \Var(\WZ)= \Cov(\WZ,\WG) 
    &\quad \Longrightarrow \quad  \text{Corr}_{\WZ,\WG}:=\frac{\Cov(\WZ,\WG)}{\sqrt{\Var(\WZ)}\sqrt{\Var(\WG)}}=1, 
\end{align*}
where in the last inequality we have used the definition of Pearson's correlation coefficient. The correlation coefficient equal to 1 implies that $\WG=a\WZ +b$ a.s., for $a \geq 0$, $b \in \mathbb{R}$~\cite[Theorem 4.5.7]{casella2002statistical}. From $\WZ\overset{D}{=}\WG$, we obtain $a=1$ and $b=0$, i.e. $\WZ\overset{a.s.}{=}\WG$. This conclude the proof.
\end{proof}

Thus, from Theorem~\ref{th:V=W}, $\WZ$ can be used in lieu of $\WG$ from here on.

\subsection{Almost sure convergence of $G(t)$}\label{ch:almost sure convergence}

We have gathered the results needed to establish one of the significant
results of the article: the almost sure convergence of a normalised
version of the process $\{G(t)\}$.  In order to prove that, we will assume something concerning
the speed of convergence of $\WGtime{t}$ to $\WZ$ as $L^2$ functions.
This assumption is equivalent to the one made by Harris in~\cite[Chapter
VI, Theorem 21.1]{harris1963theory} concerning the size of the
population, which - for the population size - was later established
by Jagers~\cite{jagers1969renewal} to be unnecessary.

\begin{theorem}[Almost sure convergence of $G(t)$] \label{th:G(t) almost sure}
If $\int_0^{\infty} \mathbb{E}((\WGtime{t}-\WZ)^2) dt < \infty$, we have that 
\begin{equation*}
\frac{G(t)}{h \alpha' c t e^{\alpha t}}=\WGtime{t}
     \xrightarrow[t \to \infty]{a.s} \WZ.  
\end{equation*}
\end{theorem}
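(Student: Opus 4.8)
The plan is to follow the classical route of upgrading $L^2$ convergence to almost sure convergence, essentially as in Harris~\cite[Chapter VI, Theorem 21.1]{harris1963theory}: first obtain almost sure convergence along a skeleton of sampling times using the integrability hypothesis, and then control the oscillation of $\WGtime{t}$ between consecutive sampling times. Since Theorem~\ref{th:L2 convergence} and Theorem~\ref{th:V=W} give $\WGtime{t}\overset{L^2}{\to}\WG=\WZ$, while $\WZtime{t}\to\WZ$ almost surely is already known from~\eqref{eq:Z(t)}, the whole difficulty is to show that $\WGtime{t}$ cannot fluctuate away from its $L^2$ limit on a set of positive probability.

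First I would exploit the hypothesis $\int_0^\infty \E((\WGtime{t}-\WZ)^2)\,dt<\infty$ via Tonelli's theorem: since $\E\big(\int_0^\infty(\WGtime{t}-\WZ)^2\,dt\big)=\int_0^\infty\E((\WGtime{t}-\WZ)^2)\,dt<\infty$, the random integral $\int_0^\infty(\WGtime{t}-\WZ)^2\,dt$ is finite almost surely. Writing $A_n:=\int_n^{n+1}(\WGtime{t}-\WZ)^2\,dt$, this forces $A_n\to0$ a.s., and since $A_n$ is the average of $(\WGtime{t}-\WZ)^2$ over an interval of unit length there is, for each $n$, a time $s_n\in[n,n+1]$ with $(\WGtime{s_n}-\WZ)^2\le A_n\to0$. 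Hence $\WGtime{s_n}\to\WZ$ almost surely along the skeleton $\{s_n\}$, one point in each unit interval. This step uses only the integrability hypothesis and avoids needing a summable pointwise rate $\sum_n\E((\WGtime{n}-\WZ)^2)$, which the integral condition does not directly supply.

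The main obstacle is then to show that the oscillation $\Delta_n:=\sup_{s,t\in[n,n+1]}|\WGtime{t}-\WGtime{s}|$ tends to $0$ almost surely, for combined with the skeleton convergence this yields $\sup_{t\in[n,n+1]}|\WGtime{t}-\WZ|\to0$, i.e. the full limit. Here I would use the structure of $G(t)$ as a pure-jump functional of the tree: writing $\WGtime{t}=G(t)/\Phi(t)$ with $\Phi(t):=h\alpha' c\,t\,e^{\alpha t}$, the smooth normaliser changes over $[n,n+1]$ only by the bounded factors $e^{\alpha}$ and $(n+1)/n\to1$, so it suffices to sandwich $G$ between $\underline G_n:=\inf_{[n,n+1]}G$ and $\overline G_n:=\sup_{[n,n+1]}G$ and to show that $\overline G_n/\Phi(n)$ and $\underline G_n/\Phi(n+1)$ both converge to $\WZ$. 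Bounding $\overline G_n$ by the total generation carried by the cells alive at time $n$ together with their descendants born before $n+1$, one can estimate $\E((\overline G_n/\Phi(n)-\WZ)^2)$ using the probability generating function identity of Proposition~\ref{prop:F} together with the second-moment asymptotics established in Lemmas~\ref{lem:Z(t+tau)G(t)} and~\ref{lem:G(t+tau)G(t)}. Showing these estimates are summable in $n$ then permits a Markov inequality plus Borel--Cantelli argument to conclude $\Delta_n\to0$ a.s.

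I expect this oscillation estimate to be the delicate part. The raw increments of $G$ over a unit interval are of the same exponential order as $G$ itself, since generations grow linearly and cell numbers grow like $e^{\alpha t}$; it is only after normalisation by $t\,e^{\alpha t}$, and after the up-and-down jumps at divisions and deaths largely cancel, that the normalised increments become small. Quantifying this cancellation summably in $n$, rather than merely establishing $\E(\Delta_n^2)\to0$, which the Cauchy property of Corollary~\ref{cor:Cauchy} already gives, is precisely where the branching second-moment machinery must be invoked. Once $\Delta_n\to0$ almost surely is in hand, the triangle inequality together with the skeleton convergence yields $\WGtime{t}\to\WZ$ almost surely, completing the proof.
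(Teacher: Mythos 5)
Your first step (Tonelli/Fubini to get $\P\bigl(\int_0^\infty(\WGtime{t}-\WZ)^2\,dt<\infty\bigr)=1$) is exactly how the paper begins, and your skeleton extraction of points $s_n\in[n,n+1]$ with $\WGtime{s_n}\to\WZ$ a.s.\ is sound. The proof breaks down, however, at the oscillation step, in two distinct places. First, the sandwich claim is false as stated: over a unit interval the normaliser $\Phi(t)=h\alpha' c\,t e^{\alpha t}$ grows by a factor $\approx e^{\alpha}$, which is bounded but \emph{not} close to $1$. Consequently, even if the theorem's conclusion held, one would have $\overline{G}_n/\Phi(n)\to e^{\alpha}\WZ$ and $\underline{G}_n/\Phi(n+1)\to e^{-\alpha}\WZ$, not $\WZ$; the best your sandwich can ever yield is $\limsup_n \Delta_n\le(e^{\alpha}-e^{-\alpha})\WZ$, which does not vanish. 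Shrinking the mesh would repair this, but then the number of intervals grows and your summability requirement becomes even more demanding. Second, the Borel--Cantelli step has no support: the hypothesis $\int_0^\infty\E((\WGtime{t}-\WZ)^2)\,dt<\infty$ is an integrability condition that supplies no pointwise rate at the times $n$ (the integrand can be large on a set of finite measure placed wherever it likes), and none of Proposition~\ref{prop:F}, Lemma~\ref{lem:Z(t+tau)G(t)}, Lemma~\ref{lem:G(t+tau)G(t)} or Corollary~\ref{cor:Cauchy} gives a \emph{rate} of convergence, let alone a summable one; moreover, second moments of suprema such as $\E\bigl((\overline{G}_n/\Phi(n)-\WZ)^2\bigr)$ would require a maximal inequality that is nowhere available.

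The ingredient you are missing is pathwise monotonicity, which is how the paper controls oscillation without any rates. The paper first assumes $p_0=\P(N=0)=0$, so that $G(t)$ is non-decreasing; this gives the deterministic one-sided bound $\WGtime{t+\tau}\ge\frac{t}{(t+\tau)e^{\alpha\tau}}\WGtime{t}$ for every realisation. It then argues by contradiction: if on a set of positive probability $\limsup_t\WGtime{t}>(1+\delta)\WZ$ (or $\liminf_t\WGtime{t}<(1-\delta)\WZ$) with $\WZ>0$, the monotonicity bound forces the deviation to persist on an interval of length bounded below after (respectively, before) each deviating time, so $\int_0^\infty(\WGtime{t}-\WZ)^2\,dt=\infty$ there, contradicting the a.s.\ finiteness obtained from Fubini. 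The general case $p_0\neq0$, where $G$ can jump down at deaths, is handled by decomposing $G=G_B-G_D$ into the monotone processes of generations of cells born and of cells dead, and applying the argument to each. Your skeleton step could in fact be combined with this same monotonicity inequality on a mesh of size $\delta$, followed by $\delta\downarrow0$, to give a direct (non-contradiction) proof; but without monotonicity, or some other device replacing it, your moment-based oscillation estimate cannot be completed from what the paper provides.
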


\begin{proof}
We start with the additional hypothesis $p_0=\mathbbm{P}(N=0)=0$
in order to have $G(t)$ as a finite, non-decreasing step function
of $t$. Using Fubini's theorem on $\int_0^{\infty}
\mathbb{E}((\WGtime{t}-\WZ)^2) dt < \infty$, we obtain that
$\mathbb{P}(\int_0^{\infty}(\WGtime{t}-\WZ)^2 dt < \infty)=1$.
Since $G(t)$ is non-decreasing in t, we have
 \begin{equation}\label{eq:disuguaglianza W tilde tau}
     \WGtime{t+\tau}=\frac{G(t+ \tau)}{h c \alpha'  (t+\tau) e^{\alpha (t+\tau)}} \geq  \frac{t}{(t+ \tau)e^{\alpha \tau}} \frac{G(t)}{h c \alpha'  t e^{\alpha t}}= \frac{t}{(t+ \tau)e^{\alpha \tau}} \WGtime{t},
 \end{equation}
where the inequalities are true for every realisation of the random variables.

Let's suppose that $\WGtime{t} \xrightarrow[t \to \infty]{a.s.} \WZ$ is not true .
If $(\Omega, \mathcal{B}(\Omega), \mathbb{P})$ is the probability space where $\WGtime{t}$ and $\WZtime{t}$ are defined, then there exists a set $A\subseteq \{\omega \in \Omega| \lim_{t\to \infty} \WGtime{t}(\omega)\neq \WZ(\omega)\}$ that is measurable and such that $\mathbb{P}(A)>0$.
Since $\WZ>0$ a.s. \cite[Remark 1, Section 20]{harris1963theory}, we can also suppose that $\WZ(\omega)>0$ for every $\omega \in A$.

For every $\omega \in A$ we have that at least one between $\limsup_{t \to \infty}\WGtime{t}(\omega) > \WZ(\omega)$ and $\liminf_{t \to \infty}\WGtime{t}(\omega) < \WZ(\omega)$ is true. We will see that in both cases we will have $\int_0^{\infty} (\WGtime{t}(\omega)-\WZ(\omega))^2 dt = + \infty$, leading to the contradiction  $\mathbb{E}(\int_0^{\infty}(\WGtime{t}-\WZ)^2 dt) = + \infty$.

Let us start fixing $\omega \in A$ and assuming $\limsup_{t \to \infty}\WGtime{t}(\omega) > \WZ(\omega)$.
This implies that there exist a $ \delta >0$ and a sequence $(t_i)_{i \in \mathbb{N}}$, with $\lim_{i \to \infty} t_i=\infty$, such that $\WGtime{t_i}(\omega)>(1+\delta) \WZ(\omega)$, $i \in \mathbb{N}$. If we consider $0<\epsilon<\delta$, without loss of generality we can choose this sequence such that 
\begin{equation*}
t_{i+1} - t_i > \frac{(\delta-\epsilon) t_i}{1+ \epsilon + \alpha t_i(1+\delta)}:=b_i.    
\end{equation*}
Note that $\delta, \epsilon$, and $t_i$ depend on $\omega$ and that $(b_i)_{i\in \mathbb{N}}$ and $(t_i)_{i\in \mathbb{N}}$ are monotonically increasing.

Using~\eqref{eq:disuguaglianza W tilde tau} and the relation $e^{- \alpha \tau} \geq 1 - \alpha \tau$, we obtain for every $i \in \mathbb{N}$
\begin{align}
   \WGtime{t_i+\tau}(\omega)&\overset{\eqref{eq:disuguaglianza W tilde tau}}{\geq} \frac{t_i}{(t_i+ \tau)e^{\alpha \tau}} \WGtime{t_i}(\omega) > \frac{t_i}{t_i+ \tau} (1 - \alpha \tau)(1+\delta)\WZ(\omega), &\tau \in (0, \infty) \notag \\
   &\geq (1+\epsilon) \WZ(\omega) & \tau \in (0, b_i) \notag \\
   &\geq (1+\epsilon) \WZ(\omega) &\tau \in (0, b_1), \label{eq:(1+epsilon)}
\end{align}
where we have used the fact that the function $t_i (t_i+ \tau)^{-1} (1 - \alpha \tau)(1+\delta)$ is decreasing in $\tau$, that for $\tau=b_i$ it is equal to $(1+ \epsilon)$, and that $(b_i)_{i \in \mathbb{N}}$ is an increasing sequence.

Hence, using~\eqref{eq:(1+epsilon)}, we have for every $i$ that
\begin{align*}
    &\int_{t_i}^{t_{i+1}}(\WGtime{t}(\omega)-\WZ(\omega))^2 dt \geq  \int_{t_i}^{t_i + b_1}(\WGtime{t}(\omega)-\WZ(\omega))^2 dt \\
    = &\int_{0}^{b_1}(\WGtime{t_i+\tau}(\omega)-\WZ(\omega))^2 d\tau
    \geq(\epsilon \WZ(\omega))^2 b_1 >0.
\end{align*}
This allows us to say that $\int_0^{\infty} (\WGtime{t}(\omega)-\WZ(\omega))^2 dt = + \infty$.

Same conclusion can be obtained assuming $\liminf_{t \to \infty} \WGtime{t}(\omega) < \WZ(\omega)$. Indeed,
for the definition of $\liminf$ we have that there exist $ \delta \in (0,1)$ and a sequence $(t_i)_{i \in \mathbb{N}}$, with $t_i>1$ and $\lim_{i \to \infty} t_i=\infty$, such that $\WGtime{t_i}< (1- \delta) \WZ$. We can also pretend that $t_{i+1} - t_i > a >0$, where $a$ is chosen in order to satisfy the following inequalities for $i$ big enough
\begin{align*}
    0< \WGtime{t_i - \tau} & \overset{\eqref{eq:disuguaglianza W tilde tau}}{\leq}  \frac{t_i}{t_i - \tau} e^{\alpha \tau} \WGtime{t_i} < (1- \delta) \frac{t_i}{t_i - \tau} e^{\alpha \tau} \WZ &\tau \in (0, t_1)\\
    &\leq (1 - \epsilon)\WZ &\tau \in (0, a),
\end{align*}
 where $\epsilon$ is a constant s.t. $0 <\epsilon < \delta$.
The existence of such $a$ is consequence of the fact that $\psi(t,\tau):=(1- \delta) e^{\alpha \tau} t/(t - \tau) $, as long as $\tau<t$, is increasing in $\tau$ and decreasing in $t$. Indeed, this implies that there exists $a>0$ s.t. for $\tau \in [0,a]$ $(1-\delta)= \psi(1,0)\leq \psi(1,\tau)\leq(1-\epsilon)$, from which we can conclude that for $\tau \in [0,a]$ and $t \geq 1$, we have $(1-\delta)= \psi(t,0)\leq \psi(t,\tau)\leq(1-\epsilon)$ .
 
 Then, we have
  \begin{align*}
    &\int_{t_{i-1}}^{t_{i}}(\WZ(\omega)-\WGtime{t}(\omega))^2 dt \geq  \int_{t_i-a}^{t_i}(\WZ(\omega)-\WGtime{t}(\omega))^2 dt \\
    \geq & \int_{0}^{a}(\WZ(\omega)-\WGtime{t_i - \tau}(\omega))^2 d\tau \geq
    (\epsilon \WZ(\omega))^2 a.
\end{align*}
As before, this implies that $\int_0^{\infty} (\WGtime{t}(\omega)-\WZ(\omega))^2 dt = + \infty$.

So, for every $\omega \in A$ we have $\int_0^{\infty} (\WGtime{t}(\omega)-\WZ(\omega))^2 dt = + \infty$ and, because $\mathbb{P}(A)>0$, we have $\mathbb{E}(\int_0^{\infty}(\WGtime{t}-\WZ)^2 dt) = + \infty$. This contradicts the hypothesis of the theorem and so we have proved that $\lim_{t \to \infty} \WGtime{t}=\WZ$ with probability 1 under the condition  $p_0=0$.

When $p_0 \neq 0$, we can observe that $G(t)=G_B(t) - G_D(t)$, where
$G_B(t)$ and $G_D(t)$ are the sum of the generation of the cells
born and dead  before or at time $t$, respectively.  Also for these
processes we can find integral equations for the probability
generating function similar to the one found for $G(t)$ and repeat
all the previous steps. Thanks to the monotonicity of $G_B(t)$ and
$G_D(t)$, this time we don't need the assumption $p_0=0$, obtaining
the almost sure convergence of $G_B(t)/n_1t e^{\alpha t}$ and
$G_D(t)/n_2t e^{\alpha t}$ to the random variables $\WZ_B$ and
$\WZ_D$ respectively, where $n_1,n_2$ are positive constants. This
allows us to conclude that $\WGtime{t}$ converges to $\WZ_B + \WZ_D$.
\end{proof}

Having established the almost sure result for the limiting behaviour
of the total generation process $G(t)$, we are in a position to the
final deduction of the section that leads to equation \eqref{eq:main_result}.
Thanks to equation~\eqref{eq:Z(t)}, Theorem~\ref{th:G(t) almost
sure}, and the Continuous Mapping Theorem, we have the following
corollary.

\begin{corollary}[Almost sure average generation inference]\label{cor:main}
If $\E(N^2)<\infty$,   $\liminf_{t\to\infty} \Zp(t)>0$, and $\int_0^{\infty} \mathbb{E}((\WGtime{t}-\WZ)^2) dt < \infty$, we have that
\begin{equation*}
    \lim_{t\to \infty}\frac{G(t)}{t Z(t)} = h \alpha' = -\lim_{p\to 0} \lim_{t \to \infty} \frac{1}{pt}\log\left(\frac{Z^+(t)}{Z(t)}\right) \qquad \text{almost surely}.
\end{equation*}
\end{corollary}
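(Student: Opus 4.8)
The plan is to split the claimed double equality into its two halves and treat each with a different tool. The left equality, $\lim_{t\to\infty} G(t)/(tZ(t)) = h\alpha'$ almost surely, is a genuine sample-path statement and will follow by combining the almost-sure convergences already in hand. The right equality, $h\alpha' = -\lim_{p\to0}\lim_{t\to\infty}(pt)^{-1}\log(\Zp(t)/Z(t))$, is not a statement about the sample path at all but a restatement of the delabelling result \eqref{eq:Ken_formula} from \cite{weber2016inferring} together with the explicit evaluation of the limiting ratio of means in \eqref{eq:k}.

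For the first equality I would start from the normalisations in \eqref{eq:W_and_V} to write, for every $t$ with $Z(t)>0$,
\begin{equation*}
\frac{G(t)}{tZ(t)} = h\alpha'\,\frac{\WGtime{t}}{\WZtime{t}}.
\end{equation*}
By \eqref{eq:Z(t)} the denominator $\WZtime{t}$ converges almost surely to $\WZ$, and by Theorem~\ref{th:G(t) almost sure} the numerator $\WGtime{t}$ converges almost surely to the \emph{same} limit $\WZ$ (the identification of the two prefactors having been secured in Theorem~\ref{th:V=W}). Hence the pair $(\WGtime{t},\WZtime{t})$ converges almost surely to $(\WZ,\WZ)$, and applying the Continuous Mapping Theorem to the map $(x,y)\mapsto x/y$, which is continuous on $\{y>0\}$, gives $\WGtime{t}/\WZtime{t}\to 1$ and therefore $G(t)/(tZ(t))\to h\alpha'$ almost surely.

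The main obstacle is making the limiting ratio legitimate, i.e. ensuring the common limit $\WZ$ is strictly positive so that the Continuous Mapping Theorem may be invoked at the point $(\WZ,\WZ)$. This is exactly where the hypothesis $\liminf_{t\to\infty}\Zp(t)>0$ enters: it forces the process to survive, and on the survival event one has $\WZ>0$ almost surely \cite[Remark 1, Section 20]{harris1963theory}. Restricting to this probability-one survival set, the denominator stays bounded away from $0$ for large $t$ and the division is harmless. The remaining assumptions $\E(N^2)<\infty$ and $\int_0^{\infty}\E((\WGtime{t}-\WZ)^2)\,dt<\infty$ are precisely the hypotheses the upstream results (mean-square convergence and Theorem~\ref{th:G(t) almost sure}) demand, so no further work is needed to import them.

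For the second equality there is essentially nothing to prove beyond assembling existing facts: \eqref{eq:Ken_formula}, established in \cite{weber2016inferring} under $\liminf_{t\to\infty}\Zp(t)>0$, already identifies $-\lim_{p\to0}\lim_{t\to\infty}(pt)^{-1}\log(\Zp(t)/Z(t))$ with $\lim_{t\to\infty}\E(G(t))/(t\E(Z(t)))$, while \eqref{eq:k} evaluates this limiting ratio of expectations to be exactly $h\alpha'$. Chaining the two yields the right equality, and combining it with the first half completes the proof.
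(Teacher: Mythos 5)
Your proposal is correct and follows essentially the same route as the paper: the paper likewise obtains the left equality by combining equation~\eqref{eq:Z(t)}, Theorem~\ref{th:G(t) almost sure} (with the limits identified via Theorem~\ref{th:V=W}), and the Continuous Mapping Theorem on the survival event where $\WZ>0$, and it obtains the right equality by importing \eqref{eq:Ken_formula} from \cite{weber2016inferring} together with the evaluation $\lim_{t\to\infty}\E(G(t))/(t\E(Z(t)))=h\alpha'$ in \eqref{eq:k}. Your write-up simply makes explicit the ratio decomposition $G(t)/(tZ(t))=h\alpha'\,\WGtime{t}/\WZtime{t}$ and the role of the survival hypothesis, which the paper leaves implicit.
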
 

Thus the average estimation scheme first proposed \cite{weber2016inferring}
that is based on a one-way, heritable change in a neutral label is
almost surely correct on a path-by-path basis for a Bellman-Harris
branching process.

\section{A two-type Bellman-Harris process subject to one-way differentiation}

In addition to division and death, cells often undergo changes in
cell-type. For example, many tissues are formed through progressive
stages of proliferation and change in cell-type, called cellular
differentiation, from stem
cells~\cite{kondo1997identification,akashi2000clonogenic}, while
cancer cells arise as mutants with abherent DNA from healthy
cells~\cite{mendelsohn2015molecular,hong2010holland}. Changes in
cell-type are often accompanied by changes in population kinetics,
e.g. \cite{Akinduro18}, and to better understand these differentiation
processes it can be desirable to obtain information on the average
generation of each population as they are often reported as being
division-linked
\cite{hodgkin1996b,deenick1999switching,Duffy12,pauklin2013cell}.

As a basic model of changes in cell type, in the present section
we extend the previous results to a two-type Bellman-Harris branching
process subject to one-way differentiation, a model first considered
in \cite{jagers1969proportions} where cells of one type can give
rise to another but not vice-versa. These results significantly
extend the remit and utility of the inference of average generation
by random delabelling. In particular, if the initial cell is equipped
with a neutral label that is heritably lost with a fixed probability
per division, we prove that the average generation of each cell-type
can be inferred from knowledge of that probability and the proportion
of label positive cells. Before stating the results, we introduce
notation that is consistent with that used in Section~\ref{ch:assumption}
and with that employed in~\cite{jagers1969proportions}, where sample
path results for the population size were first established
in this two-type setting.

As in Fig. \ref{fig:cell_population}, consider a cell population
whose members are from two types, type-1 and type-2. Each cell lives
a random type-dependent lifetime $L_i$, $i\in\{1,2\}$, after which
it dies or divides generating $N_i$ offspring. We assume $L_i$ and
$N_i$ are independent for each cell, and amongst all cells.
Furthermore, we suppose that only type-1 cells can generate cells
of both types, i.e. $N_1$ takes values in $\mathbbm{N}^2$ and has
PGF $\rho_1$, whereas the offspring of type-2 cells are exclusively
type-2 cells, so that $N_2$ takes value in $\mathbbm{N}$ and has
PGF $\rho_2$. We denote by $h_i:=(\partial/\partial x_i) \rho_1(1,1)$
the average number offspring of type-$i$ generated from a type-1
cell and, with $\mu:= d/dx \rho_2(1)$, the average number of offspring
obtained from a type-2 cell. As in the single-type case, we suppose
that $h_1$ and $\mu$ are greater than $1$ so that both populations
are super-critical.

We assign a generation to each cell, the integer that records how
many divisions led to that cell (Fig. \ref{fig:cell_population}).
We define cells a time zero as being in generation zero. Furthermore,
we suppose the cells in the initial population are equipped with a
neutral label (i.e. one that does not influence population dynamics)
that, independently for each cell, is heritably lost immediately
prior to a cell's division with probability $p$. For $i\in \{1,2\}$,
we denote by $Z_i(t)$ the total number of type-$i$ cells in the
population at time $t$, by $G_i(t)$ the total generation of type-$i$
cells at time $t$, and by $Z^+_i(t)$ the size of type-$i$ label-positive
at time $t$. To describe the growth rates of these processes,
we will need the Malthusian parameters, $\alpha_1$ and $\alpha_2$,
that are the solutions of the equations
\begin{equation}\label{eq:alpha_1-alpha_2}
    h_1 \E\left(e^{-\alpha_1 L_1 t}\right) = 1
\quad \text{and} \quad
    \mu \E\left(e^{-\alpha_2 L_2 t}\right) = 1.
\end{equation} 
The existence and the uniqueness of the solutions
of these equations are guaranteed by the hypotheses $h_1>1$ and
$\mu>1$. As in Section~\ref{ch:assumption}, we denote the derivatives
of the Malthus parameters as a function of the average offspring number by 
\begin{equation*}
\alpha'_1 = \frac{1}{h_1^2 \int_0^{+ \infty}t e^{- \alpha_1 t}d\mathbb{P}(L_1\leq u)}
\quad \text{and} \quad 
\alpha'_2 = \frac{1}{\mu^2 \int_0^{+ \infty}t e^{- \alpha_2 t}d\mathbb{P}(L_2\leq u)}.  
\end{equation*}

The population dynamics of type-1 cells are unaffected by type-2
cells and, treating differentiation as death, behave as a single
type process. If the starting population only has type-2 cells, the
system is again in the single type setting. Thus the interesting
setup is when the system is initiated with cells of type-1 and
queries are of the population size and average generation of
type-2 cells.

Let $\mathbbm{P}_i$ and $\mathbbm{E}_i$ denote the probability and
the expectation conditional on the population starting with a single
cell of type $i\in\{1,2\}$. The growth of the type-2 population size
given one initial type-1 cell, $Z_2(t)$ under $\P_1$, is studied
in~\cite{jagers1969proportions}. Those results can be immediately
applied to study $Z_2^+(t)$, given the first cell is type-1 and
label-positive. Analogous results for $G_2(t)$ can be obtained by repeating
the steps made in the single type case. In particular, adapting the
integral equation~\eqref{eq:probability generating function} to the
two-type problem, using Lemma~\ref{lem:convergenza uniforme} and
Theorem~\ref{thm:renewal theorem} we can establish the growth rates
of $\E_1(G_2(t)Z_2(t))$, $\E_1(G_2(t)^2)$,  $\E_1(Z_2(t)Z_2(t+\tau))$,
$\E_1(G_2(t)Z_2(t+\tau))$, $\E_1(G_2(t+\tau)Z_2(t))$, and
$\E_1(G_2(t)G_2(t+\tau))$. These results enable us to conclude the
mean square limit of $G_2(t)$ under $\P_1$. Stepping from the $L^2$
result to the almost sure one is achieved in the same way as
Theorem~\ref{th:G(t) almost sure}. As this line of reasoning is
essentially a replication of what is done in the single type case,
the details are omitted. From these, starting with one label-positive
type-1 cell, the in-expectation result relating the average generation
to the proportion of labelled cells follows immediately:
\begin{align*}
\lim_{t\to\infty} \frac{\E_1(G_2(t))}{t \E_1(Z_2(t))} = 
-\lim_{p\to0}\lim_{t\to\infty}\frac{1}{pt}\log\left(\frac{\E_1(Z^+_2(t))}{\E_1(Z_2(t))}\right). 
\end{align*}
This equation says that, on average, the average generation of the
type-2 population
can be determined from averages of the delabelling proportion. To
obtain stronger convergence results, one notes that a combination
of \cite[Theorems 19.1 and 21.1]{harris1963theory}, Theorem~\ref{th:L2
convergence}, and Theorem~\ref{th:G(t) almost sure} gives that
\begin{equation}\label{eq:Z_i_and_G_i_two_types}
    \lim_{t \to \infty} \frac{Z_i(t)}{c_ie^{\alpha_i t}} 
	\overset{L^2,a.s.}{=} \mathcal{Z}_i 
	\quad \text{and} \quad 
	\lim_{t \to \infty} \frac{G_i(t)}{d_i te^{\alpha_i t}} 
	\overset{L^2,a.s.}{=}  \mathcal{Z}_i \quad \text{under $\mathbbm{P}_i$},
\end{equation}
where
\begin{equation*}
    c_1=\frac{h_1-1}{h_1^2 \alpha_1 \int_0^\infty t e^{-\alpha_1 t} d\mathbbm{P}(L_1\leq t)}, \qquad 
    c_2=\frac{\mu-1}{\mu^2 \alpha_2 \int_0^\infty t e^{-\alpha_2 t} d\mathbbm{P}(L_2\leq t)},
\end{equation*}
$d_1=c_1 h_1\alpha_1' $, $d_2=c_2 \mu \alpha_2'$, and 
assuming $\int_0^{\infty} \mathbb{E}[(G_i(t)/(d_i te^{\alpha_i t}) -\mathcal{Z}_i)^2] dt < \infty$ for the almost sure results concerning $\{G_i(t)\}$ in~\eqref{eq:Z_i_and_G_i_two_types}. 
Moreover, from~\cite{weber2016inferring} we have also that, if $\lim_{t\to \infty}Z_i^+(t) > 0$ 
\begin{equation*}
    \lim_{p\to 0}\lim_{t\to \infty} -\frac{1}{pt} \log\left(\frac{Z_i^+(t)}{Z_i(t)}\right)\overset{a.s.}{=}
    \begin{cases}
    h_1\alpha'_1 & \text{if $i=1$}\\
    \mu\alpha'_2 & \text{if $i=2$}
    \end{cases}
  \quad \text{under $\mathbbm{P}_{i}$,}
\end{equation*}
where we supposed that the first cell is label positive.

We present two sets of results depending on whether $\alpha_1>\alpha_2$ or
vice versa. If $\alpha_1< \alpha_2$, which would model, for example,
the creation of cancer cells, the growth rate of the type-2 cells
is greater than the type-1 cells and their average generation is
determined by the derivative of the latter Malthus parameter.

\begin{proposition}[$\alpha_1< \alpha_2$]\label{prop:multitype_alpha1<alpha2}  
If $(\partial/\partial x_i x_j) \rho_1(1,1)$, for $1\leq i\leq j \leq 2$, and $(\partial/\partial x^2)\rho_2(1)$ are finite, we have that 
\begin{equation}\label{eq:Z_G_multitype_12}
    \lim_{t \to \infty} \frac{Z_2(t)}{c_{1,2}e^{\alpha_2 t}} \overset{L^2, a.s.}{=} \mathcal{W}, \quad \text{and} \quad \lim_{t \to \infty} \frac{G_2(t)}{d_{1,2} te^{\alpha_2 t}} \overset{L^2}{=}  \mathcal{W} \quad \text{under $\mathbbm{P}_1$},
\end{equation}
where
\begin{equation}\label{eq:c12,d12}
    c_{1,2}=\frac{h_2 c_2 \int_0^\infty e^{-\alpha_2 t}d\mathbbm{P}(L_1 \leq t)}{1-h_1\int_0^\infty e^{-\alpha_2 t}d\mathbbm{P}(L_1\leq t)}, \qquad 
    d_{1,2}=c_{1,2} \mu \alpha_2',
\end{equation}
and $\mathcal{W}$ is a non-negative random variable such that 
$\P_1(\mathcal{W}=0)=\P_1(\lim_{t\to \infty}Z_1(t) = 0, \lim_{t\to \infty}Z_{2}(t) = 0)$ and $\E_1(\mathcal{W})=1$.

If $\int_0^{\infty} \E_1[(G_2(t)/(d_{1,2} te^{\alpha_2 t})
-\mathcal{W})^2] dt < \infty$, the second limit in
\eqref{eq:Z_G_multitype_12} is also true almost surely.
Assuming the initial cell is of type-1, i.e. $Z_1^+(0)=1$ and
$Z_2(0)=G_1(0)=G_2(0)=0$, we have 
\begin{equation*}
    \lim_{t \to \infty}  \frac{G_2(t)}{tZ_2(t)}\overset{a.s.}{=}\mu \alpha_2' \overset{a.s.}{=} \lim_{p\to 0}\lim_{t\to \infty} -\frac{1}{pt} \log\left(\frac{Z_2^+(t)}{Z_2(t)}\right) \quad \text{if $\lim_{t\to \infty}Z_2^+(t) > 0$}.
\end{equation*}
\end{proposition}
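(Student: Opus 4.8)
The plan is to run the single-type programme of Section~\ref{sec:main} under the conditional law $\P_1$, exploiting that $\alpha_1<\alpha_2$ makes the type-2 population grow at the strictly larger rate $\alpha_2$ and hence dominate the asymptotics. The $Z_2$ half of \eqref{eq:Z_G_multitype_12}, namely $Z_2(t)/(c_{1,2}e^{\alpha_2 t})\xrightarrow{L^2,a.s.}\mathcal{W}$ together with $\E_1(\mathcal{W})=1$ and the identification of the zero set of $\mathcal{W}$ with the joint-extinction event, is precisely the sample-path population-size theorem of Jagers~\cite{jagers1969proportions} for this two-type model, so I would simply invoke it. The only genuinely new object is the total generation $G_2(t)$.

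For $G_2(t)$ I would first adapt Proposition~\ref{prop:F} to the two-type setting: conditioning on the lifetime $L_1$ and offspring vector $N_1=(N_{11},N_{12})$ of the initial type-1 cell yields a renewal equation for the joint PGF of $(G_2(t),G_2(t+\tau),Z_2(t),Z_2(t+\tau))$ under $\P_1$, in which the $N_{11}$ type-1 daughters contribute $\P_1$-subtrees (the recursive part) and the $N_{12}$ type-2 daughters contribute $\P_2$-subtrees (a boundary part). Differentiating this equation twice and normalising by $e^{\alpha_2 t}e^{\alpha_2(t+\tau)}$ produces integral equations of the form \eqref{eq:integral equation}. The decisive structural point is that all type-1 recursion kernels, weighted by the appropriate power of $e^{-\alpha_2 u}$, are defective, since $h_1\int_0^\infty e^{-\alpha_2 u}d\P(L_1\le u)<h_1\int_0^\infty e^{-\alpha_1 u}d\P(L_1\le u)=1$ by \eqref{eq:alpha_1-alpha_2}; this is exactly the setting of Theorem~\ref{thm:renewal theorem} and it accounts for the denominator in $c_{1,2}$. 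Applying Theorem~\ref{thm:renewal theorem}, Lemma~\ref{lem:convergenza uniforme} and Corollary~\ref{cor:1} exactly as in Lemmas~\ref{lem:Z(t+tau)G(t)}--\ref{lem:G(t+tau)G(t)} gives the growth rates of $\E_1(G_2(t)^2)$, $\E_1(G_2(t)Z_2(t))$ and the cross-time second moments; the Cauchy criterion of Corollary~\ref{cor:Cauchy} then yields the $L^2$ limit $G_2(t)/(d_{1,2}te^{\alpha_2 t})\to\mathcal{W}'$ for some $\mathcal{W}'\in L^2$ with $\E_1(\mathcal{W}')=1$.

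The main obstacle is to identify $\mathcal{W}'$ with the $Z_2$-limit $\mathcal{W}$ almost surely; this is the two-type analogue of Theorem~\ref{th:V=W}. Passing to the limit $t\to\infty$ in the two-type PGF recursion, after the substitutions $s_1=\exp(-s/[d_{1,2}te^{\alpha_2 t}])$ and $r_1=\exp(-r/[c_{1,2}e^{\alpha_2 t}])$, gives a fixed-point equation for the limiting joint MGF $\E_1(e^{-s\mathcal{W}'-r\mathcal{W}})$; the type-2 daughters enter only through a boundary term governed by the type-2 limiting MGF. The crucial input is that, under $\P_2$, the count and the generation of the type-2 subtrees share the same random prefactor $\mathcal{Z}_2$ (Theorem~\ref{th:V=W} applied to the type-2 process), so this boundary term is identical in the $s$-slot and the $r$-slot. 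Consequently the jointly-defined pair satisfies
\begin{equation*}
(\mathcal{W},\mathcal{W}')\overset{D}{=}e^{-\alpha_2 L_1}\Big(\sum_{k=1}^{N_{11}}(\mathcal{W},\mathcal{W}')^{(k)}+\tfrac{c_2}{c_{1,2}}\sum_{l=1}^{N_{12}}(\mathcal{Z}_2,\mathcal{Z}_2)^{(l)}\Big),
\end{equation*}
with independent subtree copies; the identical boundary terms cancel on taking the difference, so $D:=\mathcal{W}-\mathcal{W}'$ obeys the homogeneous smoothing equation $D\overset{D}{=}e^{-\alpha_2 L_1}\sum_{k=1}^{N_{11}}D^{(k)}$. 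Since $\E_1(D)=0$ and $D\in L^2$, taking second moments gives $\E_1(D^2)=h_1\int_0^\infty e^{-2\alpha_2 u}d\P(L_1\le u)\,\E_1(D^2)$, and the defectiveness $h_1\int_0^\infty e^{-2\alpha_2 u}d\P(L_1\le u)<1$ forces $\E_1(D^2)=0$, i.e. $\mathcal{W}'=\mathcal{W}$ a.s. I expect the delicate part to be justifying the passage to the limit that yields the joint fixed-point equation and the appearance of the single type-2 prefactor in both slots; equivalently, one may mirror Theorem~\ref{th:V=W} verbatim, using a uniqueness statement in the spirit of \cite[Theorem 4.1]{levinson1960limiting} adapted to the defective two-type kernel.

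Finally, the almost-sure upgrade of the $G_2$ limit is obtained by repeating the monotonicity argument of Theorem~\ref{th:G(t) almost sure} --- decomposing $G_2=G_{2,B}-G_{2,D}$ into contributions from cells born and dead by time $t$ to obtain monotone sample paths --- under the stated hypothesis $\int_0^\infty\E_1[(G_2(t)/(d_{1,2}te^{\alpha_2 t})-\mathcal{W})^2]\,dt<\infty$. With both $Z_2(t)/(c_{1,2}e^{\alpha_2 t})$ and $G_2(t)/(d_{1,2}te^{\alpha_2 t})$ then converging almost surely to the same $\mathcal{W}$, which is strictly positive on $\{\lim_{t\to\infty}Z_2^+(t)>0\}$, the Continuous Mapping Theorem gives
\begin{equation*}
\frac{G_2(t)}{tZ_2(t)}=\frac{d_{1,2}}{c_{1,2}}\cdot\frac{G_2(t)/(d_{1,2}te^{\alpha_2 t})}{Z_2(t)/(c_{1,2}e^{\alpha_2 t})}\xrightarrow[t\to\infty]{a.s.}\frac{d_{1,2}}{c_{1,2}}=\mu\alpha_2',
\end{equation*}
and combining with the delabelling identity for $Z_2^+$ recalled from \cite{weber2016inferring} completes the proof.
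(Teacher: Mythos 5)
Your proposal is correct and, for most of its length, follows the same programme the paper sketches: invoke Jagers' two-type results for the $Z_2$ half, adapt the PGF integral equation of Proposition~\ref{prop:F} to the two-type setting, exploit that the type-1 kernel $h_1 e^{-\alpha_2 u}\,d\P(L_1\le u)$ is defective when $\alpha_1<\alpha_2$ so that Theorem~\ref{thm:renewal theorem}, Lemma~\ref{lem:convergenza uniforme} and Corollary~\ref{cor:1} give the second-moment growth rates and hence the $L^2$ limit, upgrade to almost sure convergence by the monotone decomposition of Theorem~\ref{th:G(t) almost sure}, and finish with the Continuous Mapping Theorem plus the delabelling identity. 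Where you genuinely depart from the paper is the identification of the two limits: the paper (whose proof is an explicit deferral to ``replication of the single type case'') would mirror Theorem~\ref{th:V=W}, i.e.\ derive a functional equation for the limiting joint MGF and appeal to a Levinson-type uniqueness theorem and the correlation-coefficient argument, whereas you decompose the process along the first cell's branching, use Theorem~\ref{th:V=W} only for the \emph{single-type} type-2 subtrees to make the boundary terms identical in both slots, and then kill the difference $D=\mathcal{W}-\mathcal{W}'$ by the $L^2$ contraction $\E_1(D^2)=h_1\int_0^\infty e^{-2\alpha_2 u}d\P(L_1\le u)\,\E_1(D^2)$ with a strictly defective constant. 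This is arguably the better route: it is self-contained and sidesteps the need for a uniqueness theorem adapted to the inhomogeneous, defective two-type functional equation, which is exactly the point you flag as delicate in the MGF alternative. One caution: as written, your fixed-point display carries $\overset{D}{=}$, but equality in distribution does not permit subtracting the common type-2 boundary term; the argument needs the \emph{pathwise} (almost sure) identity, with the same subtree random variables appearing in both coordinates. That is indeed what your derivation provides --- the decomposition along the first branching holds realisation by realisation, and each normalised subtree term converges in $L^2$ (under $\P_1$ for type-1 daughters, under $\P_2$ for type-2 daughters, the latter with common prefactor $\mathcal{Z}_2$ by Theorem~\ref{th:V=W}) --- so you should simply state and use the equation with $\overset{a.s.}{=}$ in place of $\overset{D}{=}$.
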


If $\alpha_2 < \alpha_1$, as might occur with the production of terminally
differentiated cells, the growth rate of the type-1 cells is greater
than the type-2 cells and their average generation is determined
by the derivative of the former Malthus parameter. That is, in this
setting, so long as the type-1 population continues to exist, the
average generation of the type-2 cells is dominated by immigrants
from the type-1 population.
\begin{proposition}[$\alpha_2< \alpha_1$]\label{prop:multitype_alpha2<alpha1} 
If $(\partial/\partial x_i x_j) \rho_1(1,1)$, for $1\leq i\leq j \leq 2$, and $(\partial/\partial x^2)\rho_2(1)$ are finite, we have that 
\begin{equation}\label{eq:Z_G_multitype_21}
    \lim_{t \to \infty} \frac{Z_2(t)}{c_{2,1}e^{\alpha_1 t}} \overset{L^2,a.s.}{=} \mathcal{Z}_2 \quad \text{and} \quad \lim_{t \to \infty} \frac{G_2(t)}{d_{2,1} te^{\alpha_1 t}} \overset{L^2}{=}  \mathcal{Z}_2 \quad \text{under $\mathbbm{P}_1$},
\end{equation}
where
\begin{equation}\label{eq:c21,d21}
    c_{2,1}=\frac{h_2(1-\int_0^\infty e^{-\alpha_1 t}d\mathbbm{P}(L_2 \leq t))}{h_2^2 \alpha_1 (1-\mu \int_0^\infty e^{-\alpha_1 t}d\mathbbm{P}(L_2 \leq t))}, \qquad
    d_{2,1}=c_{2,1} h_1 \alpha_1',
\end{equation}
and $\mathcal{Z}_2$ random variable defined in~\eqref{eq:Z_i_and_G_i_two_types} with $\P_1(\mathcal{Z}_2=0)=\P_1(\lim_{t \to \infty} Z_1(t)=0)$ and $\E_1(\mathcal{Z}_2)=1$.

If $\int_0^{\infty} \E_1[(G_2(t)/(d_{2,1} te^{\alpha_2 t}) -\mathcal{Z}_2)^2] dt < \infty$, the second limit in \eqref{eq:Z_G_multitype_21} is also true almost surely.  Assuming the initial cell is of type-1, i.e. $Z_1^+(0)=1$ and
$Z_2(0)=G_1(0)=G_2(0)=0$, we have
\begin{equation*}
    \lim_{t \to \infty}  \frac{G_2(t)}{tZ_2(t)}\overset{a.s.}{=}h_1 \alpha_1' \overset{a.s.}{=} \lim_{p\to 0}\lim_{t\to \infty} -\frac{1}{pt} \log\left(\frac{Z_2^+(t)}{Z_2(t)}\right) \quad \text{if $\lim_{t\to \infty}Z_1^+(t) > 0$}.
\end{equation*}
\end{proposition}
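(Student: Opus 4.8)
The plan is to transcribe the single-type programme of Section~\ref{sec:main} to the two-type setting under $\P_1$, exploiting that $\alpha_2<\alpha_1$ makes the type-2 self-renewal \emph{defective} so that Theorem~\ref{thm:renewal theorem} applies directly. First I would write the two-type analogue of the generating-function identity of Proposition~\ref{prop:F} for the vector $(G_2(t),G_2(t+\tau),Z_2(t),Z_2(t+\tau))$ under $\P_1$, by conditioning on the lifetime $L_1$ and offspring $N_1$ of the initial type-1 cell. Differentiating in the appropriate pairs of dummy variables and evaluating at $1$ produces renewal equations for $\E_1(Z_2(t)Z_2(t+\tau))$, $\E_1(G_2(t)Z_2(t+\tau))$, $\E_1(G_2(t+\tau)Z_2(t))$, $\E_1(G_2(t)G_2(t+\tau))$ and $\E_1(G_2(t)^2)$, exactly parallel to Lemmas~\ref{lem:Z(t+tau)G(t)} and~\ref{lem:G(t+tau)G(t)}. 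After multiplying by $e^{-\alpha_1 t}e^{-\alpha_1(t+\tau)}$, each equation has homogeneous kernel the measure $\mu\, e^{-2\alpha_1 u}\,d\P(L_2\le u)$, which is defective because $\mu\int_0^\infty e^{-\alpha_1 u}\,d\P(L_2\le u)<\mu\int_0^\infty e^{-\alpha_2 u}\,d\P(L_2\le u)=1$ by \eqref{eq:alpha_1-alpha_2} and $\alpha_1>\alpha_2$; the forcing term $f(t,\tau)$ is built from the already-known single-type limits of the type-1 moments and of the type-2 moments driven by type-1 immigration, and grows like $t(t+\tau)$ after scaling. Lemma~\ref{lem:convergenza uniforme}, Theorem~\ref{thm:renewal theorem} and Corollary~\ref{cor:1} then give the limits of all these cross-moments uniformly in $\tau$, whence the normalised process $G_2(t)/(d_{2,1}te^{\alpha_1 t})$ is Cauchy in $L^2$ (as in Corollary~\ref{cor:Cauchy}) and converges in $L^2$ to a limit of mean $1$, with the constants $c_{2,1}$ and $d_{2,1}=c_{2,1}h_1\alpha_1'$ coming out as in \eqref{eq:c21,d21}.

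Next I would show, following the scheme of Theorem~\ref{th:V=W}, that this $L^2$-limit coincides with the almost sure limit $\mathcal{Z}_2$ of $Z_2(t)/(c_{2,1}e^{\alpha_1 t})$ supplied by \cite{jagers1969proportions} and \eqref{eq:Z_i_and_G_i_two_types}. Starting from the two-type generating-function identity, substituting $s_1=\exp(-s/(d_{2,1}te^{\alpha_1 t}))$ and $r_1=\exp(-r/(c_{2,1}e^{\alpha_1 t}))$ and passing to the limit $t\to\infty$ yields a functional equation for the joint Laplace transform of the limit pair; setting $r=0$, then $s=0$, then $r=s$ produces the marginal equations for the normalised $G_2$ limit, for $\mathcal{Z}_2$, and for their sum. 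Invoking the uniqueness theorem for such fixed-point equations (\cite{levinson1960limiting}) together with \cite[Theorem~2]{mukherjea2006note} gives equality in distribution of the two marginals and of the sum with $2\mathcal{Z}_2$, which forces unit correlation and hence that the $L^2$-limit of the normalised $G_2$ equals $\mathcal{Z}_2$ almost surely. In this regime the randomness of $\mathcal{Z}_2$ is inherited from the type-1 population through the forcing term, consistent with the stated characterisation $\P_1(\mathcal{Z}_2=0)=\P_1(\lim_{t\to\infty}Z_1(t)=0)$.

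To upgrade the $G_2$ limit from $L^2$ to almost sure under the hypothesis $\int_0^\infty\E_1[(G_2(t)/(d_{2,1}te^{\alpha_1 t})-\mathcal{Z}_2)^2]\,dt<\infty$, I would rerun the monotonicity argument of Theorem~\ref{th:G(t) almost sure}, decomposing $G_2(t)=G_{2,B}(t)-G_{2,D}(t)$ into the total generation of type-2 cells born and of type-2 cells dead by time $t$, each non-decreasing, so that the Fubini-plus-monotonicity bootstrap converting the integrable $L^2$ bound into almost sure convergence applies verbatim. Finally, on the survival event $\{\lim_{t\to\infty}Z_1^+(t)>0\}\subseteq\{\mathcal{Z}_2>0\}$ the Continuous Mapping Theorem gives $G_2(t)/(tZ_2(t))=(d_{2,1}/c_{2,1})\,[G_2(t)/(d_{2,1}te^{\alpha_1 t})]/[Z_2(t)/(c_{2,1}e^{\alpha_1 t})]\to d_{2,1}/c_{2,1}=h_1\alpha_1'$, since the common factor $\mathcal{Z}_2$ cancels; combining with the delabelling limit quoted immediately before the proposition closes the chain of equalities.

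The main obstacle I anticipate is the shared-prefactor step. Unlike the single-type Theorem~\ref{th:V=W}, where the randomness lives in an internal martingale and the limiting transform solves the clean self-map \eqref{eq: MGF W}, here the randomness of both $Z_2$ and $G_2$ enters only through the type-1 forcing term, so the limiting Laplace-transform equation is inhomogeneous in that contribution and its uniqueness must be re-established in this immigration-driven form before the correlation-one conclusion can be drawn. Verifying that the forcing terms of the $Z_2$ and the $G_2$ equations carry one and the \emph{same} type-1 limit variable, rather than merely equal-in-distribution copies, is the delicate point that lifts the identification from equality in law to the required almost sure equality with $\mathcal{Z}_2$.
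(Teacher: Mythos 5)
Your overall route --- a two-type analogue of Proposition~\ref{prop:F}, renewal equations for the cross moments, the defective renewal machinery of Section~\ref{ch:lemmas}, an $L^2$ Cauchy argument, the monotonicity bootstrap of Theorem~\ref{th:G(t) almost sure}, and a continuous-mapping conclusion --- is exactly what the paper prescribes (it omits the details, declaring them a replication of the single-type case). However, your renewal set-up contains a structural error. Conditioning on the lifetime and offspring of the initial type-1 cell gives
\begin{equation*}
F_1(s,r,t)=\P(L_1>t)+\int_0^t \rho_1\bigl(F_1(s,sr,t-u),\,F_2(s,sr,t-u)\bigr)\,d\P(L_1\le u),
\end{equation*}
where $F_i$ denotes the joint PGF under $\P_i$, so every $\E_1$-moment equation obtained by differentiation has its homogeneous kernel carried by the \emph{type-1} data: after scaling it is $h_1e^{-2\alpha_1 u}\,d\P(L_1\le u)$ for the second moments, not $\mu e^{-2\alpha_1 u}\,d\P(L_2\le u)$ as you assert. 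The pair $(\mu,L_2)$ cannot appear in the homogeneous part of a $\P_1$ equation, because under one-way differentiation a type-2 subtree never reproduces the $\P_1$ law; $(\mu,L_2)$ enters only through the forcing, i.e. the $\P_2$-moments of the immigrant subtrees (carrying the coefficient $h_2$), and through the constants via Laplace transforms such as $\int_0^\infty \E_2(Z_2(s))e^{-\alpha_1 s}\,ds$. It is there --- in the finiteness of those integrals and in the factor $1-\mu\int_0^\infty e^{-\alpha_1 t}\,d\P(L_2\le t)$ of \eqref{eq:c21,d21} --- that $\alpha_2<\alpha_1$ is genuinely used; defectiveness of the correct second-moment kernel follows from \eqref{eq:alpha_1-alpha_2} exactly as in \eqref{eq:P_difective} and owes nothing to the ordering of the Malthus parameters. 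Note also that the first-moment equations, scaled by $e^{-\alpha_1 t}$, have type-1 kernel of total mass exactly one, so for $c_{2,1}$ and $d_{2,1}$ you need the proper-measure renewal theorem of \cite{asmussen1998probabilistic} as used in \cite{weber2016inferring}, not Theorem~\ref{thm:renewal theorem}. As literally written, the equation you propose to feed into Theorem~\ref{thm:renewal theorem} is not satisfied by the $\P_1$ moments, so the argument halts at its first quantitative step; the repair is to exchange the roles of the two kernels as above.

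Your flagged ``main obstacle'' --- re-establishing uniqueness for an inhomogeneous, immigration-driven limiting MGF equation --- is a detour you do not need, and leaving it unresolved is the second genuine gap, since without the identification of the prefactor the final chain of equalities is not established. The cross moments you compute in your first paragraph already settle it: writing $\hat Z_2(t)=Z_2(t)/(c_{2,1}e^{\alpha_1 t})$ and $\hat G_2(t)=G_2(t)/(d_{2,1}te^{\alpha_1 t})$, once the (corrected) renewal analysis shows that $\E_1[\hat G_2(t)\hat Z_2(t+\tau)]$, $\E_1[\hat G_2(t)\hat G_2(t+\tau)]$ and $\E_1[\hat Z_2(t)\hat Z_2(t+\tau)]$ all converge, uniformly in $\tau$, to one and the same constant, you get directly $\E_1[(\hat G_2(t)-\hat Z_2(t))^2]\to 0$, so the $L^2$ limit of $\hat G_2$ is the almost sure limit $\mathcal{Z}_2$ of $\hat Z_2$ supplied by \cite{jagers1969proportions} and \eqref{eq:Z_i_and_G_i_two_types}. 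No functional-equation uniqueness, no correlation-one argument, and no ``same copy versus equal in law'' discussion is required; the characterisation $\P_1(\mathcal{Z}_2=0)=\P_1(\lim_{t\to\infty}Z_1(t)=0)$ then comes for free because the limit randomness is inherited from type 1 through the forcing. (The same three-moment observation would in fact shortcut the MGF argument of Theorem~\ref{th:V=W} in the single-type case, since Lemmas~\ref{lem:Z(t+tau)G(t)} and~\ref{lem:G(t+tau)G(t)} together with Harris's $\lim_t\E(\WZtime{t}^2)=k$ give $\E[(\WGtime{t}-\WZtime{t})^2]\to 0$.) With these two corrections, your remaining steps --- the monotone decomposition $G_2=G_{2,B}-G_{2,D}$ for the almost sure upgrade and the continuous-mapping conclusion on $\{\lim_{t\to\infty}Z_1^+(t)>0\}$ --- are sound and agree with the paper's intent.
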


\begin{figure}
\centering
\subfigure[normalised populations size ($\alpha_1<\alpha_2$)]{
\includegraphics[scale=0.4]{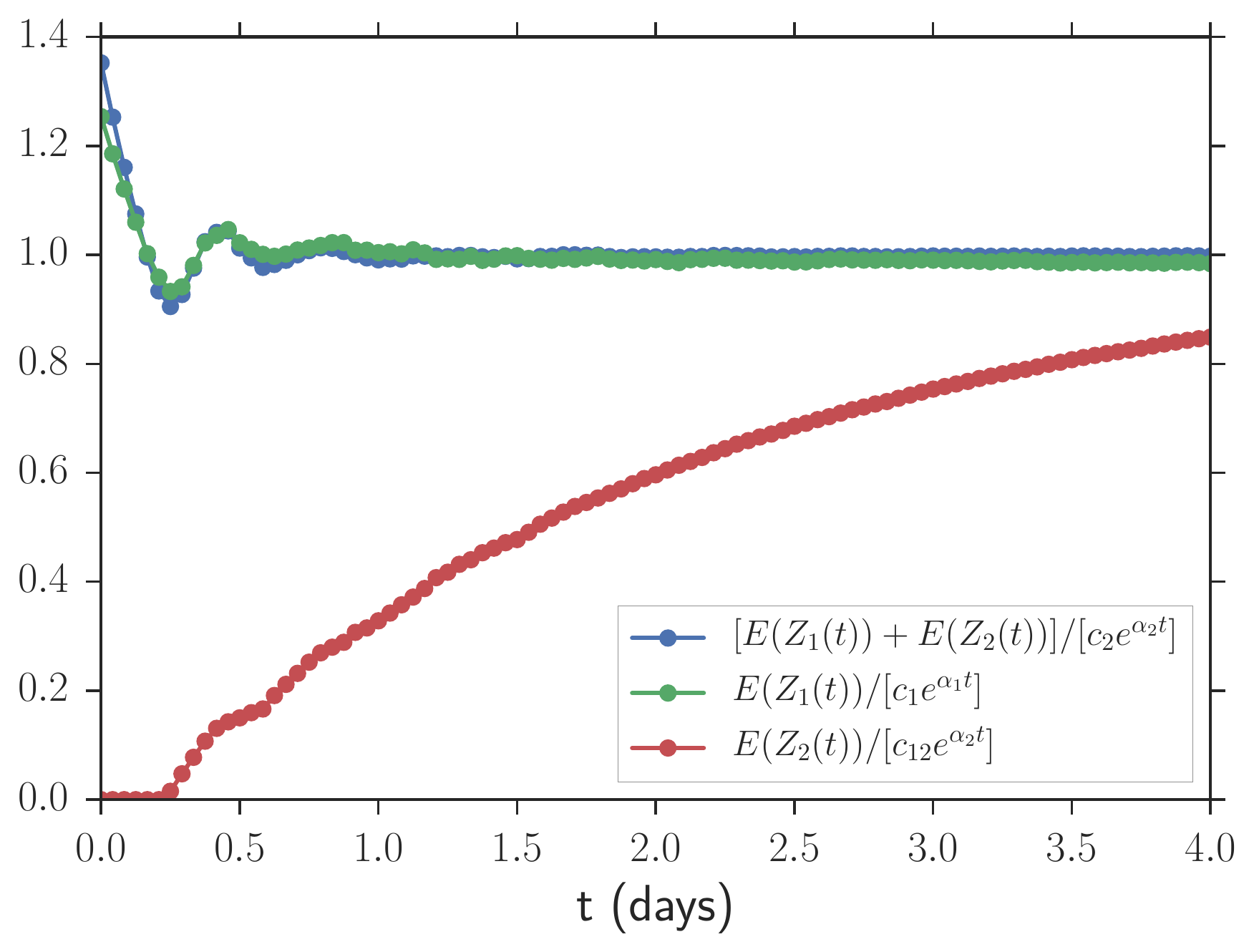}
}
\subfigure[normalised total generations ($\alpha_1<\alpha_2$)]{
\includegraphics[scale=0.4]{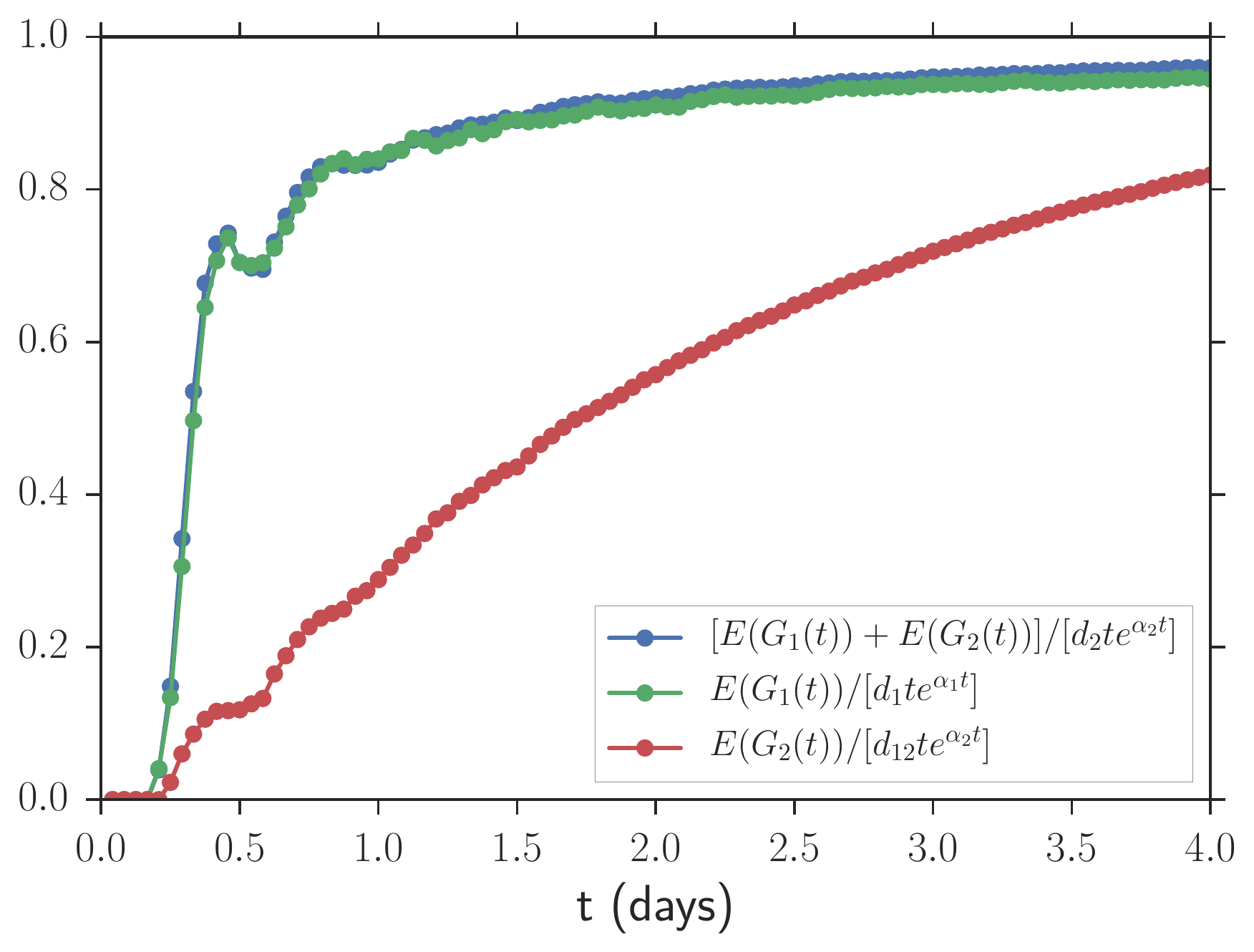}
}\\
\subfigure[normalised populations size ($\alpha_2<\alpha_1$)]{
\includegraphics[scale=0.4]{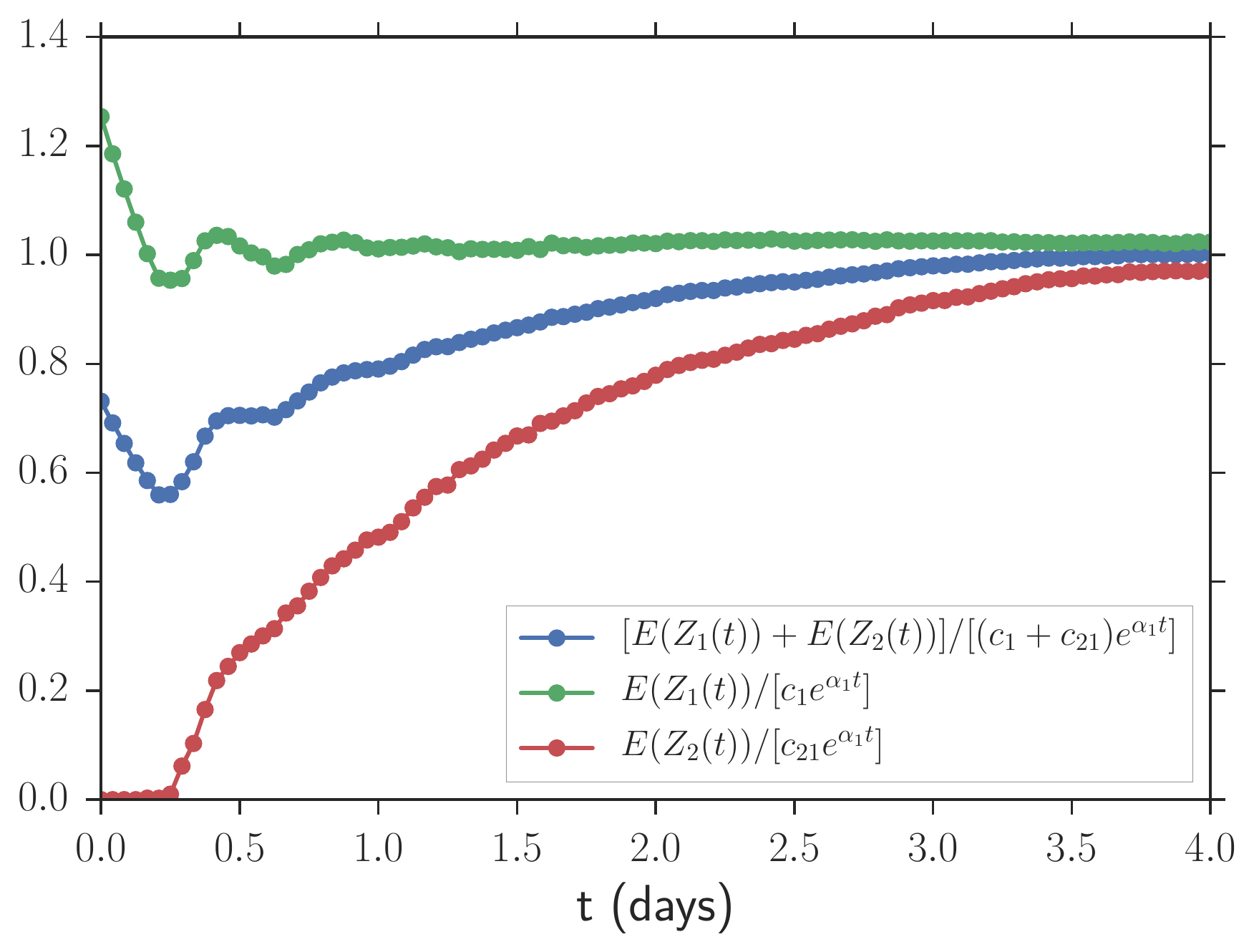}
}
\subfigure[normalised total generations ($\alpha_2<\alpha_1$)]{
\includegraphics[scale=0.4]{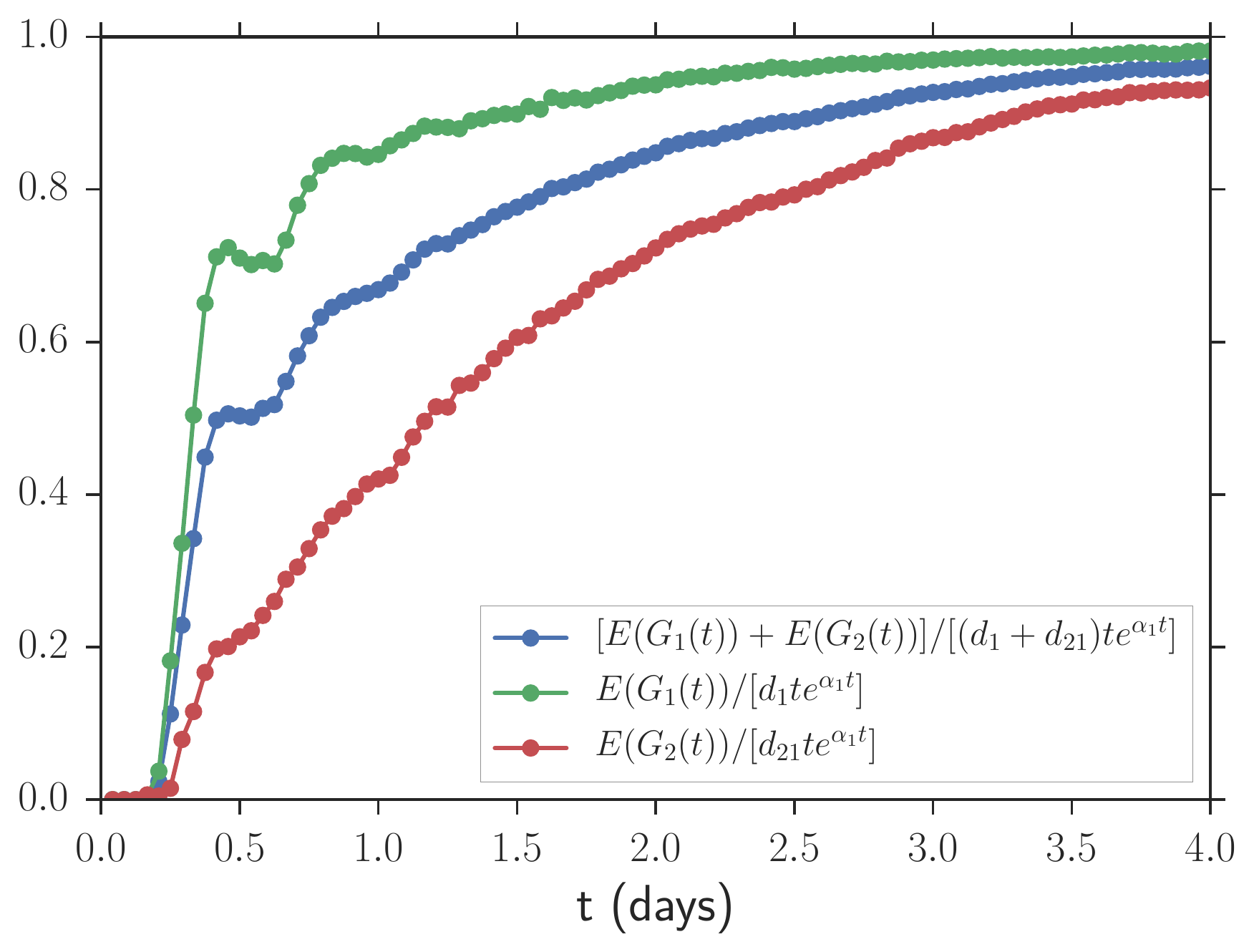}
}
\caption{{\bf Average growth rates of population sizes and total generations
of each type starting with a single type-1 cell and
using the scalings in Propositions~\ref{prop:multitype_alpha1<alpha2}
and~\ref{prop:multitype_alpha2<alpha1}}.  
Cells have lognormal
lifetime with mean 9.3 hours and standard deviation 2.54~\cite{Hawkins09}.
Type-1 cells give rise to type-1 cells with probability $5/6$ and
to type-2 cells with probability $1/6$. Means are computed averaging
the results of 1000 Monte Carlo simulations of populations growing
for four days.  (a)-(b) These illustrations are in the case
$\alpha_1<\alpha_2$ as both types of cells always have two offspring.
(c)-(d) These are in the setting $\alpha_2<\alpha_1$,
obtained by setting $N_1=2$ and $\P(N_2=0)=2/5=1-\P(N_2=2)$. 
}
\label{fig:multitype_Zi_Gi}
\end{figure}

We conclude the paper by presenting some simulated results that
illustrate the features of these two-type results, both for average
generation and for its inference.
Fig.~\ref{fig:multitype_Zi_Gi} provides average normalised paths
of the processes $Z_i(t)$ and $G_i(t)$. In
Fig.~\ref{fig:multitype_Zi_Gi}(a-b), $\alpha_1<\alpha_2$, but despite
the fact the type-2 population is the fastest growing on average,
it is the slowest one to converge. This occurs due to the random delay in
the production of any type-2 cells. Note also that the total
population of both type-1 and type-2 cells behave as a single-type
branching process with $N=2$ and log-normal lifetime 
distribution. Hence, the growth rates of $Z(t)=Z_1(t)+Z_2(t)$ and
$G(t)=G_1(t)+G_2(t)$ are the same as if the type-2 population was
started with one type-2 cell. In Fig.~\ref{fig:multitype_Zi_Gi}(c-d),
$\alpha_1>\alpha_2$. Here, the second population is dominated by
differentiation from the first cell type, with both populations have
the growth rate of the type-1 population. The behaviour of $Z(t)$
and $G(t)$ for the entire population is the sum of the
corresponding processes for the two types.

\begin{figure}
\centering
\subfigure[\label{fig:multitype_WZ_WG_a}]{
\includegraphics[scale=0.4]{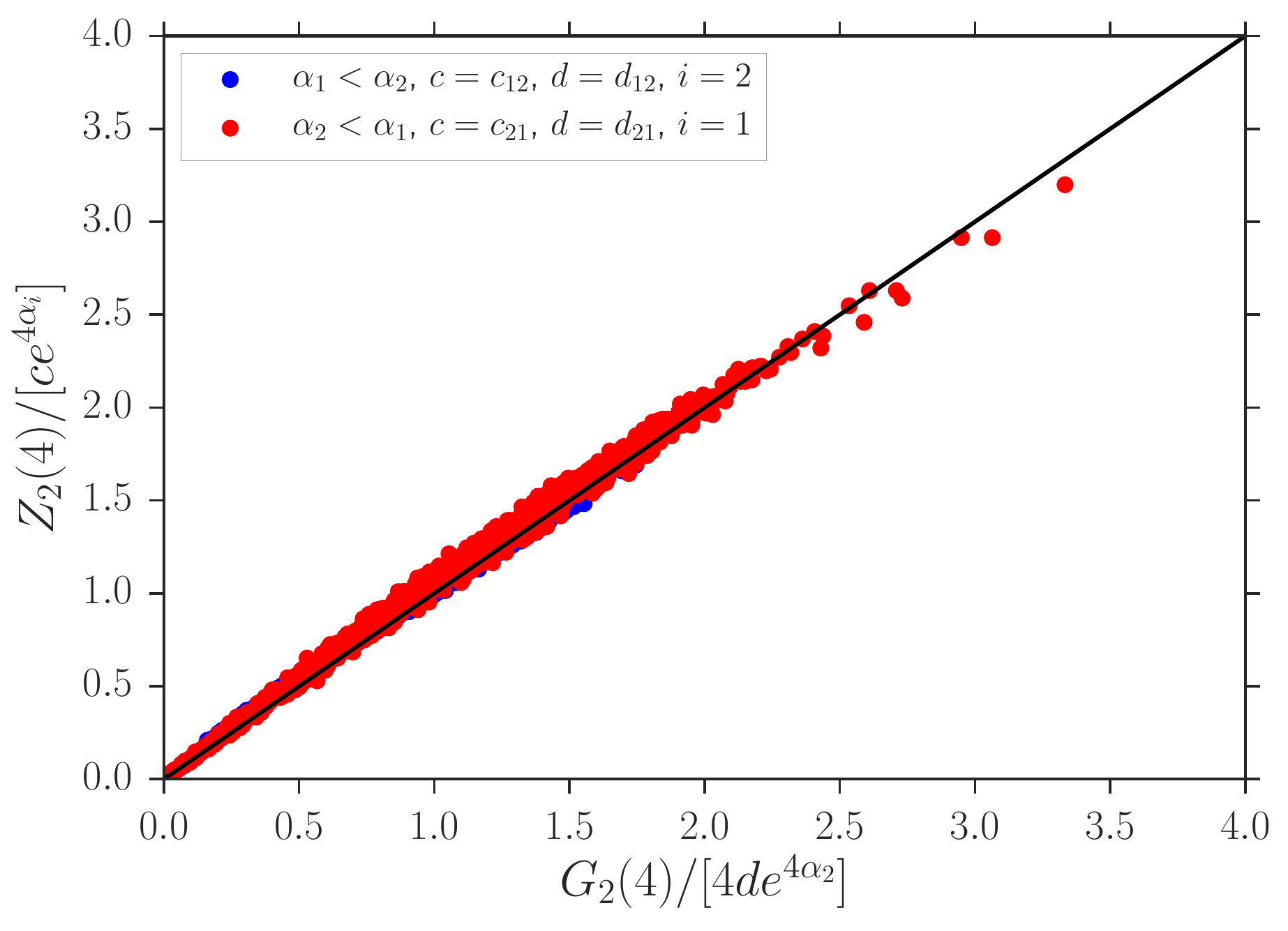}
}
\subfigure[\label{fig:multitype_WZ_WG_b}]{
\includegraphics[scale=0.4]{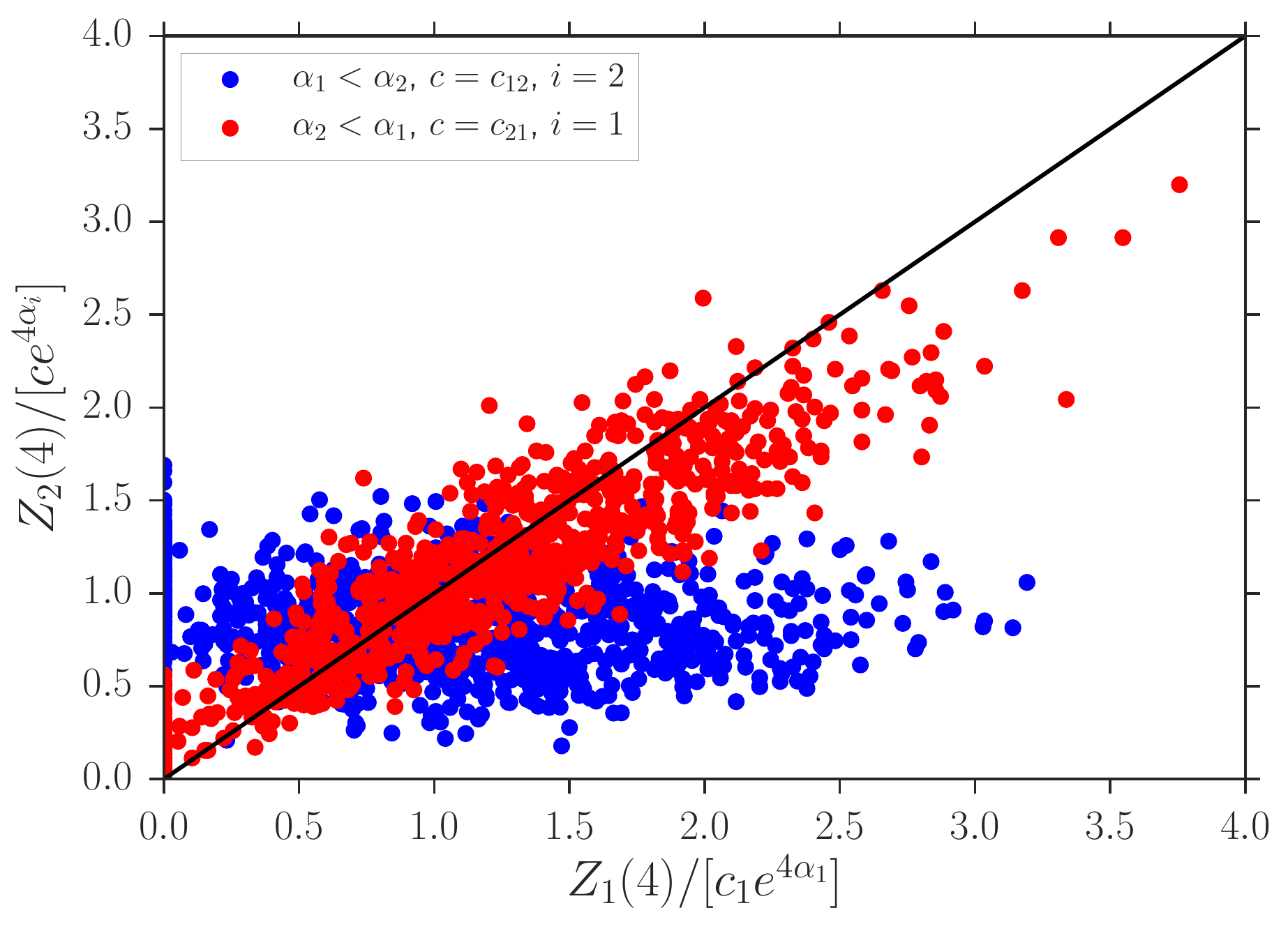}
}
\subfigure[\label{fig:multitype_WZ_WG_c}]{
\includegraphics[scale=0.4]{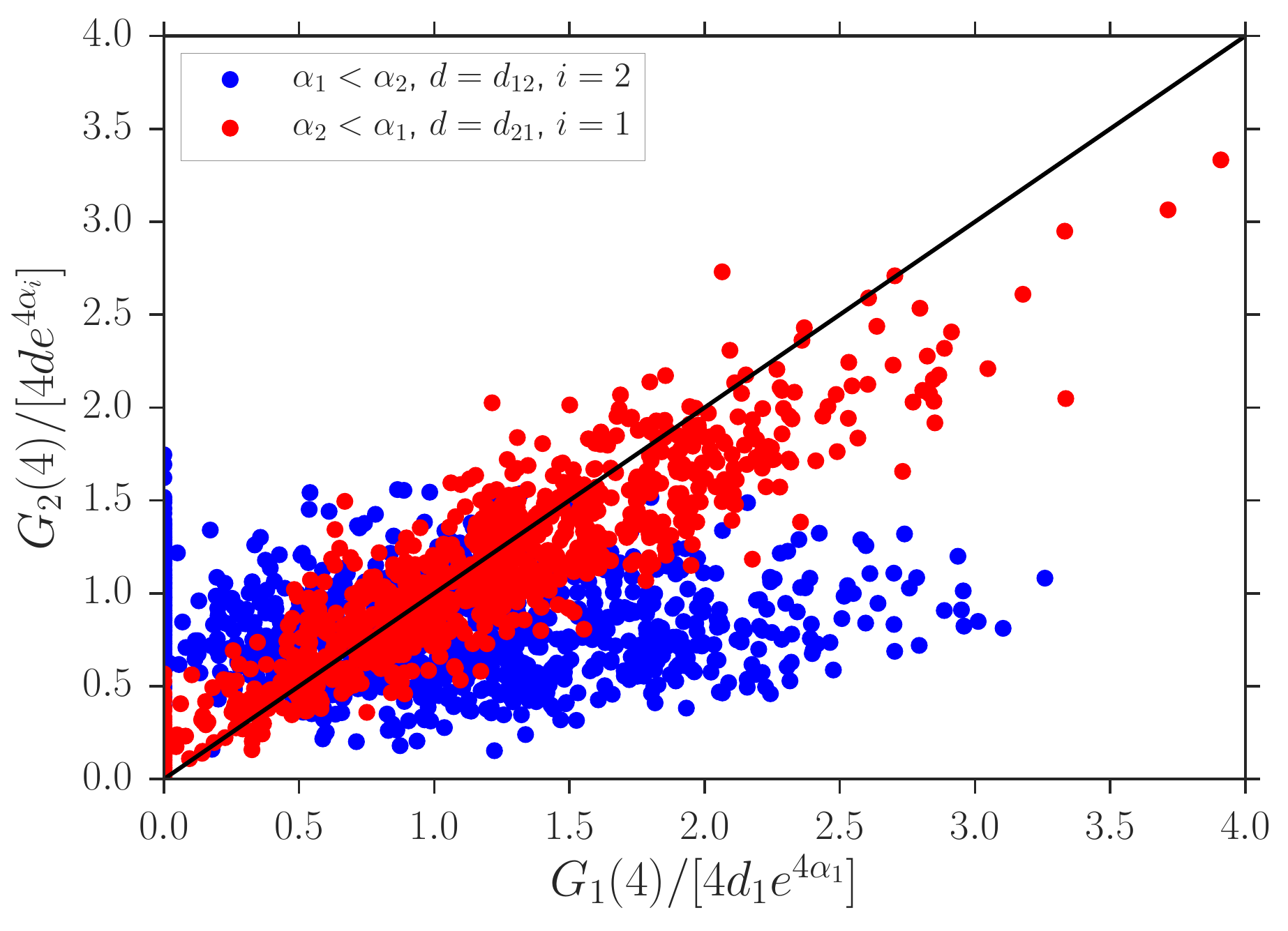}
}
\caption{{\bf Relationships in per-path randomness.}
Plots were created using
the same 1000 Monte Carlo simulations used to generate
Fig.~\ref{fig:multitype_Zi_Gi}. Blue points correspond to $\alpha_1<\alpha_2$, while red ones 
to $\alpha_2<\alpha_1$. 
(a) Scatter plot of normalised versions of $Z_2(t)$ and $G_2(t)$ is displayed at $t=4$
days. Pearson correlation coefficient for both blue and red points is $0.99$.
(b) Scatter plot of normalised versions of $Z_1(t)$ and $Z_2(t)$ is displayed at $t=4$
days. Pearson correlation coefficient for blue and red points is $-0.19$ and $0.94$, respectively.
(c) Scatter plot of normalised versions of $G_1(t)$ and $G_2(t)$ is displayed at $t=4$
days. Pearson correlation coefficient for blue and red points is $-0.09$ and $0.94$, respectively.
}
\label{fig:multitype_WZ_WG}
\end{figure}

Turning to the relatedness in random prefactors,
Fig.~\ref{fig:multitype_WZ_WG_a} is consistent with the deduction
that there is equality almost surely between the rescaled limit of
the population size and total generation of the second type.
Fig.~\ref{fig:multitype_WZ_WG_b} shows the prefactor for type-1
and type-2 population sizes. Consistent with results
in~\cite{jagers1969proportions}, red dots are suggestive that when
$\alpha_2<\alpha_1$ both normalised processes converge to the same
random variable. For $\alpha_1<\alpha_2$, however, this is not the
case for the blue dots and the random variables appear uncorrelated.
Fig.~\ref{fig:multitype_WZ_WG_c} is analogous to
Fig.~\ref{fig:multitype_WZ_WG_b} but for total generation, with the
same deduction as for the population size holding where when
$\alpha_1>\alpha_2$, the randomness is common to both types and
otherwise it is not.

\begin{figure}
\centering
\subfigure[\label{fig:multitype_estimator_paths_a}$\alpha_1<\alpha_2$, $Z_1(0)=1$]{
\includegraphics[scale=0.4]{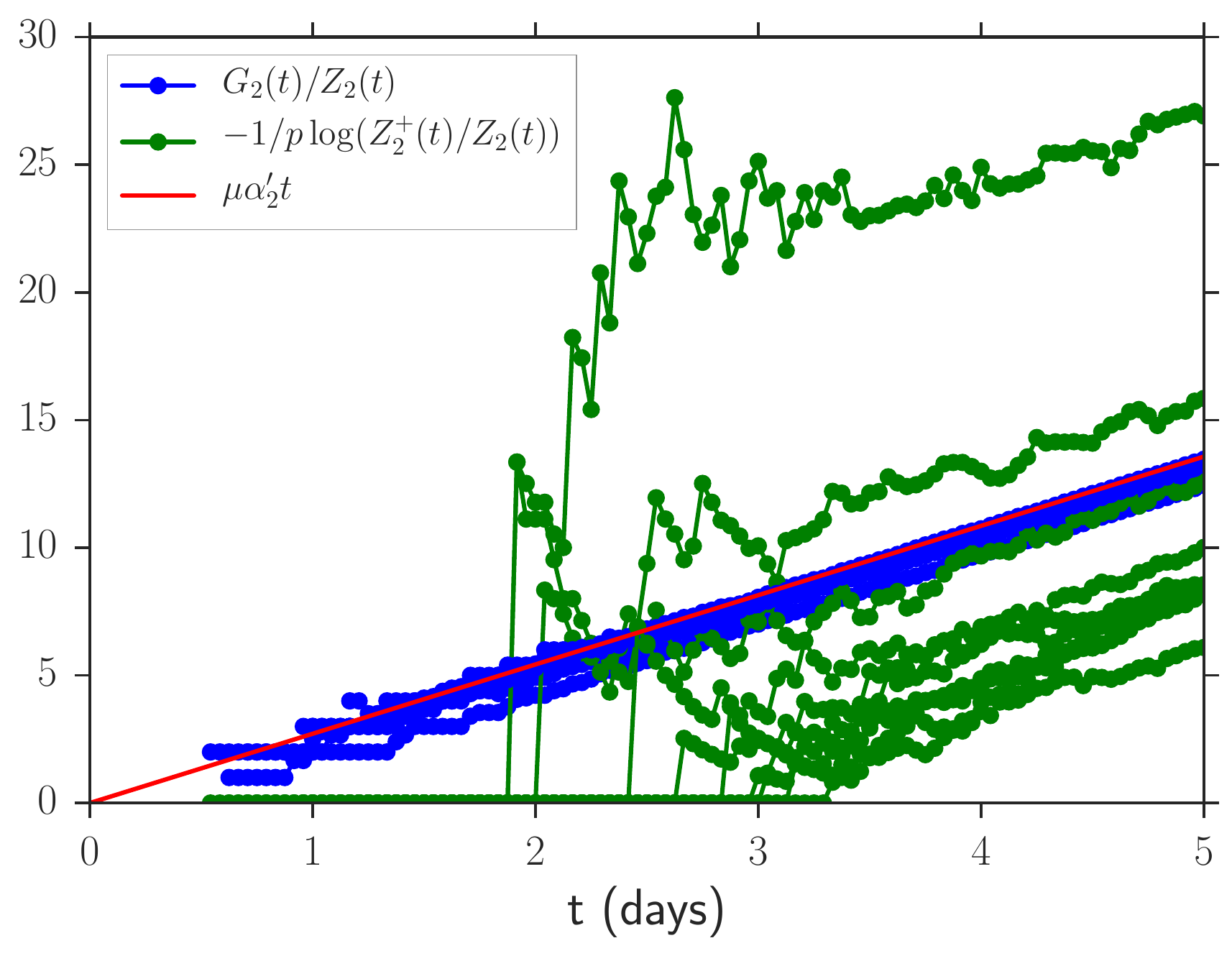}
}
\subfigure[\label{fig:multitype_estimator_paths_b}$\alpha_1<\alpha_2$, $Z_1(0)=100$]{
\includegraphics[scale=0.4]{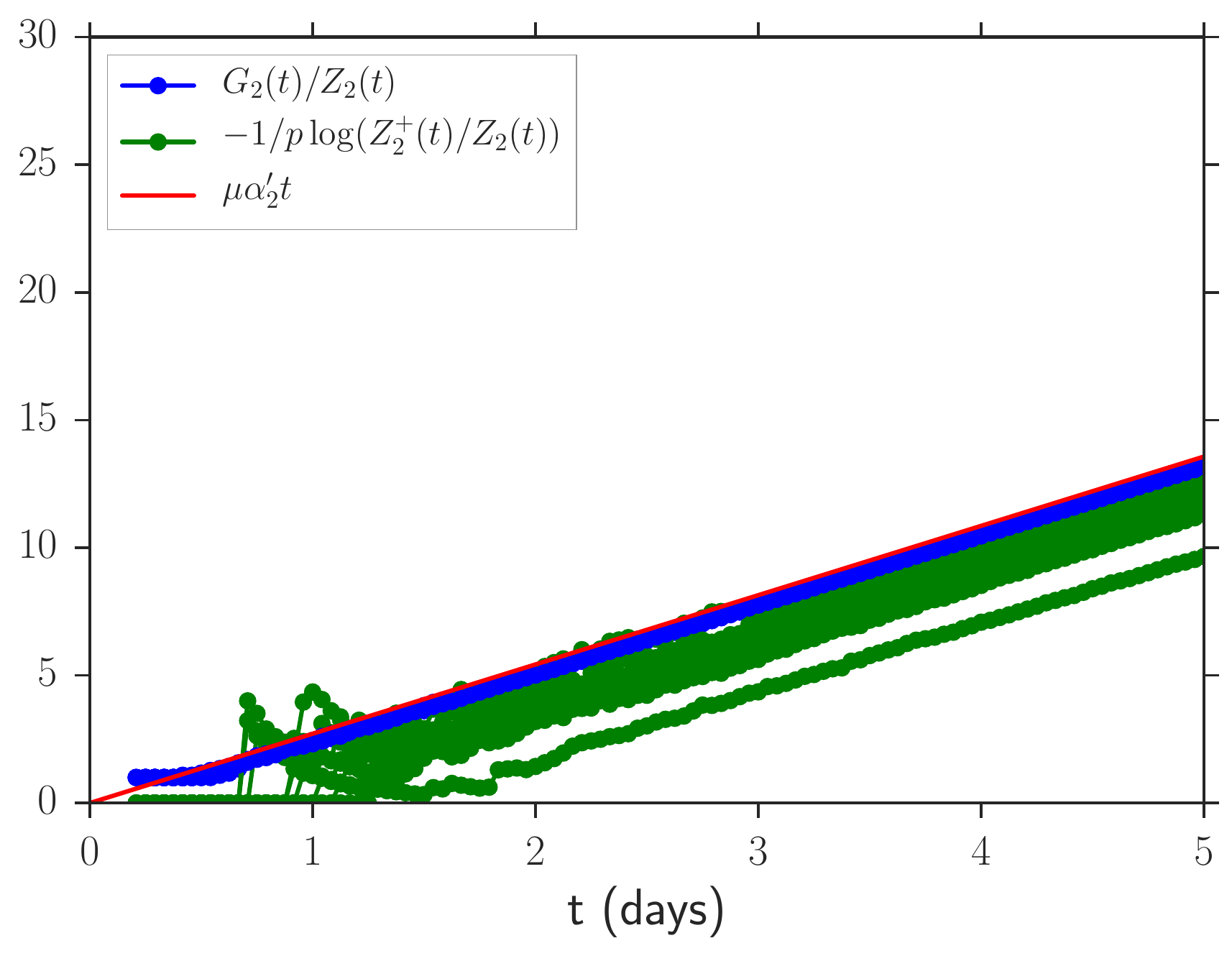}
}\\
\subfigure[\label{fig:multitype_estimator_paths_c} $\alpha_2<\alpha_1$, $Z_1(0)=1$]{
\includegraphics[scale=0.4]{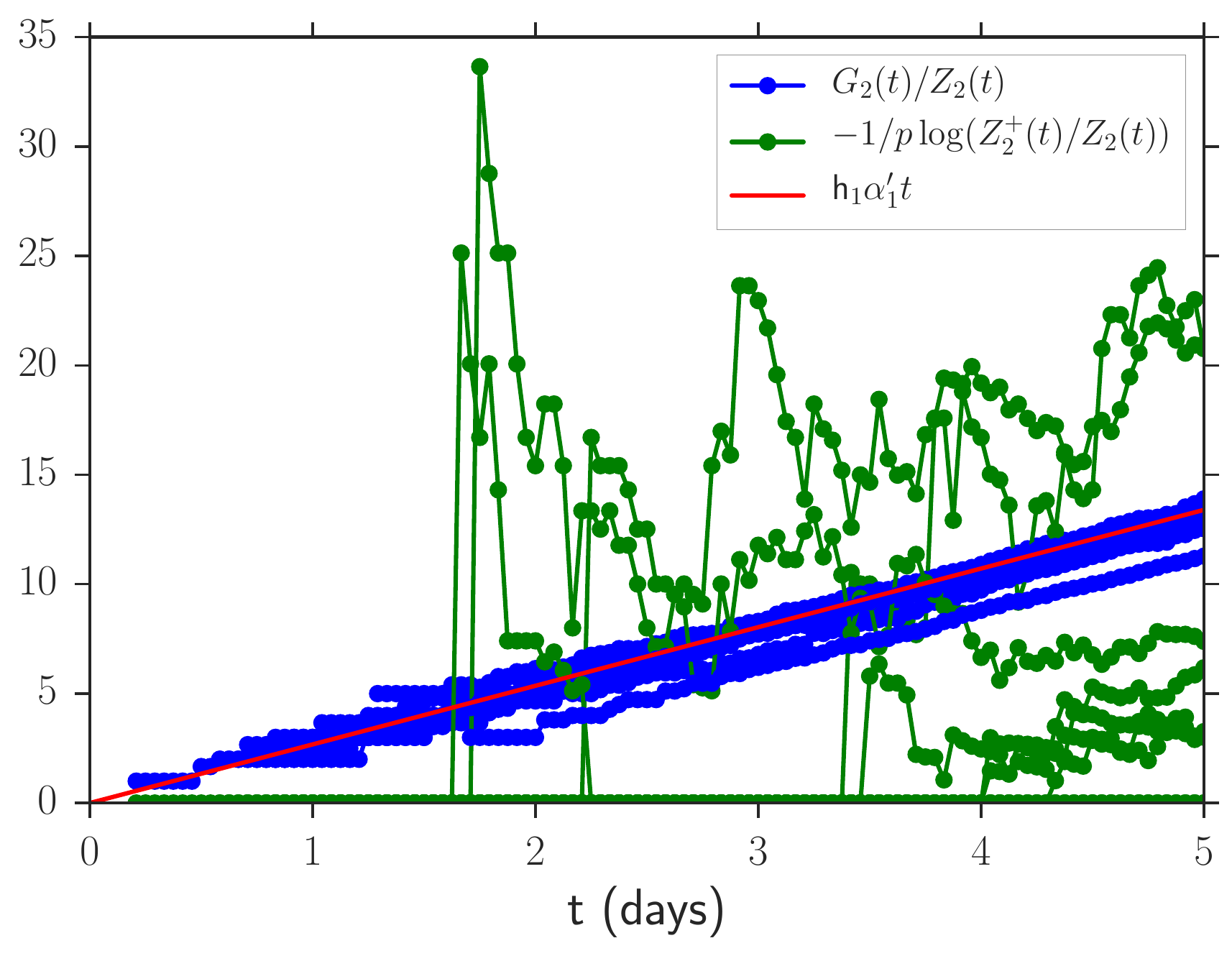}
}
\subfigure[\label{fig:multitype_estimator_paths_d}$\alpha_2<\alpha_1$, $Z_1(0)=100$]{
\includegraphics[scale=0.4]{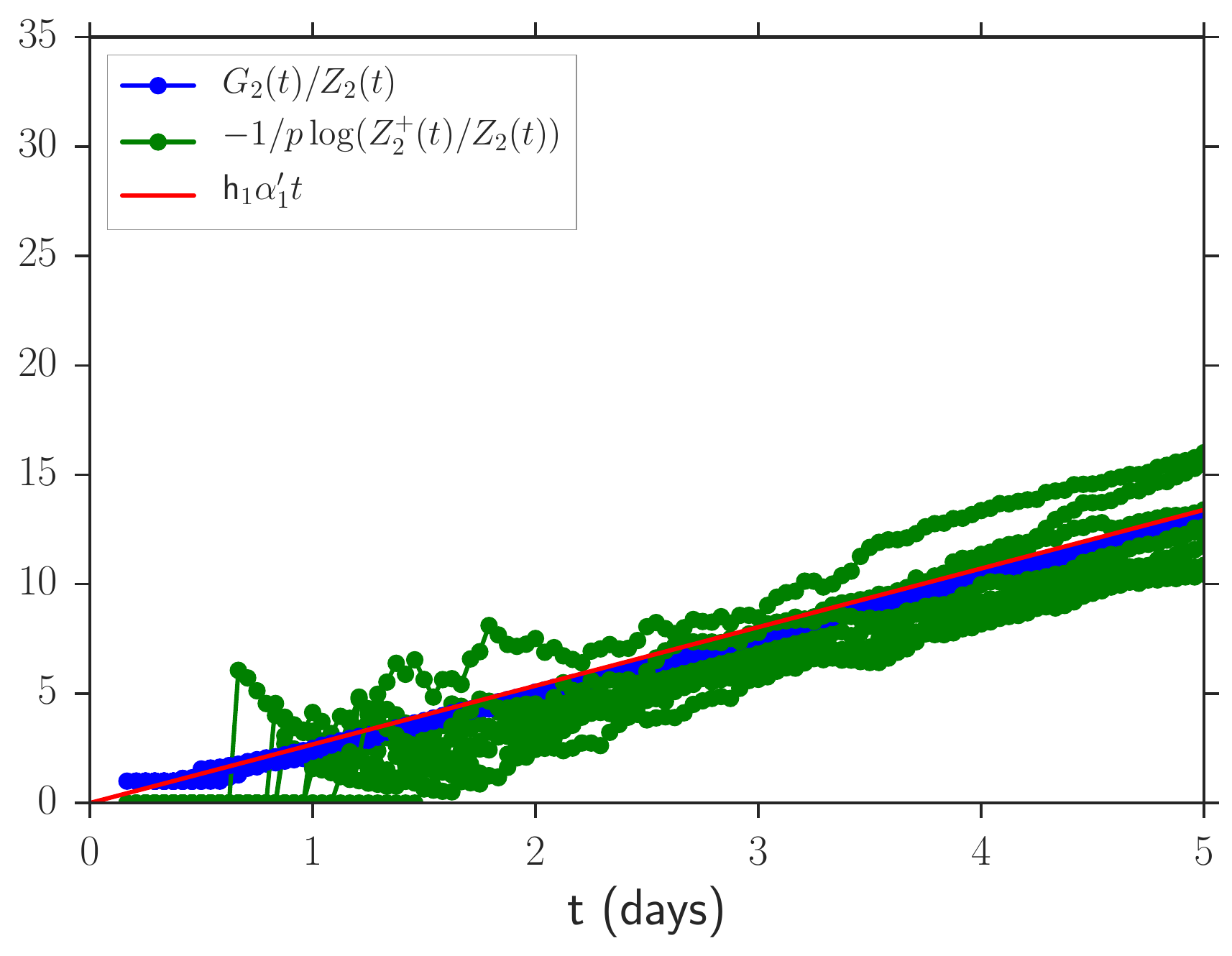}
}
\caption{{\bf Sample-path estimation of average generation.} 
For each sub-panel, ten Monte Carlo simulations of a two-type
population are presented. These employ the same parameterisation in
Fig.~\ref{fig:multitype_Zi_Gi}, with the exception of the initial
population size in the two right hand side panels. Each initial
cell is equipped with a neutral label that doesn't alter population
dynamics, and which is lost irrevocably to all subsequent offspring
with probability $p=10^{-2}$ per cell division. The red line indicates
the theoretical prediction of the mean average generation. Blue
lines indicate the development of the per-path average generation,
while the green lines are the estimates from the delabelling
formula~\eqref{eq:theformula}.
(a-b) Plots are in the setting $\alpha_1<\alpha_2$ case, but start
with one and 100 type-1 cells at $t=0$, respectively.
(c-d) Equivalent of (a-b) but with $\alpha_2<\alpha_1$. 
}\label{fig:multitype_estimator_paths}
\end{figure}

Part of the significance of Propositions~\ref{prop:multitype_alpha1<alpha2}
and~\ref{prop:multitype_alpha2<alpha1} is that they provide an
instrument by which one can infer the average generation of each
of the populations in a two-type Bellman-Harris branching
process, generalising the results in~\cite[Proposition 2]{weber2016inferring}. In
the presence of cells equipped with a neutral label that is heritably
lost with a fixed probability at each division, the average generation
and a function of the proportion of label-positive cells of each
type share the same dominant term. The mathematical results say
that the slope of the average generation and the slope of the
estimator are the same when the probabilistic regularity of a large
population takes hold. Figs~\ref{fig:multitype_estimator_paths_a}
and~\ref{fig:multitype_estimator_paths_c} illustrate this relationship
for the type-2 population via the use of some Monte Carlo simulations
in the presence of a single initial label positive cell of type-1.
In this setting the large population regularlity only takes hold at later times.
Starting with more than one initially labelled cell, illustrated
with $100$ in Figs~\ref{fig:multitype_estimator_paths_b}
and~\ref{fig:multitype_estimator_paths_d}, results in the desired
asymptotic equivalence occuring at a much earlier time. 
For true cellular systems, the cell numbers are likely to be significantly
larger again.

{\bf Acknowledgments:}
This work was supported by Science Foundation Ireland grant 12 IP 1263.

\end{document}